\newcommand{\R}{\mathbb{R}}
\newcommand{\Z}{\mathbb{Z}}
\newcommand{\ol}{\overline}
\newcommand{\cH}{{\mathcal H}}
\newcommand{\M}{\mathbf{M}}
\newcommand{\F}{\mathcal{F}}
\newcommand{\lebmeas}{{\mathcal{L}}}
\newtheorem{thm}{Theorem}
\newtheorem{lemma}[thm]{Lemma}
\newtheorem{prop}[thm]{Proposition}
\newtheorem{ex}[thm]{Example}
\theoremstyle{definition}
\newtheorem{definition}[thm]{Definition}
\newtheorem{assum}[thm]{Assumptions}
\newtheorem{remark}[thm]{Remark}
\newtheorem*{ack}{Acknowledgments}
\newtheorem*{outline}{Outline}
\DeclareMathOperator{\capac}{cap}
\DeclareMathOperator{\spt}{spt}
\DeclareMathOperator{\vol}{vol}
\DeclareMathOperator{\diam}{diam}
\DeclareMathOperator{\set}{set}
\DeclareMathOperator{\Lip}{Lip}
\DeclareMathOperator{\Lipb}{Lip^b}
\DeclareMathOperator{\LipB}{Lip_B}
\DeclareMathOperator{\Liploc}{Lip_{loc}}
\DeclareMathOperator{\Tan}{Tan^{(n)}}
\newcommand{\Mlocm}{\mathbf{M}_{\text{loc}, m}}
\newcommand{\Mm}{\mathbf{M}_{m}}
\newcommand{\Mlocmone}{\mathbf{M}_{\text{loc}, m-1}}
\newcommand{\Ilocm}{\mathbf{I}_{\text{loc}, m}}
\newcommand{\Ilocmplusone}{\mathbf{I}_{\text{loc}, m+1}}
\renewcommand{\Im}{\mathbf{I}_{m}}
\newcommand{\In}{\mathbf{I}_{n}}
\newcommand{\Imone}{\mathbf{I}_{m+1}}
\newcommand{\scrIlocm}{\mathcal{I}_{\text{loc},m}}
\newcommand{\scrIlocmone}{\mathcal{I}_{\text{loc},m-1}}
\newcommand{\VF}{\mathcal{VF}}
\newcommand{\toF}{\xrightarrow{\F}}
\newcommand{\toVF}{\xrightarrow{\VF}}
\begin{document}

\title{Semicontinuity of capacity under pointed intrinsic flat convergence}
\author{Jeffrey L. Jauregui}
\address{Dept. of Mathematics,
Union College, 807 Union St.,
Schenectady, NY 12308,
United States}
\email{jaureguj@union.edu}
\author{Raquel Perales}
\address{CONACyT Research Fellow at the Math Institute of the 
National Autonomous University of Mexico, Oaxaca. Mexico}
\email{raquel.perales@im.unam.mx}
\author{Jacobus W. Portegies}
\address{Dept. of Mathematics and Computer Science,
Eindhoven University of Technology,
De Zaale, Eindhoven,
The Netherlands}
\email{j.w.portegies@tue.nl}
\date{\today}

\begin{abstract}
The concept of the capacity of a compact set in $\R^n$ generalizes readily to noncompact Riemannian manifolds and, with more substantial work, to metric spaces (where multiple natural definitions of capacity are possible). Motivated by analytic and geometric considerations, and in particular
Jauregui's definition of capacity-volume mass and Jauregui and Lee's results on the lower semicontinuity of the ADM mass and Huisken's isoperimetric mass,
we investigate how the capacity functional behaves when the background spaces vary.  Specifically, we allow the background spaces to consist of a sequence of local integral current spaces converging in the pointed Sormani--Wenger intrinsic flat sense. For the case of volume-preserving ($\VF$) convergence, we prove two theorems that demonstrate an upper semicontinuity phenomenon for the capacity: one version is for balls of a fixed radius centered about converging points; the other is for Lipschitz sublevel sets. Our approach is motivated by Portegies' investigation of the semicontinuity of eigenvalues under $\VF$ convergence. We include examples to show the semicontinuity may be strict, and that the volume-preserving hypothesis is necessary. Finally, there is a discussion on how capacity and our results may be used towards understanding the general relativistic total mass in non-smooth settings.
\end{abstract}

\maketitle 

\section{Introduction}
The capacity of a compact set $K \subset \R^n$, $n \geq 3$, is defined as:
$$\capac(K) = \inf_{\phi} \left\{\frac{1}{(n-2)\omega_{n-1}}  \int_{\R^n} |\nabla \phi|^2 dV\; :\; \phi \text{ is Lipschitz with compact support, and } \phi \equiv 1 \text{ on } K\right\},$$
where $\omega_{n-1}$ is the hypersurface area of the unit $(n-1)$-sphere. This notion is also called the harmonic, electrostatic, or Newtonian capacity.
If $\partial K$ is sufficiently regular (e.g., a $C^1$ hypersurface), then there exists a unique harmonic function $u$ on $\R^n \setminus K$, equalling 1 on $\partial K$ and approaching 0 at infinity, such that
$$\capac(K) =  \frac{1}{(n-2)\omega_{n-1}} \int_{\R^n \setminus K} |\nabla u|^2 dV = - \frac{1}{(n-2)\omega_{n-1}} \int_{S} \frac{\partial u}{\partial \nu} dA,$$
for any surface $S$ enclosing $\partial K$. For example, a ball of radius $r$ has capacity equal to $r^{n-2}$. Capacity is monotone under set inclusion and enjoys nice measure-theoretic properties, such as inner and outer regularity \cite{EG}. Geometrically, it can be bounded below by the volume radius $\left(\frac{n\vol(K)}{\omega_{n-1}}\right)^{\frac{1}{n}}$ of $K$ (due to Poincar\'e--Faber--Szeg\"o) and, if $\partial K$ is convex, bounded above in terms of the total mean curvature of $\partial K$ (due to Szeg\"o) \cite{PS}.

Capacity also makes sense with an analogous definition in complete Riemannian manifolds, such as asymptotically flat manifolds. (Without some control on the asymptotics, however, the capacity could be zero for every compact set.) It is natural to ask how capacity behaves along a converging sequence of Riemannian manifolds. For example, it is not difficult to show that if $M$ is a smooth manifold equipped with a sequence of complete Riemannian metrics $\{g_i\}$, and $g_i$ converges uniformly (i.e, in $C^0$) on compact sets to a Riemannian metric $g$, then for any compact set $K \subset M$,
$$\limsup_{i \to \infty} \capac_{g_i}(K) \leq \capac_g(K).$$
More generally, an analogous statement holds for a sequence of complete Riemannian manifolds converging in the pointed $C^0$ Cheeger--Gromov sense (see Appendix A). In fact, strict inequality can hold (see Section \ref{sec_examples}). 
This upper semicontinuity of capacity contrasts sharply with two other natural notions of the ``size'' of $K$, the volume and perimeter, which are of course continuous under $C^0$ convergence of the background metrics. This different behavior of capacity in the case of $C^0$ convergence springs from its non-local nature, specifically its dependence on the geometry ``at infinity.''

The aim of this paper is to study capacity in lower regularity background spaces and in particular to analyze its behavior under lower regularity convergence. While capacity in lower regularity (such as in metric measure spaces) has already received significant attention in the analysis literature (see below), we believe the study of its continuity is novel. We are also interested in capacity and its continuity properties for geometric reasons. For example a recent paper by Jauregui \cite{Jau} suggests a definition of total mass in general relativity for asymptotically flat 3-manifolds that is based on the capacity--volume inequality, generalizes the well-known ADM mass, and is inspired by Huisken's isoperimetric mass \cites{Hui1,Hui2}. Several important open problems in general relativity related to the total mass seem to naturally involve Sormani--Wenger intrinsic flat (``$\F$'') limits (we refer the reader to \cite{Sor2} and \cite{JL}, for example), so the behavior of capacity under such convergence is of interest. For instance, Jauregui and Lee showed lower semicontinuity of Huisken's isoperimetric mass \cites{Hui1,Hui2} under pointed $\VF$-convergence 
as in 
\cite{JL}, cf. 
 Definition 
 \ref{def_pointed_F}
 (where ``$\VF$'' refers to volume-preserving intrinsic flat convergence). The definition of mass in \cite{Jau} involves capacity with a negative sign, so the upper semicontinuity we prove here is supportive of lower semicontinuity of that mass. We continue this discussion in Section \ref{sec_mass}.

Our approach to establishing the upper semicontinuity of capacity is inspired by Portegies' proof that certain min-max values of the Laplacian on a compact Riemannian manifold $M$ are upper semicontinuous under $\VF$ convergence \cite{Por}. Recall that, for example, the first such eigenvalue,
$$\lambda_1 = \inf_{f \in C^{\infty}(M)} \left\{\int_M |\nabla f|^2 dV \; : \; \int_M f^2 dV=1, \int_M fdV = 0 \right\},$$
varies continuously with respect to $C^2$ convergence of  Riemannian metrics, but may only be upper semicontinuous for weaker types of convergence such as measured Gromov--Hausdorff convergence \cite{Fuk}. Since capacity is only interesting in noncompact spaces, we will specifically study the behavior of capacity under \emph{pointed} $\VF$-convergence.

 In Section \ref{sec_examples}, we provide examples to illustrate that there are essentially two distinct reasons for the capacity to jump under a $\VF$-limit, and both jumps ``go the same way.'' One reason is non-uniform control at infinity, which may be seen even under smooth convergence; the other is an effect of the relatively coarse nature of $\VF$ convergence.

Below we state one of our main theorems, that the capacity of closed balls of a fixed radius about converging points in a sequence of converging spaces cannot jump down in a limit. The natural setting for intrinsic flat convergence is the integral current spaces of Sormani and Wenger \cite{SW}, which are constructed using the integral currents on metric spaces of Ambrosio and Kirchheim \cite{AK_cur}. We will use local versions of these spaces (building on Lang and Wenger's locally integral currents \cite{LW}) and pointed convergence, and make use of the definition of Dirichlet energy in these spaces appearing in \cite{Por} to define capacity. The relevant definitions will be given in Section \ref{sec_background}.

\begin{thm}
\label{thm_balls_intro}
Let $N_i = (X_i, d_i, T_i)$ and $N_\infty = (X_\infty, d_\infty, T_\infty)$ be local integral current spaces of dimension $m \geq 2$, such that $N_i \to N_\infty$ in the pointed volume-preserving intrinsic flat sense with respect to $p_i \in X_i$ and $p_\infty \in  X_\infty$. Suppose the closed ball $\ol B(p_\infty,r')$ in $X$ is compact for some $r'>0$. Then for all $0<r<r'$,
\begin{equation}
\label{eqn_limsup_balls_intro}
\limsup_{i \to \infty} \capac_{N_i} (\ol B(p_i,r)) \leq \capac_{N_\infty}(\ol B(p_\infty,r)).
\end{equation}
\end{thm}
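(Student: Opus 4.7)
The strategy is a test-function transfer argument in the style of Portegies' proof of upper semicontinuity of Laplacian eigenvalues under $\VF$-convergence \cite{Por}. For any $\varepsilon > 0$, I would select a near-optimal admissible test function $\phi\colon X_\infty \to [0,1]$ for $\capac_{N_\infty}(\ol B(p_\infty,r))$---compactly supported, equal to $1$ on $\ol B(p_\infty,r)$, with Dirichlet energy within $\varepsilon$ of $\capac_{N_\infty}(\ol B(p_\infty,r))$---and then transfer $\phi$ to test functions $\phi_i$ on each $X_i$ that are admissible for $\capac_{N_i}(\ol B(p_i,r))$ and whose energies pass to the limit.

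Using the definition of pointed $\VF$-convergence, I can work inside a common complete metric space $Z$ equipped with isometric embeddings $\iota_i\colon X_i \hookrightarrow Z$ and $\iota_\infty\colon X_\infty \hookrightarrow Z$ such that $\iota_i(p_i)\to\iota_\infty(p_\infty)$, the pushed-forward currents converge in local flat norm, and masses of balls converge. I would form the maximal McShane extension $\Phi(y) := \sup_{x \in X_\infty}\bigl(\phi(x) - L\, d_Z(y,\iota_\infty(x))\bigr)$, where $L = \Lip(\phi)$. Then $\Phi\colon Z\to[0,1]$ is $L$-Lipschitz, vanishes outside a bounded set, agrees with $\phi$ on $\iota_\infty(X_\infty)$, and satisfies the lower bound $\Phi(y) \geq 1 - L\, d_Z\bigl(y,\iota_\infty(\ol B(p_\infty,r))\bigr)$. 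Composing with the piecewise-linear truncation $\psi_\eta(t) := \min\{1,\, t/(1-\eta)\}$ for small $\eta>0$, I set $\phi_i := \psi_\eta \circ \Phi \circ \iota_i\colon X_i \to [0,1]$, which is $L/(1-\eta)$-Lipschitz with bounded support.

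The main obstacle is verifying that $\phi_i \equiv 1$ on $\ol B(p_i,r)$, which reduces to the one-sided Hausdorff statement: for every $\sigma > 0$ and all sufficiently large $i$,
$$\iota_i\bigl(\ol B(p_i,r)\bigr) \;\subset\; \bigl\{\, y \in Z : d_Z\bigl(y, \iota_\infty(\ol B(p_\infty,r+\sigma))\bigr) < \sigma \,\bigr\}.$$
This is where the volume-preserving hypothesis is essential beyond mere $\F$-convergence: any point $x_i \in X_i \cap \ol B(p_i,r)$ at a fixed positive distance from $\iota_\infty(X_\infty)$ in $Z$ would, by the $m$-dimensional density lower bound for integral currents \cite{AK_cur}, trap a uniform amount of $\|T_i\|$-mass in a ball disjoint from $\spt(\iota_\infty)_\# T_\infty$, contradicting the mass convergence guaranteed by $\VF$. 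Choosing $\sigma < \eta/L$ then places $\iota_i(\ol B(p_i,r))$ inside $\{\Phi \geq 1-\eta\}$, so $\phi_i \equiv 1$ on $\ol B(p_i,r)$ as required.

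It remains to estimate the energies. The $\VF$-compatible convergence of Dirichlet energies for Lipschitz functions developed in \cite{Por} yields
$$\int_{X_i} |\nabla \phi_i|^2\, d\|T_i\| \;\longrightarrow\; \int_{X_\infty} |\nabla(\psi_\eta \circ \phi)|^2\, d\|T_\infty\| \;\leq\; (1-\eta)^{-2}\bigl(\capac_{N_\infty}(\ol B(p_\infty,r)) + \varepsilon\bigr),$$
where the final inequality uses $|\nabla(\psi_\eta \circ \phi)| \leq |\nabla \phi|/(1-\eta)$ a.e. Taking first $\limsup_i$, then $\eta \to 0$, and finally $\varepsilon \to 0$ gives \eqref{eqn_limsup_balls_intro}. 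Two minor technicalities still require care: verifying compactness (not merely boundedness) of $\spt \phi_i$ inside $X_i$, and controlling the pointwise Lipschitz slope of $\Phi$ at points of $\iota_i(X_i) \setminus \iota_\infty(X_\infty)$ by the slope of $\phi$ at nearby points of $X_\infty$. Both are handled via the same Hausdorff convergence used to verify admissibility.
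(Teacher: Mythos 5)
Your transfer-of-test-function strategy is the right high-level approach, and your handling of admissibility (via a density lower bound and one-sided Hausdorff containment under $\VF$) is in the spirit of the tools the paper itself uses (cf.\ the use of Theorem 2.9 of \cite{HLP} in the proof of Theorem \ref{thm_sublevel}). However, the key step — the energy estimate — contains a genuine gap that is essentially the entire technical content of the paper's proof.

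You assert that ``the $\mathcal{VF}$-compatible convergence of Dirichlet energies for Lipschitz functions developed in \cite{Por} yields $\int_{X_i}|\nabla\phi_i|^2\,d\|T_i\| \to \int_{X_\infty}|\nabla(\psi_\eta\circ\phi)|^2\,d\|T_\infty\|$.'' No such convergence theorem exists in \cite{Por}, and none is available here. What $\mathcal{VF}$-convergence gives you directly (via the portmanteau-type Lemma \ref{lemma_mass_convergence}) is $\limsup_i\|T_i\|(C)\le\|T\|(C)$ for closed bounded $C$ — control of the mass measures only. It says nothing about how the tangential differential $d_x^{S_i}\Phi$ compares to $d_x^{S_\infty}\phi$. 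With your global McShane extension $\Phi$, the only available pointwise bound is $|d_x^{S_i}\Phi|\le\Lip(\Phi)=\Lip(\phi)$, which yields
\[
\limsup_i E_{N_i}(\phi_i)\ \le\ \Lip(\phi)^2\,\|T\|(\spt\phi),
\]
not $E_{N_\infty}(\phi)$. On a large region where $|d_x^{S_\infty}\phi|$ is much smaller than $\Lip(\phi)$, this overestimates badly. The difficulty is that the tangent planes $\Tan(S_i,\cdot)$ and $\Tan(S_\infty,\cdot)$ are different objects, and flat convergence plus mass convergence do not immediately control the norm of the restriction of a fixed ambient Lipschitz function to each.

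The paper (following Portegies' Theorem~6.2) circumvents this with a localization of the Lipschitz constant. It decomposes $S_\infty$ (minus small tubular neighborhoods of $K$) into finitely many compact bi-Lipschitz pieces $A_\ell$ on which $|d_x^{S_\infty}f|\approx c_\ell:=\Lip(f|_{A_\ell})$ up to $\epsilon_3$, then extends $f|_{A_\ell}$ to the $b/10$-neighborhood $V_\ell$ with Lipschitz constant only $c_\ell$ (not $\Lip(f)$), and patches these with bounded cross-terms. Then on each $V_\ell$, the restriction $f_i$ to $S_i$ satisfies $|d_x^{S_i}f_i|\le c_\ell$, so
\[
\limsup_i\sum_\ell c_\ell^2\,\|T_i\|(V_\ell)\ \le\ \sum_\ell c_\ell^2\,\|T\|(\ol V_\ell)\ \approx\ \sum_\ell c_\ell^2\,\|T\|(A_\ell)\ \approx\ E_{N_\infty}(f),
\]
where the first inequality uses the mass upper semicontinuity and the approximations use $\mathcal{VF}$ regularity of $\|T\|$. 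This piecewise construction, captured in Lemmas \ref{lem:setsAandV} and \ref{lemma_extension} and the extrinsic Theorem \ref{thm_extrinsic}, is what your McShane extension misses; without it the energy estimate does not close.

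A secondary caution: your ``volume radius'' density argument for admissibility is plausible but should not be left as a one-line appeal; the paper instead uses the extended defining function $U(\cdot)=d_Y(\varphi(K),\cdot)$ and shows $\ol B(p_i,r)\subseteq\varphi_i^{-1}(K_i)$ where $K_i=\ol B_Y(\varphi(p),r+\alpha_i)\cap\varphi_i(S_i)$ and $\alpha_i=d_Y(\varphi_i(p_i),\varphi(p))\to0$, sidestepping the need for a density bound there.
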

The other main theorem, Theorem \ref{thm_sublevel}, is a version of Theorem \ref{thm_balls_intro} that replaces closed balls with sublevel sets of Lipschitz functions. Both of these theorems will be proved using the technical result Theorem \ref{thm_extrinsic}, an extrinsic version of the theorems which itself is inspired by Theorem 6.2 in \cite{Por}. 

Note that integral current spaces are metric spaces $(X,d)$ with a current structure, $T$, that provides a countable collection of bi-Lipschitz charts with integer weights as well as a notion of boundary which also has bi-Lipschitz charts with integer weights. The mass measure of $T$, denoted $\|T\|$, allows one to view an integral current space as a metric measure space $(X,d,\|T\|)$. See Section \ref{sec_background} for details. Note that convergence of integral current spaces in the $\VF$ sense implies convergence of the underlying metric measures spaces; see Lemma \ref{lemma_mass_convergence}.

We note that there is a body of literature on  capacities of sets in metric measure spaces. While we make no attempt at a comprehensive account, we discuss this partially here, referring the reader to the books by Bj\"orn and Bj\"orn \cite{BB} and  Heinonen, Koskela, Shanmugalingam, and Tyson \cite{HKST} for example. One approach  begins with the definition of Sobolev functions on a metric measure space, due to Haj\l asz \cite{Haj}, based on the Hardy--Littlewood maximal operator. Kinnunen and Martio used Haj\l asz's definition to develop a Sobolev $p$-capacity \cite{KM}. (Sobolev $p$-capacity, in contrast to the capacity we consider here, is found by minimizing the Sobolev norm, i.e., it includes the $L^p$ norm of the test functions.) A different approach is to consider, instead of Haj\l asz's Sobolev spaces, the Sobolev spaces defined by Shanmugalingam \cite{Sha}, called Newtonian spaces, an approach to Sobolev spaces using weak upper gradients. This is explored in detail in \cite{BB}, where the version of capacity based only on the $L^p$ norm of the weak upper gradient (as we consider in this paper) is called the variational capacity. Since the norm of the tangential differential is $\|T\|$-almost-everywhere equal to the minimal relaxed gradient (see Theorem \ref{thm_df} in Appendix B, cf. \cite[Theorem 5.2]{Por}), this latter capacity is equivalent to that which we use in this paper.
We continue this discussion in Appendix B and also refer the reader to \cite{GT} for additional discussion on capacities in metric measure spaces.

\begin{outline}
Section \ref{sec_background} covers the essential background material, including Ambrosio--Kirchheim currents on metric spaces, flat convergence, integral current spaces, and Sormani--Wenger intrinsic flat convergence, before moving on to local integral current spaces and pointed $\F$- and $\VF$-convergence. We also recall the definition of the Dirichlet energy of a Lipschitz function using the tangential differential (where some details are deferred to Appendix B), and use that to define the capacity in a local integral current space. The main results are presented and proved in Section \ref{sec_main}, and several examples are given in Section \ref{sec_examples} to demonstrate how capacity may ``jump up'' and show that  volume-preserving convergence is necessary. Section \ref{sec_mass} includes further discussion regarding how capacity on integral current spaces may be of interest for problems involving mass in general relativity.
\end{outline}

\begin{ack}
R. P. acknowledges support from CONACyT Ciencia de Frontera 2019  CF217392 grant. The authors would like to thank the referees for their careful read of the paper and their very helpful comments.
\end{ack}

\section{Definitions and basic objects}
\label{sec_background}

In 1960 Federer--Fleming \cite{FeFle} used de Rham's $m$-dimensional currents on $\R^n$  \cite{deRham}, extending the notion of $m$-dimensional submanifolds, to find limits of sequences of such submanifolds. Currents are functionals on differential forms $f d\pi_1 \wedge \dots \wedge d\pi_m$, and an embedded submanifold $\varphi: M^m \to \R^n$ can be viewed as a current as follows:
\begin{equation}
\label{current_integration}
\varphi_{\#}[[M]](f d\pi_1 \wedge \dots \wedge d\pi_m) = \int_{\varphi(M)}f d\pi_1 \wedge \dots \wedge d\pi_m.
\end{equation}
Federer--Fleming defined weak convergence of currents and extended Whitney's notion of a flat norm \cite{Wh57} to currents. 
They also defined rectifiable, integral, and normal currents. In 2000, Ambrosio--Kirchheim defined an $m$-dimensional current on a complete metric space $Z$ \cite{AK_cur} using de Giorgi's $(m+1)$-tuples of Lipschitz functions $(f,\pi_1, \dots, \pi_m)$ from \cite{deGiorgi}, viewing a submanifold $\varphi:M^m \to Z$ as a current acting on tuples as follows:
$$\varphi_{\#}[[M]](f, \pi_1, \dots, \pi_m) = \int_M (f \circ \varphi) d(\pi_1 \circ \varphi) \wedge \dots \wedge d(\pi_m \circ \varphi).$$
Wenger defined and studied the flat distance $d_F^Z$ between currents in $Z$ \cite{We2007}.

It is of interest in geometric analysis to study sequences of distinct Riemannian manifolds and metric spaces that do not live in a common extrinsic space. The well-known Gromov--Hausdorff distance between compact metric spaces is defined by
$$d_{GH} ((X_1, d_1), (X_2,d_2)) = \inf d_H^Z \Big(\phi_1(X_1), \phi_2(X_2)\Big),$$
where the infimum is taken over all metric spaces $Z$ and all distance-preserving maps $\phi_i:X_i \to Z$ of the Hausdorff distance $d_H^Z$. In 2011 Sormani--Wenger imitated this idea to define the intrinsic flat distance, applying Ambrosio--Kirchheim theory \cite{AK_cur} and Wenger's flat distance $d_F^Z$ between currents in $Z$ \cite{We2007}. They extended the notion of a compact oriented Riemannian manifold $(M^m,g)$ to an integral current space $(M,d_g,[[M]])$, where $[[M]]$ represents the current that ``integrates'' $(m+1)$-tuples of Lipschitz functions over $M$, and defined the intrinsic flat distance between integral current spaces $(X_i, d_i, T_i)$, $i=1,2$, of the same dimension as 
$$d_{\F}((X_1,d_1,T_1), (X_2, d_2, T_2)) = \inf d_F^Z \Big(\phi_{1\#} (T_1), \phi_{2\#} (T_2)\Big)$$
with the infimum taken over all complete metric spaces $Z$ and all distance-preserving maps $\phi_i:X_i \to Z$. In general an integral current space $(X,d,T)$ is a rectifiable metric space with integer weights and integer-weighted boundaries, endowed with an integral current structure, $T$, defined by the bi-Lipschitz charts of its rectifiable structure, that acts on tuples as in \cite{AK_cur}. The precise definition is recalled in Definition \ref{def_ICS}.

Our primary goal is to study the capacity of compact sets in an integral current space, and in particular the continuity behavior of capacity. We certainly want the theory to include ambient spaces of infinite mass (volume), such as $\R^n$, yet Sormani--Wenger integral current spaces (built using Ambrosio--Kirchheim currents) by definition have finite mass. To work around this, we will use Lang--Wenger's extension \cite{LW} of Ambrosio--Kirchheim \cite{AK_cur} integral currents on metric spaces to so-called \emph{locally integral currents}, then generalize Sormani--Wenger's definition of integral current space accordingly. We recall the details of locally integral currents in Section \ref{sec_currents}. In Section \ref{sec_flat_weak}, we recall local flat, weak, and flat convergence. Next, in Section \ref{sec_local_currents}, we recall the details of  Sormani--Wenger intrinsic flat convergence and its pointed version. Finally, in Section \ref{sec_dirichlet}, we give the definition of capacity in local integral current spaces. This definition involves a differential of Lipschitz functions on a metric space, and there are a number of ideas from \cite{AK_rect} that will be recalled in Appendix B. This approach is inspired by the work of Portegies on studying the behavior of eigenvalues of the Laplacian under $\VF$-convergence \cite{Por}.

\subsection{Locally integral currents}
\label{sec_currents}

The goal of this section is to arrive at the definition of a locally integral current. Below we are essentially summarizing the parts of \cite{LW} that will get us to that point, including only the minimal details, with no proofs. 

First, we recall metric functionals and how they produce an outer measure. Currents of locally finite mass will be those metric functionals whose measure behaves well from the inside and outside. Locally integer rectifiable currents are defined next, and then finally locally integral currents.

Given a metric space $Z$, define:
\begin{itemize}
\item $\Lip(Z)$ as the vector space of Lipschitz functions $Z \to \R$,
\item $\Liploc(Z)$ as the vector space of functions $Z \to \R$ that are Lipschitz on bounded sets,
\item $\Lipb(Z)$ as the vector space of Lipschitz functions $Z \to \R$ that are bounded, and
\item $\LipB(Z)$ as the vector space of Lipschitz functions $Z \to \R$ that are bounded with bounded support.
\end{itemize}
The Lipschitz constant of a function $f:Z \to \R$ will be denoted by $\Lip(f)$. 

For an integer $m \geq 0$, an $m$-dimensional metric functional $T$ will act on $(m+1)$-tuples of functions in $\LipB(Z) \times \left[\Liploc(Z)\right]^m$ and produce a real number. Such an $m$-tuple $(f, \pi_1, \ldots, \pi_m)$ (sometimes denoted more briefly by $(f,\pi)$) should be conceptually thought of as the differential form ``$f d\pi_1 \wedge \dots \wedge d\pi_m$'', so metric functionals will generalize the idea of currents. The precise definition of $T$ being a \emph{metric functional} is that:
\begin{enumerate}
\item[(i)] $T$ is multilinear.
\item[(ii)] (continuity) Suppose $f \in \LipB(Z)$ and $(\pi_1, \ldots, \pi_m) \in \Liploc(Z)^m$, and that we have $m$ sequences $\{\pi_i^j\}_{j=1}^\infty$ in $\Liploc(Z)$ such that $\pi_i^j \to \pi_i$ pointwise as $j \to \infty$ for each $i=1,\ldots, m$, and the Lipschitz constants of $\pi_i^j$ are uniformly bounded in $j$ on any bounded subset of $Z$. Then
$$\lim_{j \to \infty} T(f, \pi_1^j, \ldots, \pi_m^j) = T(f,\pi_1, \ldots, \pi_m).$$

\item[(iii)] (locality) Consider $(f, \pi_1, \ldots, \pi_m) \in \LipB(Z) \times \left[\Liploc(Z)\right]^m$. Suppose that one of the $\pi_i$ is constant on the $\delta$-neighborhood of $\spt(f)$ for some $\delta > 0$. Then
$$T(f, \pi_1, \ldots, \pi_m) = 0.$$
\end{enumerate}
Metric functionals as defined here are natural analogs of the metric functionals considered by Ambrosio and Kirchheim \cite{AK_cur} (in that case, $f$ was a bounded Lipschitz function and the $\pi_i$ were required to be (globally) Lipschitz).

If $X$ and $Y$ are metric spaces and $\varphi \in \Liploc(X,Y)$ with $\varphi^{-1}(A)$ bounded for any bounded set $A \subseteq Y$, a metric functional $T$ on $X$ can be pushed forward to a metric functional on $Y$ of the same dimension as follows:
$$(\varphi_{\#}T)(f, \pi_1, \ldots, \pi_m)  = T(f \circ \varphi, \pi_1 \circ \varphi, \ldots, \pi_m \circ \varphi).$$

The \emph{boundary} of an $m$-dimensional metric functional $T$, $m \geq 1$, is an $(m-1)$-dimensional metric functional $\partial T$ defined by
$$\partial T(f, \pi_1, \ldots, \pi_{m-1}) = T(\sigma, f, \pi_1, \ldots, \pi_{m-1}),$$
where $\sigma$ is any bounded Lipschitz function with bounded support that is identically 1 on the support of $f$. In \cite{LW} it is shown this definition is independent of the choice of $\sigma$ and defines a metric functional. The boundary satisfies nice properties, such as commuting with the push-forward and $\partial (\partial T)=0$. When $N$ is a complete Riemannian manifold with boundary, we can define $T=[[N]]$ by $T(f,\pi_1, \ldots, \pi_m) = \int_N f d\pi_1 \wedge \dots \wedge d\pi_m$, and in this case $\partial T = [[\partial N]]$ because 
$$\int_{\partial N} f d\pi_1 \wedge \dots \wedge d\pi_{m-1} =\int_N \sigma df \wedge d\pi_1 \wedge \dots \wedge d\pi_{m-1}.$$
Note this is only true for $f \in \Lip_B(N)$ and $\sigma$ as described above.

To define currents, we require the notion of the mass measure $\|T\|$ associated to a metric functional $T$. Ambrosio--Kirchheim's definition requires finite mass, but the approach we take following Lang--Wenger will require only locally finite mass.

Given an $m$-dimensional metric functional $T$ and an open set $V \subseteq Z$, the \emph{mass of $T$ in $V$} is defined to be
$$\M_V(T) = \sup \sum_{\lambda} T(f^\lambda, \pi^\lambda),$$
where the supremum is taken over all finite collections 
$f^\lambda \in \LipB(Z)$, 
$\pi^\lambda = (\pi_1^\lambda, \ldots, \pi_m^\lambda) \in \Lip(Z)^m$ 
such that $\Lip(\pi^\lambda_i) \leq 1$, $f^\lambda$ is supported in $V$, 
and $\sum_{\lambda} |f^\lambda| \leq 1$ everywhere. This 
definition serves as the mass of $T$ for any open set. For an arbitrary subset $A \subseteq Z$, we define
\begin{equation}
\label{mass_measure_def}
\|T\|(A) = \inf\{ \M_V(T) \; | \; V \supseteq A \text{ is open}\},
\end{equation}
so certainly $\|T\|(A) = \M_A(T)$ if $A$ is open.

\begin{definition}[\cite{LW}]
An \emph{$m$-dimensional metric current on $Z$ with locally finite mass}, $m \geq 0$, is an $m$-dimensional metric functional $T$ on $Z$ such that given any $\epsilon >0$ and any bounded open set $U \subseteq Z$, we have $\M_U(T) < \infty$ and that there exists a compact set $C \subseteq U$ such that  $\M_{U \setminus C}(T) < \epsilon$.
\end{definition}
The set of such objects will be denoted $\Mlocm(Z)$. For $T \in \Mlocm(Z)$, $\|T\|$ defined above is a Borel regular outer measure on $Z$ that is concentrated on a $\sigma$-compact set \cite[Proposition 2.4]{LW}. 

There is nice compatibility between Ambrosio--Kirchheim currents and the metric currents with locally finite mass; see \cite[Section 2.5]{LW}.
First, define:
$$\Mm(Z)= \{T \in \Mlocm(Z) \;:\; \|T\|(Z) < \infty\}.$$
In Section 2.5 of \cite{LW} it is shown that $\Mm(Z)$ may be identified with the set of currents on $Z$ as originally defined by Ambrosio and Kirchheim in \cite{AK_cur}, and the measures $\|T\|$ defined in \cite{AK_cur} and in \eqref{mass_measure_def} agree on Borel sets.
It is straightforward to verify also that the set of integral currents on $Z$ as defined in \cite{AK_cur} may be identified with
$$\Im(Z)= \{T \in \Ilocm(Z) \;:\; \|T\|(Z) < \infty\},$$
where the precise definition of locally integral currents $\Ilocm(Z)$ appears below in Definition \ref{def_local_integral_current}. (While currents and integral currents in \cite{AK_cur} were only defined on complete spaces, it is pointed out in Section 2.2 of \cite{LW} that the completeness restriction can be avoided.) We therefore may take the above equations as definitions of currents and integral currents on $Z$.

We also recall the definition of two ways of identifying where the $m$-dimensional metric functional $T$ ``lives'': the support and the canonical set:
\begin{align*}
\spt(T) &= \{z \in Z \; : \; \|T\|(B(z,r))> 0 \text{ for all } r > 0\}\\
\set(T) &= \{z \in Z \; : \; \liminf_{r \to 0} \frac{\|T\|(B(z,r))}{r^m} > 0\}.
\end{align*}
In \cite{AK_cur} $\set(T)$ is denoted by $S_T$, but we use the notation from \cite{SW}.

For example, any complete, connected, oriented Riemannian manifold $N^m$ gives rise to a $m$-dimensional metric functional with locally finite mass $T=[[N]]$, given as integration \cite[Section 2.8]{LW}:
\begin{equation}
\label{eqn_integration}
T(f,\pi_1, \ldots, \pi_m) = \int_N f d\pi_1 \wedge \dots \wedge d\pi _m.
\end{equation}
In this case, $\|T\|$ agrees with the Riemannian volume measure (see \cite[Example 2.32]{SW} and \cite[Lemma 22(a)]{JL}), and so $\spt(T)=\set(T)=M$.

Recall that metric functionals $T$ are defined as functions $\LipB(Z) \times \left[\Liploc(Z)\right]^m \to \R$. However, it is shown in \cite{LW} that if $T \in \Mlocm(Z)$, then $T$ has a natural extension to the larger space of $(m+1)$-tuples $(f,\pi)$ such that $f$ is a bounded Borel function with bounded support (and the $\pi$ are as before). This allows us to restrict $T \in \Mlocm(Z)$ to a bounded Borel set $A \subseteq Z$ as follows:

$$(T \llcorner A)(f,\pi) := T(f\chi_A, \pi).$$
Furthermore, $T \llcorner A \in \Mlocm(Z)$ and $\|T \llcorner A\| = \|T\| \llcorner A$.

Our main interest is in the definition of locally integral currents, so we build up to that now.

A subset $C \subseteq Z$ is a \emph{compact $m$-rectifiable set} if there exist compact sets $K_1, \ldots, K_n \subset \R^m$ and Lipschitz maps $f_i: K_ i \to Z$ such that
$$C = \bigcup_{i=1}^n f_i(K_i).$$

\begin{definition}[\cite{LW}]
An $m$-dimensional metric functional $T$ on $Z$ is a \emph{locally integer rectifiable current} if:
\begin{enumerate}[(a)]
\item Given $\epsilon >0$ and any bounded open subset $U$ of $Z$, there is a compact $m$-rectifiable set $C \subseteq Z$ such that $\M_U(T) < \infty$ and $\M_{U \setminus C}(T) < \epsilon$. 
\item For every bounded Borel set $B \subseteq Z$ and every Lipschitz map $\varphi: Z \to \R^m$, we have
$$\varphi_{\#} (T \llcorner B)(f, \pi) = \int_{\R^m} \theta f d\pi_1 \wedge \ldots \wedge d\pi_m,$$
for some integrable $\theta: \R^m \to \Z$. 
\end{enumerate}
The abelian group of locally integer rectifiable $m$-currents on $Z$ will be denoted by $\scrIlocm(Z)$.
\end{definition}
By (a) above, $\scrIlocm(Z) \subseteq \Mlocm(Z)$.

\subsection{Flat and weak convergence of integral currents}
\label{sec_flat_weak}

\begin{definition}[\cite{LW}]
\label{def_local_integral_current}
An $m$-dimensional locally integer rectifiable current $T$ is a \emph{locally integral current} if $m=0$, or, for $m \geq 1$,  if $\partial T \in \scrIlocmone(Z)$. (By \cite[Theorem 2.2]{LW}, it is sufficient to assume $\partial T \in \Mlocmone(Z)$.) The abelian group of locally integral currents on $Z$ will be denoted by $\Ilocm(Z)$.
\end{definition}

\begin{definition}[\cite{LW}]
\label{def_local_flat}
A sequence $T_i$ in $\Ilocm(Z)$ converges to $T \in \Ilocm(Z)$ in the \emph{local flat topology} if, for every closed, bounded set $B \subseteq Z$ there exists a sequence $S_i$ in $\Ilocmplusone(Z)$ such that
$$\|T - T_i - \partial S_i\|(B) + \|S_i\|(B) \to 0$$
as $i \to \infty$. Moreover, $T_i$ in $\Ilocm(Z)$ converges to $T \in \Ilocm(Z)$ \emph{weakly} if for any $f \in \LipB(Z)$ and $(\pi_1, \ldots, \pi_m) \in \Liploc(Z)^m$, we have
$$\lim_{i \to \infty} T_i(f, \pi_1, \ldots, \pi_m) = T(f,\pi_1, \ldots, \pi_m).$$
\end{definition}

Analogous to the corresponding statement for classical currents, if $T_i \to T$ in the local flat topology, then $T_i \to T$ weakly. 
Moreover, under weak convergence, $$\liminf_{i \to \infty} \|T_i\|(U) \geq \|T\|(U)$$ 
for all bounded open sets $U$. We also require the following lemma, generalizing \cite[Lemma 2.7]{Por} to the local case.

\begin{lemma}
\label{lemma_mass_convergence}
Suppose $T_i \to T$ weakly in $\Ilocm(Z)$, and suppose there exists a bounded open set $U \subseteq Z$ such that $\|T_i\|(U) \to \|T\|(U)$ as $i \to \infty$. Then $\|T_i\| \llcorner U \to \|T\| \llcorner U$ weakly as a bounded sequence of finite Borel measures. That is, for every bounded, continuous function $f:Z \to \R$ supported in $U$,
$$\int_Z f d\|T_i\| \to \int_Z f d\|T\|$$
as $i \to \infty$. In particular, for every closed set $C \subset U$,
$$\limsup_{i \to \infty} \|T_i\|(C) \leq \|T\|(C).$$
\end{lemma}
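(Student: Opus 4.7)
The plan is to run a Portmanteau-style bootstrap. The lower semicontinuity of mass on bounded open sets is already recalled in the excerpt; together with the hypothesis $\|T_i\|(U) \to \|T\|(U)$, it will yield the matching $\limsup$ estimate on closed subsets of $U$ (the ``In particular'' clause), and then a layer-cake/Cavalieri argument will upgrade these one-sided estimates to the weak convergence of measures.

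First I would settle the closed-set estimate as a standalone step. Given a closed set $C \subseteq U$, the complement $V := U \setminus C$ is open, and since $U$ is bounded, so is $V$. Lower semicontinuity applied to $V$, combined with the additivity $\|T_i\|(U) = \|T_i\|(C) + \|T_i\|(V)$ and the mass hypothesis on $U$, gives
\[
\limsup_{i \to \infty} \|T_i\|(C) = \lim_{i \to \infty} \|T_i\|(U) - \liminf_{i \to \infty} \|T_i\|(V) \leq \|T\|(U) - \|T\|(V) = \|T\|(C).
\]

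Next, for the weak convergence, let $f \in C_b(Z)$ with $\spt f \subseteq U$. Splitting $f = f^+ - f^-$ and using linearity, it suffices to treat a nonnegative continuous bounded $g$ with $\spt g \subseteq U$. I would then invoke Cavalieri:
\[
\int_Z g\, d\mu = \int_0^\infty \mu(\{g > t\})\, dt = \int_0^\infty \mu(\{g \geq t\})\, dt.
\]
For each $t > 0$, both $\{g > t\}$ (open) and $\{g \geq t\}$ (closed) are contained in $\{g > 0\} \subseteq \spt g \subseteq U$, hence bounded. Thus the LSC hypothesis gives $\liminf_i \|T_i\|(\{g > t\}) \geq \|T\|(\{g > t\})$, while the closed-set step above gives $\limsup_i \|T_i\|(\{g \geq t\}) \leq \|T\|(\{g \geq t\})$. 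Integrating in $t$ via Fatou and reverse Fatou, respectively, produces $\liminf_i \int g\, d\|T_i\| \geq \int g\, d\|T\|$ and $\limsup_i \int g\, d\|T_i\| \leq \int g\, d\|T\|$.

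The main (though modest) obstacle is legitimizing reverse Fatou, which requires an integrable dominating function for $t \mapsto \|T_i\|(\{g \geq t\})$. Here the hypothesis $\|T_i\|(U) \to \|T\|(U) < \infty$ is essential: it supplies a uniform bound $M := \sup_i \|T_i\|(U) < \infty$, so the integrand is dominated by $M \cdot \chi_{(0,\|g\|_\infty]}(t)$, which is Lebesgue integrable on $(0,\infty)$. Combining both one-sided estimates yields $\lim_i \int g\, d\|T_i\| = \int g\, d\|T\|$, and recombining positive and negative parts completes the proof.
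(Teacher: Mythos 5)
Your proof is correct and is essentially the Portmanteau-theorem argument that the paper merely cites (to the proof of Lemma 2.7 in Portegies). You have spelled out the standard steps behind that citation—using the lower semicontinuity on bounded open sets together with mass convergence on $U$ to get the matching $\limsup$ on closed subsets, and then a Cavalieri/Fatou–reverse-Fatou argument to upgrade to weak convergence of measures—so the route matches what the paper invokes.
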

This follows immediately from the proof of \cite[Lemma 2.7]{Por}, which uses the portmanteau theorem.

We conclude this section by recalling Wenger's flat distance for currents of finite mass.  Given two integral $m$-currents $T_1$ and $T_2$ on $Z$, now taken to be a complete metric space, Wenger defined their \emph{flat distance} \cite{We2007}:
$$d_{F}^Z(T_1, T_2) = \inf_{A \in \Im(Z), B \in \Imone(Z)}\{\M(A) + \M(B) \; :\; T_2 - T_1 = A+\partial B\},$$
where we use $\M(A)$ to mean $\|A\|(Z)$, etc. This will be used in the definition of Sormani--Wenger intrinsic flat distance in the next section.

\subsection{Local integral current spaces and pointed $\F$-convergence}
\label{sec_local_currents}

We first recall Sormani--Wenger's definition of an integral current space:
\begin{definition} [\cite{SW}]
\label{def_ICS}
An \emph{integral current space} $N=(X,d,T)$ is a metric space $(X,d)$ equipped with an integral current $T$ on the completion $(\ol X, \ol d)$, such that $\set(T)=X$.
The \emph{dimension} of $N$ is the dimension of $T$.
\end{definition}

\begin{definition} [\cite{SW}]
The \emph{Sormani--Wenger intrinsic flat distance} between integral current spaces
$N_1=(X_1, d_1, T_1)$ and $N_2=(X_2, d_2, T_2)$ of dimension $m$ is:
$$d_{\F}(N_1, N_2) = \inf_{Z,\varphi_1, \varphi_2} \{d_F^Z(\varphi_{1\#}(T_1),\varphi_{2\#}(T_2))\},$$
where the infimum is taken over all complete metric spaces $Z$ and distance-preserving maps $\varphi_1:X_1 \to Z$ and $\varphi_2:X_2 \to Z$.
\end{definition}

Note that distance-preserving maps are also called isometric embeddings, but we refrain from using the latter terminology to avoid confusion with Riemannian isometric embeddings.

Note also that the $\varphi_i$ canonically extend to $\ol X_i$ as distance-preserving maps, so the push-forwards are well-defined.
 
In \cite{SW} it is shown that $d_{\F}$ defines a distance on the set of equivalence classes of precompact integral current spaces of dimension $m$ (where $d_{\F}(N_1, N_2) =0$ if and only if there exists an isometry $\varphi: X_1 \to X_2$ so that $\varphi_\#(T_1)=T_2$).

\begin{definition}[\cite{SW}, \cite{Sor2}]
A sequence of $m$-dimensional integral current spaces $N_i$ \emph{$\F$-converges} to an $m$-dimensional integral current space $N$ (written $N_i \toF N$) if
$$d_{\F}(N_i, N) \to 0$$
and \emph{$\VF$-converges} to $N$ (written $N_i \toVF N$) if
$$d_{\F}(N_i, N) + |\M(N_i) - \M(N)| \to 0,$$
as $i \to \infty$,
where $\M(\cdot)$ denotes the mass of the underlying integral current.
\end{definition}

In general, if $N_i \toF N$, lower semicontinuity of mass holds: $\liminf_{i \to \infty} \M(N_i) \geq \M(N)$. Thus, the volume-preserving hypothesis in $\VF$-convergence simply assures there is no mass drop in the limit.

We also recall an indispensable result of Sormani and Wenger that establishes the existence of a single ``common space'' into which an $\F$-converging sequence embeds:

\begin{thm}[Theorem 4.2 of \cite{SW}]
\label{thm_embedding}
Suppose $N_i=(X_i, d_i, T_i)$ $\F$-converges to $N=(X,d,T)$. Then there exists a complete metric space $Z$ and distance-preserving maps  $\varphi_i : X_i \to Z$ and $\varphi: X \to Z$ such that $d_F^Z(\varphi_{i\#}(T_i), \varphi_{\#}(T)) \to 0$. Furthermore, by applying the Kuratowski embedding theorem, one may assume without loss of generality that $Z$ is a $w^*$-separable Banach space, i.e.,  $Z=G^*$, the dual space of $G$, where $G$ is a separable Banach space.
\end{thm}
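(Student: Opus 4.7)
The plan is to unwind the definition of $d_\F$ to produce for each $i$ an individual ambient complete metric space $Z_i$, then glue all the $Z_i$'s together along their common isometric copy of $X$ to obtain a single complete metric space $Z$ into which every $X_i$ and $X$ embeds simultaneously. The Kuratowski statement is a separate postprocessing step at the end.

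First I would use the definition of $d_\F(N_i, N)$ to choose, for each $i$, a complete metric space $Z_i$ and isometric embeddings $\varphi_i' : X_i \to Z_i$, $\varphi_i'' : X \to Z_i$ with
\[
d_{Z_i}^F\big((\varphi_i')_\# T_i, (\varphi_i'')_\# T\big) \leq d_\F(N_i, N) + 2^{-i}.
\]
Because the underlying integral currents are concentrated on $\sigma$-compact sets, both $X_i$ and $X$ are separable; replacing $Z_i$ by the closure of $\varphi_i'(X_i)\cup \varphi_i''(X)$ (which does not change the flat distance of the pushforwards) I may assume each $Z_i$ is separable.

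Next I would construct $Z$ by gluing. On the disjoint union $W=\bigsqcup_i Z_i$, define $\widetilde d(z,z')=d_{Z_i}(z,z')$ for $z,z'\in Z_i$, and for $z\in Z_i$, $z'\in Z_j$ with $i\neq j$,
\[
\widetilde d(z,z') = \inf_{x \in X}\Big( d_{Z_i}(z, \varphi_i''(x)) + d_{Z_j}(\varphi_j''(x), z')\Big).
\]
The triangle inequality, including across three distinct components, follows because each $\varphi_j''$ is an isometric embedding of $X$, so that bridging through a third $Z_k$ can always be shortened by replacing the two intermediate $X$-crossings by a single one. Quotienting by $\widetilde d = 0$ and then completing yields a complete metric space $Z$ in which all images of $X$ have been identified. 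The canonical inclusions $Z_i\hookrightarrow Z$ are isometric, so composition gives isometric embeddings $\varphi_i:X_i\to Z$, while the $\varphi_i''$ assemble into a single isometric embedding $\varphi:X\to Z$. Given any filling $(\varphi_i')_\# T_i - (\varphi_i'')_\# T = A_i + \partial B_i$ with $A_i\in\Im(Z_i)$, $B_i\in\Imone(Z_i)$, pushing forward by the isometric inclusion $\iota_i:Z_i\hookrightarrow Z$ produces a filling of $\varphi_{i\#}T_i - \varphi_\# T$ in $Z$ with the same mass, so
\[
d_Z^F\big(\varphi_{i\#}T_i, \varphi_\# T\big) \leq d_{Z_i}^F\big((\varphi_i')_\# T_i,(\varphi_i'')_\# T\big) \leq d_\F(N_i,N) + 2^{-i} \to 0.
\]
For the final clause, $Z$ is separable (completion of a countable union of separable spaces), and the Kuratowski embedding sends $Z$ isometrically into $\ell^\infty$ via any countable dense sequence; since $\ell^\infty=(\ell^1)^*$ is $w^*$-separable, I may replace $Z$ by (the closed affine hull of) its image.

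The main obstacle is the gluing step: one must verify that $\widetilde d$ is truly a pseudo-metric extending each $d_{Z_i}$, i.e.\ that the infimum in the cross-component formula cannot strictly undercut $d_{Z_i}(z,z')$ when $z,z'\in Z_i$ via a detour through some $Z_j$. This again reduces to the isometric property of the $\varphi_j''$ on $X$, together with a sum-over-chain refinement of the definition. A secondary but standard point is the monotonicity of mass under isometric pushforward (used to preserve the flat distance bound when embedding $Z_i\hookrightarrow Z$), which follows directly from the definition of $\M$ together with functoriality of the boundary operator.
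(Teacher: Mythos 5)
The paper does not prove this statement itself: it is imported verbatim as Theorem~4.2 of Sormani--Wenger \cite{SW}, so there is no in-paper proof to compare against. Your gluing argument is the standard proof of this result and matches the construction in \cite{SW} (there carried out via their Lemmas~4.1 and~4.3): choose near-optimal ambient spaces $Z_i$ from the definition of $d_\F$, glue all the $Z_i$ along the common isometric copies of $X$, complete, and note that an isometric inclusion pushes a filling in $Z_i$ to a filling of the same mass in $Z$, so the flat distance bound survives. The ``obstacle'' you flag is not actually a gap: for $z,z'\in Z_i$ the cross-component infimum cannot undercut $d_{Z_i}(z,z')$, because any detour through $Z_j$ via $x,y\in X$ satisfies
\begin{equation*}
d_{Z_i}(z,\varphi_i''(x)) + d_{Z_j}(\varphi_j''(x),\varphi_j''(y)) + d_{Z_i}(\varphi_i''(y),z'')
= d_{Z_i}(z,\varphi_i''(x)) + d_{Z_i}(\varphi_i''(x),\varphi_i''(y)) + d_{Z_i}(\varphi_i''(y),z'')
\geq d_{Z_i}(z,z''),
\end{equation*}
since both $\varphi_i''$ and $\varphi_j''$ are isometric embeddings of $X$; the three-component case is handled the same way by replacing two intermediate $X$-crossings with one. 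One small cosmetic point: for the last clause you should simply take $Z=\ell^\infty=(\ell^1)^*$ after the Kuratowski embedding; there is no need (and no benefit) to restrict to the closed affine hull of the image, since the statement asks only that the ambient $Z$ be a $w^*$-separable Banach space.
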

Although the underlying spaces in an $\F$-converging sequence
$N_i = (X_i, d_i, T_i)  \xrightarrow{\F} N=(X,d,T)$
may all be distinct, Sormani \cite{Sor} defines the convergence of points $x_i\in X_i$ to $x \in (\ol X, \ol d)$ as the existence of $Z$ and distance-preserving maps  $\varphi_i$ and $\varphi$ as in Theorem \ref{thm_embedding} for which 
\begin{equation}
\label{eqn_points}
\varphi_i(x_i) \to \varphi(x) \text{ in } Z \text{ as } i \to \infty,
\end{equation}
where $\varphi$ denotes the canonical extension to $\ol X$. This concept will be used in the definition of pointed convergence below. 

Next, observe that the definition of integral current space readily generalizes to the local setting:

\begin{definition} [\cite{JL}]
A \emph{local integral current space} of dimension $m \geq 0$ is a triple $N=(X,d,T)$ in which $(X,d)$ is a metric space and $T$ is a locally integral $m$-current on $(\ol X, \ol d)$ such that $\set(T)=X$. 
\end{definition}

For example, any complete, connected, oriented Riemannian manifold $N^m$ forms a local integral current space with the usual distance function, where $T=[[N]]$ is given as integration as in \eqref{eqn_integration},
with again $\|T\|$ agreeing with Riemannian volume measure.

We gather some basic known results regarding local integral current spaces:

\begin{prop}\label{prop-propertiesIntCurr}
Let $N=(X,d,T)$ be a local integral current space of dimension $m$. \begin{enumerate}[(a)]
\item Given $q \in X$, for almost all $r>0$, $T \llcorner B(q,r)$ is an integral $m$-current on $X$, so in particular, $$N\llcorner B(q,r) := (\set(T \llcorner B(q,r)), d, T \llcorner B(q,r))$$
forms an integral current space. Throughout the paper, the ball in the notation ``$T \llcorner B(q,r)$'' refers to a ball in the completion of $X$.)
\item $\|T\|$ is a Borel measure on $X$ that is finite on bounded sets.
\item $(X,d)$ is countably $\mathcal{H}^m$-rectifiable, that is, there exist at most a countable number of Lipschitz maps $\varphi_i : A_i \subset \mathbb R^m \to X$ such that $\mathcal H^m( X \setminus \bigcup_i \varphi_i(A_i))=0$. If desired, the $A_i$ can be taken to be compact and the $\varphi_i(A_i)$ pairwise disjoint. 
\item There exists a $w^*$-separable Banach space $Y$ such that $(X,d)$ embeds isometrically in $Y$.
\end{enumerate}
\end{prop}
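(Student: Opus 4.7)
The plan is to first establish that $(X,d)$ is separable, and then apply the Kuratowski embedding to land in a $w^*$-separable Banach space.

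For separability, I would exploit that $T$ is a metric current with locally finite mass on $\bar X$, so by \cite[Proposition 2.4]{LW} (as restated in part (b)) the mass measure $\|T\|$ is a Borel regular measure concentrated on a $\sigma$-compact set $K \subseteq \bar X$. I claim $\set(T) \subseteq \bar K$. Indeed, if $z \notin \bar K$, there is some $r_0 > 0$ with $B(z, r_0) \cap K = \emptyset$, so $\|T\|(B(z,r)) \leq \|T\|(\bar X \setminus K) = 0$ for all $r \leq r_0$, forcing
$$\liminf_{r \to 0} \frac{\|T\|(B(z,r))}{r^m} = 0,$$
and thus $z \notin \set(T)$. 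Since $K$ is $\sigma$-compact it is separable, and the closure of a separable subset of a metric space is separable; hence $\bar K$ is separable, and therefore $X = \set(T) \subseteq \bar K$ inherits separability.

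With $(X,d)$ separable, I would next carry out the standard Kuratowski/Fr\'echet embedding: fix a basepoint $x_0 \in X$ and a countable dense sequence $\{x_n\}_{n \geq 1} \subseteq X$, and set
$$\Phi : X \to \ell^\infty, \qquad \Phi(x) = \bigl( d(x,x_n) - d(x_0, x_n) \bigr)_{n \geq 1}.$$
Each coordinate is bounded in absolute value by $d(x, x_0)$ via the triangle inequality, so $\Phi$ is well defined into $\ell^\infty$; the triangle inequality gives $\|\Phi(x) - \Phi(y)\|_\infty \leq d(x,y)$, while density of $\{x_n\}$ allows one to select $x_n \to x$ and pass to the limit in $|d(x,x_n) - d(y,x_n)|$ to obtain the reverse inequality. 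Thus $\Phi$ isometrically embeds $X$ into $Y := \ell^\infty$.

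To conclude, I would observe that $\ell^\infty = (\ell^1)^*$ and $\ell^1$ is separable, so $\ell^\infty$ is $w^*$-separable (on norm-bounded sets the $w^*$-topology is metrized by a countable dense subset of $\ell^1$, and a $w^*$-dense countable set in $\ell^\infty$ is then obtained by scaling). The main subtlety is the first step, deducing separability of $X = \set(T)$ from the $\sigma$-compact concentration of $\|T\|$; once that is in place, the rest is the textbook Kuratowski embedding combined with the duality observation about $\ell^\infty$.
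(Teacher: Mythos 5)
Your proposal is correct and follows essentially the same route as the paper's own two-line argument: both rely on \cite[Proposition 2.4]{LW} to get a $\sigma$-compact set on which $\|T\|$ is concentrated (and hence separability of $X=\set(T)$, which the paper phrases via $\spt(T)$), and then invoke the Kuratowski embedding into $\ell^\infty=(\ell^1)^*$. Your write-up just makes the two cited steps explicit, including the inclusion $\set(T)\subseteq \ol K$, which is the same content.
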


\begin{proof}
Part (a) follows from \cite[Lemma 2.34]{Sor} and \cite[Lemma 13]{JL}, and part (b) is proven in \cite{LW}. Claim (c) follows from (a) and \cite[Remark 2.36]{SW}.

To verify (d), we first recall that $\spt(T)$ is separable. To see this, recall  it is shown in \cite[Lemma 2.9]{AK_cur} that $\spt(T)$ is separable in the case $\|T\|(\ol X) < \infty$. The general case follows by writing $\spt(T)$ as the countable union of separable sets $\spt(T \llcorner B(p,r_i))$ for an appropriate sequence $r_i \to \infty$. By the Kuratowski embedding theorem, we may isometrically embed $\spt(T)$ into $\ell^\infty$, which the dual of $\ell^1$. Since $\spt(T) \supseteq \set(T)=X$, the claim follows.
\end{proof}

 The following definition is taken from \cite{JL}, suitably generalized to possibly incomplete spaces. We also refer the reader to the paper of Takeuchi \cite{Tak} giving a different approach to pointed $\F$-convergence (cf. \cite[Remark 1]{JL}).

 \begin{definition}
 \label{def_pointed_F}
A sequence $N_i=(X_i,d_i,T_i)$ of  local integral current spaces of dimension $m$ converges to a  local integral current space $N=(X,d,T)$ of dimension $m$ in the \emph{pointed Sormani--Wenger intrinsic flat sense} or ``\emph{pointed $\F$-sense}'' (respectively, \emph{pointed volume-preserving intrinsic flat sense} or ``\emph{pointed $\VF$-sense}'') with respect to $p_i \in X_i$ and $q \in \ol{X}$
if
for any $r_0 > 0$, there exists $r \geq r_0$ such that $N \llcorner B(q,r)$ and $N_i \llcorner B(p_i,r)$ are precompact integral current spaces (for all $i$ sufficiently large), and $N_i \llcorner B(p_i,r) \xrightarrow{\F} N \llcorner B(q,r)$ (respectively, $N_i \llcorner B(p_i,r) \xrightarrow{\VF} N \llcorner B(q,r)$), and if $p_i \to q$ as in \eqref{eqn_points}.
 \end{definition}
 
We verify that pointed $\F$-convergence is a reasonable notion, in that limits are unique, up to current-preserving and basepoint-preserving isometry:

 \begin{prop}
 \label{prop_pointed_limit}
Suppose $N_i=(X_i,d_i,T_i)$ is a sequence of  local integral current spaces of dimension $m$ that converges in the pointed $\F$-sense to 
a local integral current space of dimension $m$,  $N=(X,d,T)$, with respect to $p_i \in X$ and $q \in \ol X$. Suppose that $N_i$ also converges in the pointed $\F$-sense to another local integral current space of dimension $m$,  $N'=(X',d',T')$, with respect to $p_i \in X$ and $q' \in \ol X'$. Then there exists a current-preserving isometry $\Phi$ (i.e., $\Phi_{\#}(T) = T'$) from the completion of $(X,d)$ to the completion of $(X',d')$ such that  $\Phi(q)=q'$.
\end{prop}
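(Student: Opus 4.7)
The plan is to reduce uniqueness of the pointed flat limit to uniqueness of $\F$-limits of precompact integral current spaces, applied ball by ball, and then to assemble the resulting scale-level isometries into a single global isometry via a diagonal Arzel\`a--Ascoli argument.

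First I would choose an unbounded sequence of radii $r_k \uparrow \infty$ that is admissible for both pointed convergences simultaneously: for each $k$, the balls $N \llcorner B(q, r_k)$, $N' \llcorner B(q', r_k)$, and $N_i \llcorner B(p_i, r_k)$ (for all large $i$) are precompact integral current spaces, and the latter $\F$-converges to both $N \llcorner B(q, r_k)$ and $N' \llcorner B(q', r_k)$. For each fixed $k$, an amalgamation of the two complete metric spaces provided by Theorem \ref{thm_embedding}---one for each pointed convergence, glued along the common isometric copies of the $X_i$---yields a single complete metric space $W_k$ into which $X_i \llcorner B(p_i, r_k)$, $X \llcorner B(q, r_k)$, and $X' \llcorner B(q', r_k)$ all embed isometrically, the two pushed-forward sequences $\F$-converge in $W_k$ to the embedded images of $T \llcorner B(q, r_k)$ and $T' \llcorner B(q', r_k)$, and the images of $p_i$ converge in $W_k$ to a single point equal to both extended embedded images of $q$ and of $q'$. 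Uniqueness of $\F$-limits for finite-mass currents in a fixed complete metric space then forces the two pushforwards to coincide in $W_k$, which upon unwinding the embeddings produces a current-preserving isometry $\Psi_k : \overline{\set(T \llcorner B(q, r_k))} \to \overline{\set(T' \llcorner B(q', r_k))}$ with $\Psi_k(q) = q'$.

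Next I would fix a countable dense subset $\{x_j\}$ of $X$. Each $x_j$ lies in $\set(T \llcorner B(q, r_k))$ for all $k$ large enough, and the isometry property together with $\Psi_k(q) = q'$ forces $\Psi_k(x_j)$ to lie in the closed ball of radius $d(x_j, q)$ about $q'$ in $\overline{X'}$; this ball is compact, being a closed subset of $\overline{\set(T' \llcorner B(q', r_{k^*}))}$ for any $k^*$ with $r_{k^*} > d(x_j, q)$. A diagonal extraction then produces a subsequence along which $\Psi_k(x_j)$ converges for every $j$, and the pointwise limit, being distance-preserving on the dense set, extends to a global isometry $\Phi : \overline{X} \to \overline{X'}$ with $\Phi(q) = q'$. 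Equicontinuity upgrades this pointwise convergence of $\Psi_{k'}$ to uniform convergence on each of the compact sets $\overline{\set(T \llcorner B(q, r_k))}$, which allows one to pass to the limit in the identity $\Psi_{k'\#}(T \llcorner B(q, r_k)) = T' \llcorner B(q', r_k)$ (valid for all $k' \geq k$) and conclude $\Phi_{\#}(T \llcorner B(q, r_k)) = T' \llcorner B(q', r_k)$ for every $k$; exhausting $T$ and $T'$ by these restrictions yields $\Phi_{\#} T = T'$.

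I expect the main obstacle to be this diagonal step: the scale-level isometries $\Psi_k$ need not agree on overlaps, since they may differ by symmetries of the limit balls fixing $q'$, so passing to a single global isometry hinges on the precompactness of balls supplied by the pointed convergence hypothesis together with continuity of pushforward under uniform convergence of isometries. A smaller technical point is the amalgamation that produces $W_k$ with the required identification of the embedded images of $q$ and $q'$, which is standard but worth checking carefully.
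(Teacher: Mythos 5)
Your overall architecture---produce a current-preserving, base-point-preserving isometry at each scale $r_k$, then run a diagonal Arzel\`a--Ascoli argument over the compact balls and pass to the limit in the pushforward identity---is the same as the paper's. The difference is in how the scale-level isometries $\Psi_k$ are produced. The paper obtains them by applying \cite[Proposition~1.1]{LW} to the sequence of closed balls $\ol B(p_i,r_k^*)$, the points $p_i$, the integral currents $T_i \llcorner B(p_i,r_k^*)$, and the two families of isometric embeddings $\varphi_i$, $\varphi_i'$; this proposition is the precise statement that two pointed flat limits (with base points) of a single sequence are isomorphic via a current-preserving isometry taking limit point to limit point, and it is taken as a black box. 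You instead attempt to reprove this core step by an amalgamation construction, and that is where there is a genuine gap.

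Specifically, the construction of $W_k$ by gluing $Z$ and $Z'$ along the common isometric copies of all the $X_i$ simultaneously does not, in general, yield a metric space in which $X$, $X'$, and the $X_i$ remain isometrically embedded. The quotient pseudometric on $Z \sqcup Z' / \!\sim$ can strictly decrease distances in $Z'$ (or $Z$) via chains that pass through two different identifications: for $i \neq j$, the distance between $\varphi_i(X_i)$ and $\varphi_j(X_j)$ measured in $Z$ need not agree with the distance between $\varphi_i'(X_i)$ and $\varphi_j'(X_j)$ measured in $Z'$, so a detour $\varphi'(a) \leadsto \varphi_i'(x) = \varphi_i(x) \leadsto \varphi_j(y) = \varphi_j'(y) \leadsto \varphi'(b)$ in the glued space can be shorter than $d'(a,b)$. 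Consequently the claim that ``$X_i \llcorner B(p_i,r_k)$, $X \llcorner B(q,r_k)$, and $X' \llcorner B(q',r_k)$ all embed isometrically'' in $W_k$, and hence the appeal to uniqueness of flat limits in a fixed ambient space, is not justified. (Gluing along a single $\varphi_i(X_i) \sim \varphi_i'(X_i)$ is a legitimate metric amalgam and would show $d_{\F}(N \llcorner B(q,r_k), N' \llcorner B(q',r_k)) = 0$, but then to extract a current-preserving isometry that also sends $q$ to $q'$ you still need the Sormani--Wenger zero-distance theorem plus an argument to control the base point, which is essentially what \cite[Proposition~1.1]{LW} packages.) The remainder of your argument---the diagonal extraction on a countable dense set using compactness of the balls, and passing to the limit in $\Psi_{k'\#}(T \llcorner B(q,r_k)) = T' \llcorner B(q',r_k)$ by uniform convergence and continuity of metric currents---matches the paper and is sound once the $\Psi_k$ are in hand.
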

A similar result appears in \cite[Proposition 3.7]{Tak}.

\begin{proof}
We will first inductively define an increasing sequence of radii $\{r_k^*\}$, diverging to infinity, and a sequence of isometries $\Phi_k : B(q, r_k^*) \to B(q', r_k^*)$ that satisfy $\Phi_k(q) = q'$ and that are current-preserving in the sense that $(\Phi_k)_\# (T \llcorner B(q, r_k^*)) = T' \llcorner B(q', r_k^*)$. Afterwards, we will use a diagonal argument to create a current-preserving isometry $\Phi : X \to X'$.

Let $r_1>0$ be arbitrary. By definition of pointed $\F$-convergence, there exist $r>r_1$ and $r'>r$ such that $N \llcorner B(q,r)$, $N_i \llcorner B(p_i,r)$, $N' \llcorner B(q',r')$, and $N_i \llcorner B(p_i,r')$ are all precompact integral current spaces for all $i$ sufficiently large, and that 
$$N_i \llcorner B(p_i,r) \toF N \llcorner B(q,r)$$ with $p_i \to q$ and 
$$N_i \llcorner B(p_i,r') \toF N' \llcorner B(q',r')$$
with $p_i \to q'$. 

By \cite[Lemma 4.1]{Sor}, we may pass to a subsequence such that for almost all radii $\leq r$ we have convergence in the first equation above, and for almost all radii $\leq r'$, we have convergence in the second equation above. In particular, there exists $r_1^* > r_1$ such that 
$$N_i \llcorner B(p_i,r_1^*) \toF N \llcorner B(q,r_1^*)$$
and
$$N_i \llcorner B(p_i,r_1^*) \toF N' \llcorner B(q',r_1^*)$$
as sequences of precompact integral current spaces, with $p_i \to q$ and $p_i \to q'$, respectively. 
That is, there exist complete metric spaces $Z$ and $Z'$ and distance-preserving maps  $\varphi: B(q,r_1^*) \to Z$, $\varphi_i :B(p_i,r_1^*) \to Z$, $\varphi' : B(q',r_1^*) \to Z'$, and $\varphi_i' :  B(p_i,r_1^*) \to Z'$ and  such that
\[ 
d_F^Z((\varphi_i)_\# (T_i \llcorner B(p_i,r_1^*)), \varphi_{\#} (T \llcorner B(q,r_1^*))) \to 0
\]
and $\varphi_i(p_i) \to \varphi(q)$ in $Z$, and
\[
d_F^{Z'}((\varphi_i')_{\#} (T_i \llcorner B(p_i,r_1^*), (\varphi')_{\#} (T' \llcorner B(q',r_1^*)))) \to 0
\]
and $\varphi'_i(p_i) \to \varphi'(q')$ in $Z'$. 

We apply \cite[Proposition 1.1]{LW} to the sequence of complete metric spaces given by the closed balls $\ol B(p_i,r_1^*)$ in the completions $\ol X_i$, the points $p_i$, the integral currents $T_i \llcorner B(p_i,r_1^*)$, and the distance-preserving maps  $\varphi_i$ and $\varphi_i'$ (which canonically extend to $\ol B(p_i,r_1^*)$). This produces an isometry $$\Phi_1:\{q\} \cup \spt(T \llcorner B(q,r_1^*)) \to \{q'\} \cup \spt(T' \llcorner B(q',r_1^*))$$ that satisfies $\Phi_1(q) = q'$ and is current-preserving: 
$$(\Phi_1)_{\#}(T \llcorner B(q,r_1^*)) = T' \llcorner B(q',r_1^*).$$ We claim that the open ball $B_{\ol X}(q,r_1^*)$ in the completion is a subset of $ \spt(T \llcorner B(q,r_1^*))$. Let $x \in B_{\ol X}(q,r_1^*)$. There exists a sequence $\{x_j\}$ in $X$ that converges in $\ol X$ to $x$. By the triangle inequality, $\ol d(x_j,q) < r_1^*$ for $j$ sufficiently large. Now, since $x_j \in X=\set(T)$, it follows that 
$$x_j \in \set(T \llcorner B(q,r_1^*)) \subseteq \spt(T \llcorner B(q,r_1^*)).$$ Since the support is a closed set, we have $x \in \spt(T \llcorner B(q,r_1^*))$, which shows the claim. Thus, $\Phi_1$ restricts to an isometry $B_{\ol X}(q,r_1^*) \to B_{\ol X'}(q',r_1^*)$, which then extends to an isometry (also denoted $\Phi_1$) $\ol B(q,r_1^*) \to \ol B(q', r_1^*)$ (where again $\ol B$ refers to the closed ball in the completion). Moreover, $\Phi_1$ remains base-point-preserving and current-preserving.

Suppose $r_k^*$ and $\Phi_k$ have been defined for some $k \in \mathbb{N}$. Now take some $r_{k+1} > r_k^* + 1$ and in the same way as above find an $r_{k+1}^*> r_{k+1}$ and a current-preserving isometry $\Phi_{k+1}: B(q, r_{k+1}^*) \to B(q', r_{k+1}^*)$ that maps $q$ to $q'$. Clearly the $r_k^*$ diverge to infinity.

Now that we have created a sequence of base point- and current-preserving isometries $\{\Phi_k\}$ we will perform a diagonal argument to create a base point- and current-preserving isometry $\Phi : \ol X \to \ol X'$. For each $k$, we can restrict the maps $\Phi_{k}$ to maps  from $\ol B(q,r_1^*) \to \ol B(q',r_1^*)$. 
Note that $\ol B(q,r_1^*)$ and $\ol B(q',r_1^*)$ are compact metric spaces by hypothesis, so by the Arzela--Ascoli theorem for functions defined on compact metric spaces taking values in compact metric spaces \cite[Theorem 7.17]{KJ}, 
there exists a subsequence $n^{(1)}$ indexed by $\ell$ such that $\Phi_{n^{(1)}_\ell} : \ol B(q, r_1^*) \to \ol B(q', r_1^*)$ converges uniformly on $\ol B(q, r_1^*)$ as $\ell \to \infty$ to an isometry and clearly the resulting map takes $q$ to $q'$. If for some $j \in \mathbb{N}$, the subsequence $n^{(j)}$ has been defined, we may select a subsequence $n^{(j+1)}$ of $n^{(j)}$ such that the restriction of the 
sequence of functions $\Phi_{n^{(j+1)}_\ell}$ to isometries 
$\ol B(q, r_{j+1}^*)  \to  \ol B(q', r_{j+1}^*)$ converges uniformly on $\ol B(q, r_{j+1}^*)$ as $\ell \to \infty$ to an isometry. 
Finally, we consider the diagonal subsequence $\Phi_{n^{(\ell)}_{\ell}}$. This subsequence converges uniformly on compact sets to some map $\Phi:\ol X \to \ol X'$. It is straightforward to see that $\Phi$ is an isometry and $\Phi(q) = q'$.

We still need to show that $\Phi_\# (T) = T'$. It suffices to show that for every $k \in \mathbb{N}$, 
\[
\Phi_\# (T \llcorner B(q, r_k^*)) = T' \llcorner B(q', r_k^*).
\]
This follows as for every $\ell$ sufficiently large we have
\[ 
(\Phi_{n^{(\ell)}_\ell})_\# (T \llcorner B(q, r_k^*)) = T' \llcorner B(q', r_k^*).
\]
and the isometries $\Phi_{n^{(\ell)}_\ell}$ converge uniformly to $\Phi$. Here, we are using continuity properties of metric currents with locally finite mass: see (2.4) in \cite{LW}.
\end{proof}

\subsection{Dirichlet energy and capacity}
\label{sec_dirichlet}

For integral currents on a Banach space, Portegies gave a definition of the Dirichlet energy of a Lipschitz function based on Ambrosio--Kirchheim's definition of the tangential differential of a Lipschitz function on a rectifiable set in a Banach space. By embedding a metric space in a Banach space, this allowed him to define the Dirichlet energy of a Lipschitz function on an integral current space. To make this precise, we recall from \cite{AK_rect} (with the necessary details included in Appendix B) the notion of the tangential differential $d_x^S f$ of a Lipschitz function $f$ on a $\cH^m$-countably rectifiable set $S$. We then proceed to generalize the Dirichlet energy to a local integral current space $N=(X,d,T)$. Given a Lipschitz function $f:X \to \R$ with bounded support, define (following \cite{Por}):
$$E_N(f) = \int_{X} |d_x ^Xf|^2 d \|T\|(x),$$
where $d_x^X f$ is the tangential differential when $X$ is embedded in some appropriate Banach space. This is well-defined and independent of the embedding. We also note $|d_x ^Xf| \leq \Lip(f)$ and $\|T\|$ is finite on bounded sets, so that $E_N(f)<\infty$. This definition of Dirichlet energy is consistent with the usual definition in the smooth case, again following \cite{Por}:
\begin{prop}
Let $(M,g)$ be a complete, connected, oriented Riemannian manifold, and let $N$ be the associated local integral current space. Then for a Lipschitz function $f$ of $M$ with compact support,
$$E_N(f) = \int_M |\nabla f|^2 dV,$$
where the gradient norm and volume measure are taken with respect to $g$.
\end{prop}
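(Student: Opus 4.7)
The plan is to reduce to a pointwise identification $|d_x^M f|^2 = |\nabla f(x)|_g^2$ at $\|T\|$-almost every $x \in M$, and then integrate, using that $\|T\|$ coincides with the Riemannian volume measure (as cited in the excerpt). First, by Proposition (d), isometrically embed $M$ into a $w^*$-separable Banach space $Y$. Choose a countable smooth atlas $\{(U_i,\varphi_i)\}$ whose images $\varphi_i(U_i) \subset M \subset Y$ partition $M$ up to an $\cH^m$-null set; each $\varphi_i : U_i \subseteq \R^m \to Y$ is a locally Lipschitz parametrization, exhibiting $M$ as a countably $\cH^m$-rectifiable subset of $Y$. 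The Ambrosio--Kirchheim tangential differential $d_x^M f$ is then defined at $\cH^m$-a.e.\ $x = \varphi_i(a)$ in terms of the metric differential $\mathrm{md}(\varphi_i)_a$ and the ordinary differential $d_a(f \circ \varphi_i)$, the latter existing at $\mathcal{L}^m$-a.e.\ $a$ by Rademacher's theorem.

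For the pointwise computation, observe that because the embedding $M \hookrightarrow Y$ is isometric with respect to the Riemannian distance, and $d_g(\varphi_i(a+rv), \varphi_i(a)) = r\sqrt{g_{jk}(x)\,v^j v^k} + o(r)$ for $x = \varphi_i(a)$ in a smooth chart, the metric differential satisfies
\[
\mathrm{md}(\varphi_i)_a(v) \;=\; \sqrt{g_{jk}(x)\, v^j v^k}
\]
at $\mathcal{L}^m$-a.e.\ $a$. Invoking Theorem \ref{thm_df} in the appendix (cf.\ \cite[Thm.~5.2]{Por}), the tangential differential $d_x^M f$ is the linear functional that composes with the chart differential to give $d_a(f \circ \varphi_i)(v) = \partial_j f(x)\, v^j$; its dual norm is therefore
\[
|d_x^M f| \;=\; \sup_{v \neq 0} \frac{\partial_j f(x)\, v^j}{\sqrt{g_{jk}(x)\, v^j v^k}} \;=\; \sqrt{g^{jk}(x)\, \partial_j f(x)\, \partial_k f(x)} \;=\; |\nabla f(x)|_g,
\]
by the standard Cauchy--Schwarz duality on the tangent space.

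Since $\|T\|$ agrees with the Riemannian volume measure on $M$, integrating the pointwise identity yields $E_N(f) = \int_M |\nabla f|_g^2\, dV$, as claimed. The principal obstacle is the identification of $\mathrm{md}(\varphi_i)_a$ with the Riemannian norm on tangent vectors: this step crucially uses that the ambient embedding is isometric with respect to the Riemannian distance (not merely bi-Lipschitz), since a generic Lipschitz embedding would distort lengths infinitesimally and destroy the pointwise agreement. The well-definedness of $E_N(f)$ independent of the choice of embedding and atlas is itself guaranteed by the general Ambrosio--Kirchheim theory as recorded in Theorem \ref{thm_df}, which also ensures that the a.e.\ pointwise identification is enough to conclude equality of the integrals against $\|T\|$.
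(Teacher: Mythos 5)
Your proof is correct and follows the same route that Portegies takes in \cite{Por}; the paper itself states this proposition without proof, deferring to that reference. One misattribution to fix: the chart-composition identity $wd_y(f\circ g) = d^S_{g(y)}f \circ wd_y g$ that you need for the pointwise identification is the characterization of the tangential differential from \cite[Theorem 8.1]{AK_rect} (stated in the appendix), not from Theorem \ref{thm_df}, which is about the minimal relaxed gradient.
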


Now, let $N=(X,d,T)$ denote a local integral current space of dimension $m\geq 2$. Let $K \subseteq X$ be a closed, bounded subset. We define
\begin{equation}
\label{def_cap}
\capac_N(K) = \frac{1}{\gamma_m}\inf \{ E_N(f) \; : \; f\in \Lip_B(X),  f \equiv 1 \text{ on a neighborhood of } K\},
\end{equation}
where $\gamma_m = (m-2)\omega_{m-1}$ for $m \geq 3$ and $\gamma_2 = 2\pi$. It is clear that $\capac_N(K) \in [0, \infty)$ and that $K_1 \subseteq K_2 $ implies $\capac_N(K_1) \leq \capac_N(K_2)$. 
In Euclidean space, this agrees with the usual definition of capacity stated in the introduction. (The latter required merely that  $f\equiv 1$ on $K$ itself, but in $\R^m$ the distinction is immaterial see, e.g., 
 \cite[Section 4.7]{EG}.) For example, a ball of radius $r$ in $\R^m$ has capacity $r^{m-2}$ for $m\geq 3$. Note that capacity is only interesting (i.e., not identically zero) in the case in which $X$ is unbounded, and even then it is possible that $\capac_N \equiv 0$ depending on the behavior of $X$ and $\|T\|$ ``at infinity.''

\begin{remark}
Given a competitor $f$ for the capacity, replacing $f$ with its truncation between values of 0 and 1 produces another competitor whose Dirichlet energy has not increased. Thus, we may restrict to functions $f$ in \eqref{def_cap} satisfying $0 \leq f \leq 1$.
\end{remark}

\begin{remark}
One can similarly define the $p$-capacity, for $p \geq 1$, by replacing $|d_x^X f|^2$ in the definition of $E_N$ with $|d_x^X f|^p$.
\end{remark}

\section{Semicontinuity of capacity}
\label{sec_main}

The main results of this paper are Theorems \ref{thm_balls} and \ref{thm_sublevel} below, where Theorem \ref{thm_balls} is a restatement of Theorem \ref{thm_balls_intro} from the introduction. In both cases we assume pointed $\VF$-convergence of local integral current spaces as in Definition  \ref{def_pointed_F} and establish the upper semicontinuity of the capacity of sets in the spaces. In the first case, the sets are balls centered around converging points; in the second, the sets are defined as Lipschitz sublevel sets. Both theorems will ultimately be consequences of the main technical result, Theorem \ref{thm_extrinsic}.

\subsection{Corresponding regions}
Before presenting the theorems, we will recall the construction in \cite{JL} of ``corresponding regions.'' Let $N_i = (X_i, d_i, T_i)$ and $N = (X, d, T)$ be a sequence of local integral current spaces and a local integral current space, respectively, all of dimension $m \geq 2$, and let $p_i \in X_i$ and $p \in \overline X$. We assume the following property holds: for any $r_0 > 0$, there exists $r \geq r_0$, a $w^*$-separable Banach space $Y$, and distance-preserving maps $\varphi_i: \set(T_i \llcorner B(p_i,r)) \to Y$ and $\varphi: \set(T \llcorner B(p,r)) \to Y$ such that $\varphi_i(p_i) \to \varphi(p)$ in $Y$ as $i \to \infty$. 
Although we do not yet assume pointed $\F$-convergence, we see that this property holds whenever $N_i  \to N$  in the pointed $\mathcal{F}$-sense, by definition and Theorem \ref{thm_embedding}. Suppose $K \subsetneq X$ is 
nonempty and compact. The corresponding regions, to be defined, will be subsets $K_i$ of $X_i$.

Fix a function $u:X \to \R$ with
\begin{align}\label{defining_function}
\begin{cases}
& \{u \leq 0\} = K, \\
& \Lip(u)=1, \text{ and}\\
& \text{for all open } O \supseteq K, \text{ there exists } \beta \in (0,1] \text{ such that } u(x) \geq \beta d(x,K) \text{ for all } x \in X \setminus O.
\end{cases}
\end{align}

Such a function $u$ will be called a \emph{defining function} for $K$. For example, $u(x)= d(x, K)\geq 0$ for $x \in X$ is such a function with $\beta=1$, but the definition allows for, for example, a signed distance function to $\partial K$ if it can be well-defined and if it is a 1-Lipschitz function. As another example, if $K=\ol B(p,r_1)$, then $u(x) = d(p,x) -r_1$ serves as a defining function for $K$, provided $\ol B(p,r_2)$ is compact for some $r_2>r_1>0$. This fact will be verified in the proof of Theorem \ref{thm_balls}.

We fix $r_0$ so that $B(p,r_0) \supset K$ in $X$, and obtain $r\geq r_0$ and appropriate $Y$, $\varphi_i$, and $\varphi$ as above. 
Let  $U: Y \to \R$  be the standard 1-Lipschitz extension\footnote{When we refer to the ``standard Lipschitz extension'' of a Lipschitz function $f:A \subset Y \to \R$, we mean the extension $\tilde f:Y \to \R$ given by
$$\tilde f(y) := \inf_{a\in A} \left(f(a) + \Lip(f) d_Y(a,y)\right),$$
which has $\Lip(\tilde f)=\Lip(f)$.
} of $u\circ \varphi^{-1}$, where the latter is defined on the image of $\varphi$. We then define $u_i = U \circ \varphi_i$, a 1-Lipschitz function on $\set(T_i \llcorner B(p_i,r))$ for each $i$. 
Given a sequence of nonnegative real numbers $\{\alpha_i\}$ converging to 0, the sets
\begin{equation}
\label{eqn_K_i}
K_i = u_i^{-1}(-\infty,\alpha_i] \subseteq X_i
\end{equation}
will be called a sequence of \emph{corresponding regions}; they depend on the choices of $u$, $r_0$, $r$, the space $Y$, the maps $\varphi_i$ and $\varphi$, and the sequence $\alpha_i$. They were essentially defined in \cite{JL}, where it was proved that (roughly --- see the proof of Proposition \ref{prop_K_i} below) if $N_i \to N$ in the pointed $\F$ sense, then $N_i \llcorner K_i$ subsequentially $\F$-converges to $N \llcorner K$. (Here we are generalizing the definition in \cite{JL} slightly by allowing the $\alpha_i$ to depend on $i$ as well as working in \emph{local} integral current spaces $N_i$, though we are restricting the choice of defining function $u$.) 

\medskip
Let us try to provide some intuition for the sets $K_i$. We claim that 
$$\varphi_i(\set(T_i \llcorner B(p_i,r)))
\cap  \{ y \in Y \, |\, 
d_Y(y, \varphi(K)) \leq \alpha_i\} 
\subseteq \varphi_i(K_i).$$ 
Indeed let $x \in \set(T_i \llcorner B(p_i,r))$, define $y := \varphi_i(x)$ and assume that $d_Y(y, \varphi(K)) \leq \alpha_i$. Then, since $u \leq 0$ on $K$,
\[
\begin{split}
u_i(x) &= \inf_{a \in \varphi(\set(T \llcorner B(p,r)))}[u \circ \varphi^{-1}(a) + d_Y(a, y) ] \leq 
\inf_{a \in \varphi(K)}[u\circ \varphi^{-1}(a) + d_Y(a, y) ]\\
&\leq
\inf_{a \in \varphi(K)} d_Y(a, y)
=
d_Y(y, \varphi(K) )
\leq \alpha_i,
\end{split}
\]
so $x \in K_i$.
In case the defining function is chosen to be $u := d(\cdot, K)$, then the sets $\varphi_i(K_i)$ are \emph{exactly} the intersections of $\varphi_i(\set(T_i \llcorner B(p_i,r)))$ with the closed $\alpha_i$-neighborhood of $\varphi(K)$.

\medskip

We now establish, in Proposition \ref{prop_K_i}, conditions to assure that the sets $K_i$ are not ``too small'' when we have pointed $\F$-convergence --- a priori they could even be empty. Note that we include  (b) below to accommodate applications that may involve a signed distance function.

\begin{prop}
\label{prop_K_i}
Let $N_i = (X_i, d_i, T_i) \to N = (X, d, T)$  in the pointed $\mathcal{F}$-sense as local integral current spaces of dimension $m \geq 2$
with respect to $p_i \in X_i$ and $p \in \overline X$ as in Definition  \ref{def_pointed_F}. Suppose $K \subsetneq X$ is 
nonempty and compact, fix some defining function $u$ as in \eqref{defining_function}, some
 $r \geq r_0$ so that  $N_i \llcorner B(p_i,r) \toF N \llcorner B(p, r)$ holds, and appropriate distance-preserving maps $\varphi$ and $\varphi_i$ as above.  Then:
\begin{enumerate}
    \item [(a)] There exists a positive sequence $\{\alpha_i\}$ converging to zero, such that when we define corresponding regions, $K_i$, as in \eqref{eqn_K_i}, there exists a subsequence $N_{i_k}$ such that the corresponding regions $K_{i_k}$ for this subsequence satisfy:
$$\liminf_{k \to \infty} \|T_{i_k}\|(K_{i_k}) \geq \|T\|(K).$$
    \item [(b)] Given the choice $\alpha_i=0$ and the corresponding regions $K_i$, there exists a subsequence $N_{i_k}$ such that the corresponding regions $K_{i_k}$ for this subsequence satisfy:
$$\liminf_{k \to \infty} \|T_{i_k}\|(K_{i_k}) \geq \|T\|(\{u < 0\}).$$
\end{enumerate}
\end{prop}

\begin{proof}
(a). 
For $\delta \in \R$, let $K^\delta = \{u \leq \delta\}$. By \cite[Lemma 27]{JL}, we may pass to a subsequence (keeping the same indexing) such that the restriction of $T_i$ to $K_i^\delta= \{u_i \leq \delta\}$ $\F$-converges to $T \llcorner K^\delta\neq 0$ for almost every $\delta \in \R$. By lower semicontinuity of mass under $\F$-convergence, we have for $\delta \geq 0$:
\begin{equation}
\label{eqn_K_i_delta}
\liminf_{i \to \infty} \|T_i\|(K_i^\delta) \geq \|T\|(K^\delta) \geq \|T\|(K).
\end{equation}
Select $\delta_1 \in (2^{-1}, 2^0)$ for which \eqref{eqn_K_i_delta} holds with $\delta=\delta_1$. Then there exists $i_1 \geq 1$ such that
$$\|T_{i_1}\|(K_{i_1}^{\delta_1}) \geq \|T\|(K) \left(1-2^{-1}\right).$$
Using \eqref{eqn_K_i_delta} repeatedly, we iteratively select $\delta_k \in (2^{-k}, 2^{-k+1})$ and $i_k > i_{k-1}$ such that
$$\|T_{i_k}\|(K_{i_k}^{\delta_k}) \geq \|T\|(K) \left(1-2^{-k}\right).$$
Since $K_{i_k}^{\delta_k}= u_{i_k}^{-1}(-\infty, \delta_{k}]$, the claim follows with $\alpha_{i_k}=\delta_k$.\\
\medskip 
\indent (b) Given $\epsilon>0$, since $\|T\|$ is a Borel measure, we may select $\delta_0< 0$ such that
\begin{equation}
\label{eqn_delta-eps}    
\|T\|(\{u < \delta_0\}) \geq \|T\|(\{u < 0\})-\epsilon.
\end{equation}
As in the proof of (a), by \cite[Lemma 27]{JL}, we may pass to a subsequence (keeping the same indexing) such that the restriction of $T_i$ to $K_i^\delta= \{u_i \leq \delta\}$ $\F$-converges to $T \llcorner K^\delta\neq 0$ for almost all $\delta \in (\delta_0,0)$. By lower semicontinuity of mass, we have
$$\liminf_{i \to \infty} \|T_i\|(K_i^\delta) \geq \|T\|(K^\delta).
$$
However, for $K_i$ defined using the sequence $\alpha_i=0$, $K_i \supseteq K_i^\delta$ and $K^\delta \supseteq \{u < \delta\}\supset \{u < \delta_0\}$, so by \eqref{eqn_delta-eps} 
$$\liminf_{i \to \infty} \|T_i\|(K_i) \geq \|T\|(\{u < 0\})-\epsilon.$$

From this and \eqref{eqn_delta-eps}, the claim follows by letting $\epsilon \searrow 0$ and applying a diagonal argument.
\end{proof}

\begin{ex}\label{rmrk-corresponding}
In the case in which $N_i \to N$ in the pointed $\F$-sense ``with a common space'' (defined below) with respect to $p_i \in X_i$ and $p\in \ol X$, we can give an alternative definition of corresponding regions $K_i$ that do not depend on the choice of ball radius $r_0$ or $r$. That is, there exists a ``uniform'' method to define corresponding regions. 
By ``with a common space'' we mean when there exist a $w^*$-separable Banach space $Y$ and distance-preserving maps  $\varphi_i:X_i \to Y$ and $\varphi:X \to Y$ such that  $\varphi_{i\#}(T_i) \to \varphi_{\#}(T)$ in the local flat topology (as in Definition \ref{def_local_flat}), and for which $\varphi_i(p_i) \to \varphi(p)$. 
For $K \subsetneq X$ 
nonempty and compact and $u:X \to \R$ a defining function for $K$, we still define 
$$u_i = U \circ \varphi_i: X_i \to \R$$ 
(but note that now $u_i$ is defined on the whole $X_i$ and not only on $\set(T_i \llcorner B(p_i,r))$, and let $K_i$ be as in \eqref{eqn_K_i}. We will use this version of corresponding regions in Theorem \ref{thm_sublevel}.
\end{ex}

\subsection{Main results and proofs}

The following two theorems are our main results. The first is simply a restatement of Theorem \ref{thm_balls_intro} from the introduction.

\begin{thm}
\label{thm_balls}
Let $N_i = (X_i, d_i, T_i)$ and $N= (X, d, T)$ be local integral current spaces of dimension $m \geq 2$, such that $N_i \to N$ in the pointed $\mathcal{VF}$ sense with respect to $p_i \in X_i$ and $p \in X$. Suppose for some $r'>0$ that the closed ball $\ol B(p,r')$ in $X$ is compact. Then for all $0<r<r'$,
\begin{equation}
\label{eqn_limsup_balls}
\limsup_{i \to \infty} \capac_{N_i} (\ol B(p_i,r)) \leq \capac_{N}(\ol B(p,r)).
\end{equation}
\end{thm}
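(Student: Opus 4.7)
My plan is to reduce Theorem \ref{thm_balls} to the extrinsic Theorem \ref{thm_extrinsic} by embedding all of the spaces into a common Banach space, as afforded by pointed $\VF$-convergence and Theorem \ref{thm_embedding}. Given $\epsilon > 0$ and a near-optimal Lipschitz competitor $f$ for $\capac_N(\ol B(p,r))$, I would construct competitors $f_i$ for $\capac_{N_i}(\ol B(p_i,r))$ by pulling back a common Lipschitz extension $F : Y \to [0,1]$ of $f$ via the isometric embeddings $\varphi_i$, and show that $E_{N_i}(f_i) \to E_N(f)$.

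Concretely, fix $f \in \LipB(X)$ with $0 \le f \le 1$, $f \equiv 1$ on the open $(2\delta)$-neighborhood of $\ol B(p,r)$ for some $\delta > 0$, and $E_N(f) \le \gamma_m \capac_N(\ol B(p,r)) + \epsilon$; one may additionally arrange $\Lip(f) \ge 1/\delta$ by steepening $f$ near the boundary of its support, at a small energetic cost absorbed into $\epsilon$. Choose $R$ with $\spt f \subseteq B(p, R-1)$. Pointed $\VF$-convergence together with Theorem \ref{thm_embedding} supplies some $R' \ge R$, a $w^*$-separable Banach space $Y$, and isometric embeddings $\varphi_i, \varphi$ of the relevant supports into $Y$ with $\varphi_i(p_i) \to \varphi(p)$ and the push-forwards $\VF$-converging. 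Let $F:Y\to[0,1]$ be the standard Lipschitz extension of $f \circ \varphi^{-1}$, cut off by a Lipschitz bump to maintain bounded support, and set $f_i := F \circ \varphi_i$, extended to all of $X_i$ by standard Lipschitz extension.

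The principal obstacle is the boundary condition $f_i \equiv 1$ on a neighborhood of $\ol B(p_i, r)$, which does not hold a priori since $\varphi_i(\ol B(p_i, r))$ and $\varphi(\ol B(p, r))$ occupy distinct positions in $Y$. In the length-space case, for $x_i \in \ol B(p_i, r + \delta/2)$ and any $z \in X$ with $f(z) < 1$, the triangle inequality in $Y$ yields $d_Y(\varphi(z), \varphi_i(x_i)) \ge \delta - \epsilon_i$ where $\epsilon_i := d_Y(\varphi_i(p_i), \varphi(p)) \to 0$; together with $\Lip(f) \ge 1/\delta$, the infimum formula for the standard Lipschitz extension then forces $F(\varphi_i(x_i)) = 1$, establishing $f_i \equiv 1$ on $\ol B(p_i, r+\delta/2)$ for $i$ large. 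For general metric spaces, an analogous but weaker bound yields only $F(\varphi_i(x_i)) \ge 1 - \eta_i$ with $\eta_i \to 0$, and a mild post-composition $s \mapsto \min(1, s/(1-\eta_i))$ restores the exact boundary condition at the price of a factor $(1-\eta_i)^{-2} \to 1$ in the Dirichlet energy. Finally, Theorem \ref{thm_extrinsic} delivers $\limsup_i E_{N_i}(f_i) \le E_N(f)$, via weak convergence of the push-forward mass measures on bounded sets (Lemma \ref{lemma_mass_convergence}) combined with the good behavior of the tangential differential of $F \circ \varphi_i$ inherited from the fixed Lipschitz function $F$ on $Y$. Letting $\epsilon \to 0$ yields \eqref{eqn_limsup_balls}; I expect the most delicate step to be the non-length-space boundary-value adjustment, which the extrinsic formulation of Theorem \ref{thm_extrinsic} is designed to handle cleanly.
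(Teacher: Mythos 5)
Your high-level plan --- reduce to the extrinsic Theorem~\ref{thm_extrinsic} after embedding into a common Banach space, and fix up the boundary condition on $\ol B(p_i,r)$ via the triangle inequality and the fact that $\varphi_i(p_i)\to\varphi(p)$ --- is on the right track and the boundary-condition calculation you sketch is essentially sound (it in fact works for general metric spaces, not just length spaces, once $\Lip(f)\ge 1/\delta$). But the central step, ``Theorem~\ref{thm_extrinsic} delivers $\limsup_i E_{N_i}(f_i)\le E_N(f)$ for my $f_i := F\circ\varphi_i$,'' has a genuine gap. Theorem~\ref{thm_extrinsic} is not a black box that certifies arbitrary pullbacks: it \emph{constructs its own} competitors $f_i$ by decomposing $\spt f$ (minus a neighborhood of $K$) into finitely many bi-Lipschitz compact pieces $A_\ell$, extending $f|_{A_\ell}$ to disjoint tubular neighborhoods $V_\ell$ with the \emph{locally adapted} Lipschitz constant $c_\ell=\Lip(f|_{A_\ell})$, and then estimating $|d^{S_i}_x f_i|\le c_\ell\approx |d^{S}_{\cdot}f|$ on $V_\ell\cap S_i$ before pushing through weak-$*$ convergence of $\|T_i\|$. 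For your single global Lipschitz extension $F$, the only available pointwise bound is $|d^{S_i}_xF|\le\Lip(F)\approx\Lip(f)$, which gives $E_{N_i}(f_i)\lesssim\Lip(f)^2\,\|T_i\|(\spt f_i)$; this does \emph{not} dominate $E_N(f)$ when $f$ has gradient much smaller than $\Lip(f)$ over most of its support, and the proposed steepening of $f$ (to force $\Lip(f)\ge 1/\delta$) only makes the naive bound worse. The tangential differential of $F$ restricted to $S_i$ is generally not controlled by $|d^S_xf|$ at nearby points of $S$ --- the extension can be steep in directions transverse to $\Tan(S,\cdot)$, and those directions may appear in $\Tan(S_i,\cdot)$.

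The paper avoids this by \emph{not building $f_i$ at all}: it applies Theorem~\ref{thm_extrinsic} with defining function $u=d(K,\cdot)$ and the specific sequence $\alpha_i := d_Y(\varphi_i(p_i),\varphi(p))$, so that the corresponding regions $K_i=U^{-1}(-\infty,\alpha_i]\cap\varphi_i(S_i)$ are metric balls $\ol B_Y(\varphi(p),r+\alpha_i)\cap\varphi_i(S_i)$ in $Y$, which \emph{contain} $\varphi_i(\ol B(p_i,r))$; one then invokes Theorem~\ref{thm_extrinsic} for $\capac_{N_i}(K_i)$ and monotonicity of capacity finishes the job. This replaces your boundary-value fix with a purely metric set-inclusion and lets the extrinsic theorem's internal construction of $f_i$ do all the analytic work. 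You should adopt this route rather than attempt a direct pullback. Also note: your sketch does not address the degenerate case $\ol B(p,r)=X$, which the paper handles separately by building explicit annular cutoffs whose Dirichlet energy controls $\|T_i\|$ of an annulus shrinking to zero under $\VF$-convergence.
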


\begin{remark}
\label{rmk_length_space}
If $(X,d)$ is assumed to be a length space, then \eqref{eqn_limsup_balls} in Theorem \ref{thm_balls} holds for $r=r'$ as well. The only place where the hypothesis that $\ol B(p,r)$ is contained in a strictly larger compact ball will be in verifying that $u(x)= d(x,p)-r$ is a defining function, i.e., satisfies \eqref{defining_function}. But it is elementary to verify that in length spaces, $u(x)= d(x,\ol B(p, r))$ for $x \in X \setminus \ol B(p, r)$.
\end{remark}

In the second main theorem, we rely on Lang--Wenger's compactness theorem \cite{LW} to produce a ``common space'' as in Example \ref{rmrk-corresponding}. So, given a fixed set $K$ in the limit space $X$, we consider a sequence of corresponding regions as in this example.
We note that pointed $\VF$ convergence with a common space implies pointed convergence of the underlying metric measure spaces.

\begin{thm}
\label{thm_sublevel}
Let $N_i = (X_i, d_i, T_i)$ and $N = (X, d, T)$ be  local integral current spaces of dimension $m \geq 2$, such that $N_i \to N$ in the pointed $\mathcal{VF}$ sense with respect to $p_i \in X_i$ and $p \in \overline X$. Assume that for all $r>0$,
\begin{equation}
\label{eqn_bdry_mass_bound}    \sup_{i \in \mathbb N} \|\partial T_i\| (B(p_i,r)) < \infty.
\end{equation}
Let $K \subsetneq X$ be nonempty and compact and $u:X \to \R$ a defining function for $K$. Then, passing to a subsequence of $N_i$ that we do not relabel, 
there exist a $w^*$-separable Banach space $Y$ and distance-preserving maps  $\varphi_i:X_i \to Y$ and $\varphi:X \to Y$ such that  $\varphi_{i\#}(T_i) \to \varphi_{\#}(T)$ in the local flat topology and  $\varphi_i(p_i) \to \varphi(p)$. Defining $u_i = U \circ \varphi_i$, where $U$ is the standard Lipschitz extension of $u \circ \varphi^{-1}$ to $Y$, and letting $K_i \subseteq X_i$ be given as in \eqref{eqn_K_i}, i.e.
\begin{equation*}
K_i = u_i^{-1}(-\infty,\alpha_i],
\end{equation*}
we have
\begin{equation}
\label{eqn_limsup_sublevel}
\limsup_{i \to \infty}\capac_{N_i} (K_i) \leq \capac_{N}(K).
\end{equation}
\end{thm}

\medskip
Note the hypothesis \eqref{eqn_bdry_mass_bound} is trivially satisfied if $\partial T_i=0$ for each $i$, or, more generally, if the boundary masses are uniformly bounded.

\medskip

We will first prove an extrinsic version of these theorems (namely Theorem \ref{thm_extrinsic}), i.e. for a sequence of locally integral currents all on a fixed Banach space. We first list the assumptions needed for this version. 
\begin{assum}\label{assum_extrinsic}
    Let $Y$ be a $w^*$-separable Banach space, and let $T\neq 0$ and $T_i$, $i \in \mathbb N$, be locally integral $m$-dimensional currents on $Y$, $m \geq 2$, such that  
$T_i \to T$ weakly (see Definition \ref{def_local_flat}). Assume that for some (and thus every) $z \in Y$ and every $r > 0$, there exists a bounded open set $V \supseteq B(z,r)$ such that $\|T_i\|(V) \to \|T\|(V)$. 
Let $S=\set(T)$ and $S_i = \set(T_i)$, $i \in \mathbb N$, and 
note that $N_\infty=(S,d_Y,T)$ 
and $N_i=(S_i, d_Y, T_i)$ are
$m$-dimensional local integral current spaces.
Let $K \subsetneq S$ be a nonempty compact set.
Let $u:S \to \R$ be a defining function for $K$ as in \eqref{defining_function},
and let  $U: Y \to \R$  be the standard 1-Lipschitz extension of $u$. 
Let $\{\alpha_i\} $ be a sequence of nonnegative real numbers converging to 0, and define 
\begin{equation}
\label{eqn_K_i_U}
K_i = U^{-1}(-\infty,\alpha_i] \cap S_i.
\end{equation}
Let $f \in \LipB(S)$, with $0 \leq f \leq 1$ and $f \equiv 1$ on a neighborhood $O$ of $K$. Let $\beta \in (0,1]$ be a constant in \eqref{defining_function} for the neighborhood $O$, and let $\Lambda=\beta^{-1}\Lip(f)$. Fix some $z_0 \in Y$. 
Fix $r_0>0$ so that $\spt(f) \subseteq B(z_0,r_0)$ and $r_0 >3\diam(\hat K) + d_Y(K, S \setminus K)$, where $\hat K= K \cup \{z_0\}$.
\end{assum}

Some remarks:
\begin{enumerate}
\item
Generally $d_Y(K, S \setminus K)$ is zero, but it may be positive, e.g. if $K$ is a connected component of $S$. 
\item The $K_i$ defined in \eqref{eqn_K_i_U} are the same as the version of corresponding regions in Example \ref{rmrk-corresponding}, where here the embeddings are simply the inclusion maps.  
\end{enumerate}

We now state Theorem \ref{thm_extrinsic}, whose proof follows many of the ideas in the proof of  \cite[Theorem 6.2]{Por}.

\begin{thm}\label{thm_extrinsic}
Suppose Assumptions \ref{assum_extrinsic} hold. First, each $K_i \subseteq S_i$ is a closed and bounded subset of $S_i$. Second, there exists (for $i$ sufficiently large)  a sequence of functions
\[  
f_i \in \LipB(S_i), \quad 0 \leq f_i \leq 1 , \quad f_i \equiv 1 \textrm{ on a neighborhood of }K_i, 
\]
\[
\Lip(f_i) \leq 1+ 3\Lambda, \quad
\spt(f_i) \subseteq B(z_0,r_0+3),
\]
such that
\begin{equation}
\label{USC_energy}
\limsup_{i \to \infty} E_{N_i}(f_i) \leq E_{N_\infty}(f).
\end{equation}
Third:
\begin{equation}
\label{USC_extrinsic}
\limsup_{i \to \infty} \capac_{N_i}(K_i) \leq \capac_{N_\infty}(K).
\end{equation}
\end{thm}

Towards the proof of Theorem \ref{thm_extrinsic} we first prove some lemmas and propositions.

\begin{lemma}\label{lem-K_ibdd}
Under Assumptions \ref{assum_extrinsic},  $K_i = U^{-1}(-\infty,\alpha_i] \cap S_i$  (as in \eqref{eqn_K_i_U}) is a closed and  bounded subset of $S_i$.
\end{lemma}

\begin{proof}
It is immediate that $K_i$ is closed in $S_i$.

Let $O_1$ be the open $1$-neighborhood of $K$ in $S$. By the definition of defining function, there exists a $\beta_1 \in (0,1]$ such that for all $s \notin O_1$, $d(s, K) \leq \beta_1^{-1} u(s)$.

Choose a constant $\alpha > \sup_i\alpha_i$.
Let $p \in K_i$, so $U(p) \leq \alpha_i$. Then by definition of $U(p)$, there exists $s \in S$ such that $u(s) + d_Y(s,p) \leq \alpha$.  Since $K=\{u \leq 0\}$ is compact, $u$ is bounded below by $-L$ for some $L>0$. Then $d_Y(s,p) \leq \alpha+L$. Note that we also have $u(s) \leq \alpha$.

Note that by the choice of $\beta_1$, we have that for all $s \notin O_1$, $d(s, K) \leq \beta_1^{-1} u(s) \leq \frac{\alpha}{\beta_1}$. Moreover, $d_Y(s,K) < 1$ for all $s \in O_1$.
Let $k \in K$ minimize the distance from $s$ to $K$. Then we obtain:
\begin{align*}
d_Y(p,z_0) &\leq d_Y(p,s) + d_Y(s,k) + d_Y(k,z_0)\\ 
&\leq \alpha + L + \max\left(1, \frac{\alpha}{\beta_1}\right) + \diam(\hat K),
\end{align*}
a finite number independent of $p$. Thus $K_i$ is bounded.
\end{proof}

Proposition \ref{prop-Energy} below is a weaker version of Theorem \ref{thm_extrinsic} that will ultimately be used to prove the latter. 

\begin{prop}\label{prop-Energy}
Suppose Assumptions \ref{assum_extrinsic} hold, 
and let $\epsilon > 0$.  
There exists (for $i$ sufficiently large)  a sequence of functions  $f_i^\epsilon \in \LipB(S_i)$, $0 \leq f_i^\epsilon \leq 1$, with $f_i^\epsilon \equiv 1$ on a neighborhood of $K_i$ (for $i$ sufficiently large, depending on $\epsilon$) such that 
\[ 
\Lip(f_i^\epsilon) \leq 1+ 3\Lambda, \quad \spt(f_i^\epsilon) \subseteq B(z_0,r_0+3)
\]
and 
\begin{equation}
\label{USC_energy_with_eps}
\limsup_{i \to \infty} E_{N_i}(f_i^\epsilon) \leq E_{N_\infty}(f) + \epsilon.
\end{equation}
\end{prop}

 In order to prove Proposition \ref{prop-Energy}, we will first state and prove Lemmas \ref{lem-extensionf}--\ref{lemma_extension}.
For clarity of notation, we drop the superindex $\epsilon$ in what follows.

\begin{lemma}\label{lem-extensionf}
Under Assumptions \ref{assum_extrinsic}, 
for $\gamma>0$ sufficiently small, the extension of $f$ from $S$ to $S \cup U^{-1}(-\infty,\gamma]$, defined as 1 in $U^{-1}(-\infty,\gamma] \setminus S$, is a Lipschitz function, bounded between 0 and 1, with Lipschitz constant $\leq 2\Lambda$. 
(We will also call the extension $f$, so now $\Lip(f) \leq 2\Lambda$.) 
\end{lemma}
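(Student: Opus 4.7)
The plan is to choose $\gamma$ small enough that the closed tubular neighborhood $\{x \in S : u(x) \leq 2\gamma\}$ is already contained in the open set $O \subseteq S$ on which $f \equiv 1$, and then verify the Lipschitz bound by case analysis. The existence of such a threshold $\gamma_0 > 0$ follows from compactness of $K = \{u \leq 0\}$: if not, one extracts $x_n \in S \setminus O$ with $u(x_n) = d_Y(x_n, K) \to 0$ and $k_n \in K$ with $d_Y(x_n, k_n) \to 0$; a subsequence of $k_n$ converges in $Y$ to some $k \in K$ by compactness, forcing $x_n \to k$ in $S$, hence $k \in S \setminus O$ (which is closed in $S$), contradicting $K \subseteq O$.

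Fix such a $\gamma_0$, let $\gamma \in (0, \gamma_0]$, and let $\tilde{f}$ denote the extension ($f$ on $S$, $1$ on $U^{-1}(-\infty,\gamma] \setminus S$); note $\tilde{f}$ is well-defined and takes values in $[0,1]$. For two points $a, b$ in its domain I would verify $|\tilde{f}(a) - \tilde{f}(b)| \leq 2\Lambda d_Y(a, b)$. If both lie in $S$ this is just the $\Lambda$-Lipschitz bound on $f$, and if both lie outside $S$ both values equal $1$. The only substantial case is $a \in S$, $b \in U^{-1}(-\infty,\gamma] \setminus S$, which I would split according to whether $d_Y(a, b) \leq \gamma_0$ or not. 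In the close subcase, the 1-Lipschitz property of $U$ together with $U|_S = u$ gives $u(a) \leq U(b) + d_Y(a, b) \leq \gamma + \gamma_0 \leq 2\gamma_0$, so $a$ lies in $O$ by the choice of $\gamma_0$ and both $\tilde{f}$-values equal $1$. In the far subcase $d_Y(a, b) > \gamma_0 \geq \gamma$; if $a \in K$ both values again equal $1$, and otherwise $u(a) > 0$, so picking $k \in K$ realizing $d_Y(a, K) = u(a)$ (which exists by compactness of $K$) yields $|f(a) - 1| = |f(a) - f(k)| \leq \Lambda u(a) \leq \Lambda(\gamma + d_Y(a, b)) < 2\Lambda d_Y(a, b)$.

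The main obstacle, which is organizational rather than analytic, is recognizing that the correct split in the mixed case is not on the value of $u(a)$ itself but on $d_Y(a, b)$ relative to the fixed threshold $\gamma_0$. This split lets the close subcase be absorbed entirely into the neighborhood $O$ via the Lipschitz extension $U$, and lets the stray term $\Lambda \gamma$ in the far subcase be absorbed into $\Lambda d_Y(a, b)$ to produce the claimed factor of $2$. Everything else is a routine verification using the 1-Lipschitz property of $U$, the fact that $U|_S = u$, and the Lipschitz estimate for $f$ against any point of $K$.
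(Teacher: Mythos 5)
Your proof is correct, and it follows the same high-level plan as the paper: extend $f$ by $1$, dispatch the easy cases, and in the mixed case $a \in S$, $b \in U^{-1}(-\infty,\gamma]\setminus S$ control $u(a)$ via the $1$-Lipschitz property of $U$ and the estimate $|f(a) - 1| \le \Lambda\, u(a)$. The execution of the key step differs, though. The paper fixes $\gamma$ so that $K^{3\gamma}\subset O$ and, when $a \in S\setminus O$, extracts from the definition of $U(b)\le\gamma$ an auxiliary point $s\in S$ with $u(s)+d_Y(s,b)<2\gamma$, then combines $u(a) > 3\gamma$ with the triangle inequality to prove $\gamma < d_Y(a,b)$ outright; it also uses an $\eta$-approximate nearest point of $K$ rather than an exact one. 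You instead fix a threshold $\gamma_0$ with $\{u\le 2\gamma_0\}\subset O$, take $\gamma\le\gamma_0$, and split the mixed case on whether $d_Y(a,b)\le\gamma_0$: in the close regime you push $a$ into $O$ outright via $u(a)\le \gamma+d_Y(a,b)\le 2\gamma_0$, so both values are $1$, and in the far regime $\gamma\le\gamma_0<d_Y(a,b)$ absorbs the stray $\Lambda\gamma$ term, using an exact nearest point of $K$ (valid because $K$ is compact). Your version avoids the auxiliary point $s$ entirely and makes the role of the threshold more transparent; it also supplies the explicit compactness argument for the existence of $\gamma_0$, which the paper asserts without proof. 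Both yield the same constant $2\Lambda$.
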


 Please refer to Figure \ref{fig1}  for an illustration of some aspects of this setup.

 \begin{figure}
    \centering
    \includegraphics[scale=0.7]{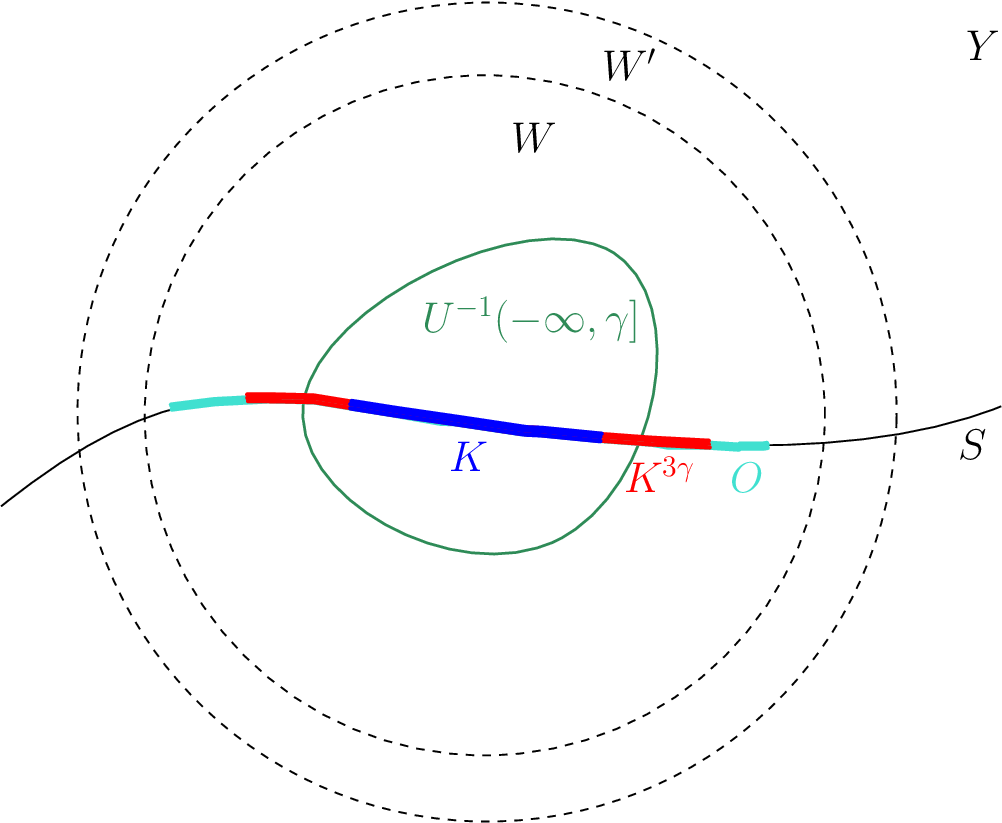}
    \caption{The sets $K \subset S =\set(T) \subset Y$ are defined in Theorem \ref{thm_extrinsic}. The function $f: S \to \R$ from Proposition \ref{prop-Energy} is identically $1$ on $O \subset S$ which contains $K$. In Lemma \ref{lem-extensionf}, $\gamma>0$ is chosen so that $K \subseteq K^{3\gamma} \subseteq O$, and $f$ is extended as a constant $1$ to  $U^{-1}(-\infty, \gamma] \subset Y$. In Lemma \ref{lem:Kdeltas}, we introduce open balls 
    $W \subset W'\subset Y$.}
    \label{fig1}
\end{figure}

\begin{proof}
Recall from Assumptions \ref{assum_extrinsic} that $O \subset S$ denotes a fixed neighborhood of $K$ on which $f \equiv 1$, and $\beta \in (0,1]$ is fixed such that $u(x) \geq \beta d(x,K)$ for $x \in S \setminus O$. Since $K$ is compact and $S \setminus O$ is closed, we then see $u(x)$ is bounded below by a positive constant on $S \setminus O$. In particular, we can choose $\gamma>0$ small enough to ensure $K^{3\gamma}=u^{-1}(-\infty,3\gamma] \subseteq O$.

The extension $f:S \cup U^{-1}(-\infty,\gamma] \to \R $ obviously satisfies $0 \leq f \leq 1$. We now prove it is Lipschitz. Let $p, q \in S \cup U^{-1}(-\infty,\gamma]$. If $p,q \in S$, then $|f(p)-f(q)|$ is bounded above by $\Lip(f|_S) d_Y(p,q)\leq\Lambda d_Y(p,q)$. If $p,q \in (U^{-1}(-\infty,\gamma]\setminus S)\cup O$, then $f(p) - f(q) = 1 - 1 = 0$. 
Therefore, the only case left to consider is that in which $p \in (U^{-1}(-\infty,\gamma] \setminus S)\cup O$ and $q \in S$. In fact, since $O \subset S$ we may assume that $p \in U^{-1}(-\infty, \gamma] \setminus S$ and $q \in S \setminus O$.

By the definition of defining function, since $q \in S \setminus O$, we have $u(q) \geq \beta d(q,K)$. Since $K$ is compact, there exists $k \in K$ such that $d(q,k) = d(q,K)$. Thus,
$$d_Y(k,q) \leq \beta^{-1} u(q).$$

Then since $f(p)=1=f(k)$ and $f|_{S}$ is Lipschitz, we obtain:
\begin{align*}
|f(p) - f(q)| &= |f(k) - f(q)|\\
&\leq\Lip(f|_S) d_Y(k, q)\\
&\leq \Lip(f|_S) \beta^{-1} u(q) = \Lambda u(q).
\end{align*}

We now bound $u(q)=  U(q)$ in terms of $d_Y(p,q)$. Since $U$ is 1-Lipschitz, we have
$$U(q) - U(p) \leq |U(q)-U(p)| \leq d_Y(p,q),$$
so that
$$u(q) \leq d_Y(p,q) + \gamma.$$
We now show that $\gamma< d_Y(p,q)$.  From the definition of $U(p)\leq \gamma$, there exists $s \in S$ such that
\begin{equation}
\label{eqn_2c}
u(s) + d_Y(s,p) < 2\gamma.
\end{equation}
Since $q \in S \setminus O$, we have $u(q)>3\gamma$, and thus 
$$3\gamma - u(s) < u(q) - u(s) \leq |u(q)-u(s)| \leq d_Y(q,s) \leq d_Y(p,q) + d_Y(s,p) < d_Y(p,q) + 2\gamma - u(s),$$
having used that $u$ is 1-Lipschitz and \eqref{eqn_2c} to bound $d_Y(s,p)$. From this, it follows that $\gamma< d_Y(p,q)$.  Thus, 
$$|f(p) - f(q)| \leq \Lambda(d_Y(p,q) + \gamma) \leq  2\Lambda d_Y(p,q).$$
This completes the proof.
\end{proof}

 The following technical lemma will allow us to construct many of the objects used in the proof of Proposition \ref{prop-Energy}. See again Figure \ref{fig1}.

\begin{lemma}\label{lem:Kdeltas} 
Let $\epsilon_1>0$ be given. Under Assumptions \ref{assum_extrinsic}, there exist open balls $W, W' \subset Y$ about $z_0$ of radii in $(r_0, r_0+1)$ and  $(r_0+2, r_0+3)$, respectively, and $\delta>0$ sufficiently small, so that, letting  $S'=S\cap W'$,
\begin{equation}
\label{eqn_bdry_W'}
T \llcorner S' \in \Im(Y),   \quad \|T\|(\partial W)=0, \quad \;\; \|T\|(\partial W')=0,
\end{equation}
$U^{-1}(-\infty, \delta] \subseteq W$, and 
$K^\delta=u^{-1}(-\infty, \delta]$ satisfies
\begin{equation}\label{eq-epsilon2}
\|T\|(K^\delta \setminus K) < \epsilon_2, \ \quad T \llcorner K^\delta \in \Im(Y), \quad \text{ and } \;\; T \llcorner (S' \setminus K^\delta) \in \Im(Y),
\end{equation}
where $\epsilon_2 = (1+3\Lambda)^{-2} \epsilon_1$.
\end{lemma}

\begin{proof}
    We note that
since $\|T\|$ is a Borel measure that is finite on bounded open sets, it follows that $\|T\|$ is zero on almost all metric spheres about a given point.  Choose an open ball $W \subset Y$ about $z_0$ of radius in $(r_0,r_0+1)$, with the radius chosen so that $\|T\|(\partial W)=0$.

We claim that for $\delta>0$ sufficiently small, $U^{-1}(-\infty, \delta] \subseteq W$.
The proof is similar to the proof of each $K_i$ being bounded: Let $p \in U^{-1}(-\infty,\delta]$, i.e., $U(p) \leq \delta$. Then given $\eta > 0$, there exists $s \in S$ such that $u(s)+d_Y(s,p)\leq  \delta+ \eta$. In particular, $u(s) \leq \delta+\eta.$ 

We consider two cases according to the sign of $u(s)$. If $u(s) \geq 0$,  we obtain that $d_Y(s,p)\leq  \delta+ \eta$.
Taking $k \in K$ to minimize the distance to $s$, we obtain by the triangle inequality
\begin{align*}
d_Y(z_0,p) &\leq d_Y(z_0,k) + d_Y(k,s) + d_Y(s,p)\\
&\leq \diam(\hat K) + d_Y(k,s) + \delta + \eta.
\end{align*} 
To estimate the middle term, let $O_1$ be the open 1-neighborhood of $K$ in $S$, and let $\beta_1 \in (0,1]$ be a constant guaranteed by the definition of defining function, \eqref{defining_function}, for the open set $O_1$. If $s \in O_1$, then $d_Y(k,s) < 1$; if $s \not \in O_1$, then $d_Y(k,s) = d(K,s) \leq \beta_1^{-1} u(s) \leq \frac{\delta+\eta}{\beta_1}.$
In particular, choosing $\delta$ and $\eta$ sufficiently small, we have  $$d_Y(z_0,p) \leq \diam(\hat K) + 1 < r_0,$$
so $p \in W$ in this case.

In the other case, if $u(s) < 0$, then $s \in K$ and so $d_Y(z_0,s) \leq \diam(\hat K)$. We also have 
$d_Y(s,p) \leq \delta + \eta + |u(s)|$. Let $q \in K$ and $q' \in S\setminus K$ achieve the minimum distance between $K$ and $S\setminus K$, within $\eta$:
$$d_Y(q,q') \leq d_Y(K, S \setminus K) + \eta.$$
Since $u(q') > 0$, 
$$|u(s)| \leq |u(q') - u(s)| \leq d_Y(q', s) \leq d_Y(q',q) + d_Y(q,s) \leq d_Y(K, S \setminus K) + \eta + \diam(K).$$
Thus,
$$d_Y(s,p) \leq \delta + \eta + |u(s)| \leq \diam(K) + d_Y(K,S \setminus K) + \delta+2\eta.$$
Again, by the triangle inequality and choosing $\delta$ and $\eta$ sufficiently small, we obtain 
$$d_Y(z_0,p) \leq d_Y(z_0,s) + d_Y(s,p) \leq \diam(\hat K)  + \diam(K) + d_Y(K,S \setminus K)  + \delta+2\eta < r_0,$$ 
by Assumptions \ref{assum_extrinsic},
so $p \in W$. It follows that $U^{-1}(-\infty, \delta] \subseteq W$. 

\medskip

We now choose $\delta >0$ as small as needed so that $U^{-1}(-\infty, \delta] \subset W$ and that $K^\delta$
satisfies
\begin{equation*}
\|T\|(K^\delta \setminus K) < \epsilon_2 \quad \text{ and } \quad T \llcorner K^\delta \in \Im(Y),
\end{equation*}
where $\epsilon_2 = (1+3\Lambda)^{-2} \epsilon_1$
(cf. \cite[Lemma 24]{JL}, using Sormani's argument in \cite[Lemma 2.34]{Sor}).  

\medskip

Now fix another open ball $W' \subset Y$, centered about $z_0$, of radius in $(r_0+2,r_0+3)$, so that in particular $\ol{W} \subset W'$. Let $S'=S\cap W'$; adjusting the radius if necessary, 
we may ensure $T \llcorner S' \in \Im(Y)$ and 
\begin{equation*}
\|T\|(\partial W')=0.
\end{equation*}
So $T \llcorner (S' \setminus K^\delta) \in \Im(Y)$ as well.
\end{proof}

Since the canonical set of an integral current is countably rectifiable and by the properties of the metric differential of Lipschitz functions, Portegies obtained good bi-Lipschitz charts (Lemmas 3.2 and 6.1 of \cite{Por}) for integral currents and Lipschitz functions defined on their support. We now apply these results to $T \llcorner (S' \setminus K^\delta)$ and the restriction of $f$ to $S' \setminus K^\delta$.
 Refer to Figures \ref{fig1} 
  and \ref{fig2} for an illustration of some aspects of this setup.

  \begin{figure}
    \centering
    \includegraphics[scale=0.7]{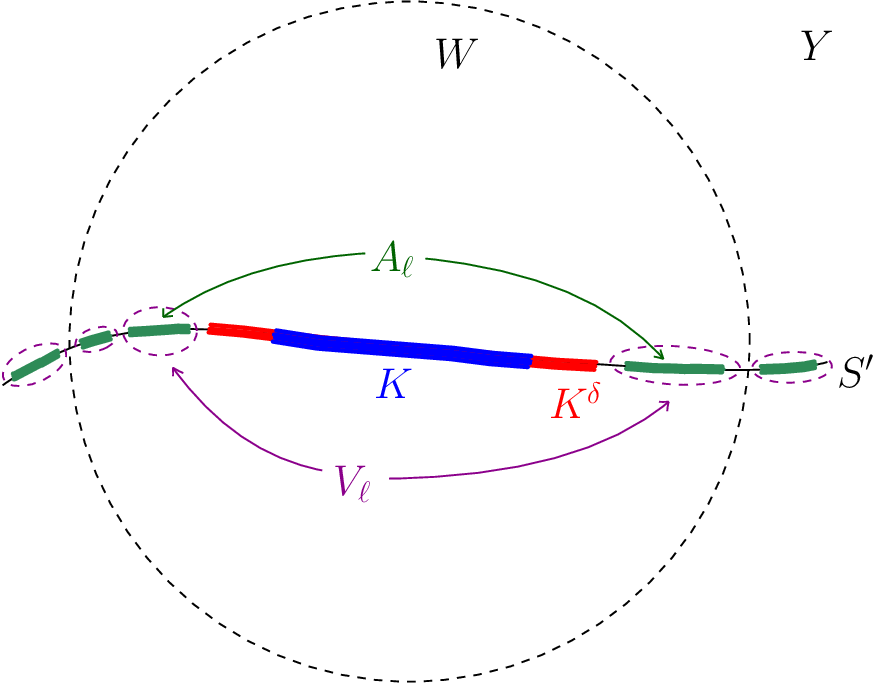}
    \caption{The finitely many sets $A_\ell$ are subsets of $S'\setminus K^\delta$ and are defined in Lemma \ref{lem:setsAandV}. They cover all of $S'\setminus K^\delta$ except for $\|T\|$-measure $< \epsilon_4$. The $V_\ell \subset Y$ are disjoint neighborhoods of the $A_\ell$, each of which lies in $W$ or $Y \setminus \ol W$. The larger ball $W'$ is not pictured, but $S' = S \cap W'$.
    } 
    \label{fig2}
\end{figure}

\medskip

\begin{lemma}\label{lem:setsAandV}
Let $\epsilon_1>0$ be given. Suppose Assumptions \ref{assum_extrinsic} hold. 
Let  $W, W' \subset Y$ be balls about $z_0$ of radius in $(r_0, r_0+1)$ and $(r_0+2, r_0+3)$, respectively,  and $\delta>0$ be given as in Lemma \ref{lem:Kdeltas}. Then there exists a finite collection of compact sets $A_{\ell} \subset S' \setminus K^\delta$ for $\ell=1,\ldots, N$ such that:
\begin{itemize}
\item each $A_\ell$ is the bi-Lipschitz image of a compact subset of $\R^m$,
\item each $A_\ell$ is either a subset of $W$ or of $Y \setminus \ol W$,
\item the $A_{\ell}$ are pairwise disjoint with minimum pairwise distance $a>0$,
\item for any $\gamma \in (0,\frac{\delta}{2})$, we have $d_Y(A_{\ell}, U^{-1}(-\infty, \gamma])>\delta/2$ for all $\ell$, 
\item for all $x \in A_\ell$ 
 \begin{equation}\label{eq-epsilon3}
0< (c_\ell)^2 - \epsilon_3 \leq |d_x^{S} f|^2 \leq (c_\ell)^2,
 \end{equation}
where $c_\ell = \Lip(f|_{A_\ell})$ and $\epsilon_3 = \epsilon_1 \left(\|T\|(W')\right)^{-1} > 0$, and 
\item letting $\cup_\ell$ denote $\cup_{\ell=1}^N$ henceforth,
\begin{equation}\label{eq-epsilon4}
\|T\|((S'\setminus K^\delta) \setminus \cup_\ell A_\ell) < \epsilon_4,
\end{equation}
where $\epsilon_4 = (1+3\Lambda)^{-2} \epsilon_1$.
\end{itemize}
Furthermore, there exists $b>0$ sufficiently small so that
the open $b/10$-neighborhoods of each $A_{\ell}$ in $Y$, denoted $V_{\ell}$, satisfy:
 \begin{itemize}
 \item each  $V_{\ell}$ is a subset of $W'$,
\item each $V_\ell$ is either a subset of $W$ or of $Y \setminus \ol W$,
\item the $V_{\ell}$ are pairwise disjoint, 
\item $d_Y(V_{\ell}, U^{-1}(-\infty, \gamma])>9b/10$ for all $\ell$, and
\item for $\epsilon_5=\Lambda^{-2} \epsilon_1$,
\begin{equation}
\label{eqn_V_A}
 \|T\|\left(\cup_\ell \ol V_\ell \setminus \cup A_{\ell}\right) < \epsilon_5.
\end{equation} 
(If $\Lambda=0$,  we take $\epsilon_5=1$.)
\end{itemize}
 \end{lemma}

\medskip 

\begin{proof}
Apply Lemmas 3.2 and 6.1 of \cite{Por} to the integral $m$-current $T \llcorner (S' \setminus K^\delta)$ and the restriction of $f$ to $S' \setminus K^\delta$ with  $\epsilon_3 = \epsilon_1 \left(\|T\|(W')\right)^{-1} > 0$, thereby obtaining a sequence of compact sets $A_{\ell} \subset S' \setminus K^\delta$ for $\ell=1,2,\ldots$ such that:
\begin{itemize}
\item Each $A_\ell$ is the bi-Lipschitz image of a compact subset of $\R^m$.
\item The $A_{\ell}$ are pairwise disjoint. 
\item $\cup_{\ell=1}^\infty A_\ell$ has zero co-measure in $S'\setminus K^\delta$ with respect to $\|T\|$.
\item For all $x \in A_\ell$ 
 \begin{equation*}
 0<(c_\ell)^2 - \epsilon_3 \leq |d_x^{S} f|^2 \leq (c_\ell)^2,
 \end{equation*}
where $c_\ell = \Lip(f|_{A_\ell})$. 
\end{itemize}
Since $\|T\|$ is Borel regular and $\|T\|(\partial W)=0$,
we may assume without loss of generality that each $A_\ell$ is either a subset of $W$ or of $Y \setminus \ol W$.

\medskip
Now, 
choose a finite subset of $\{A_{\ell}\}$, call it $A_1, \ldots, A_N$, such that
\begin{equation*}
\|T\|((S'\setminus K^\delta) \setminus \cup_\ell A_\ell) < \epsilon_4,
\end{equation*}
where, for the rest of this proof, $\cup_\ell$ will denote $\cup_{\ell=1}^N$. Let $\gamma \in (0, \frac{\delta}{2})$.
We claim that the distance from any  $A_{\ell}$ to  $U^{-1}(-\infty,\gamma]$ in $Y$ is at least $\delta-\gamma > \frac{\delta}{2}$. 
Let $q \in A_{\ell}$. Since $A_{\ell}$ is disjoint from $K^\delta$ we have $U(q) = u(q) > \delta$. Then since $U$ is 1-Lipschitz,
 if $z \in U^{-1}(-\infty,\gamma]$,
$$d_Y(q,z) \geq |U(q) - U(z)| > \delta-\gamma>\frac{\delta}{2},$$
which proves the claim.   
 
 We know that these finitely many $A_\ell$ are pairwise disjoint and compact; so let $a>0$ be the minimum pairwise distance between them. Let $b$ be a positive real number less than $\min \{a, \frac{\delta}{2}\}>0$  and $V_{\ell}$ be  the open $b/10$-neighborhood of $A_{\ell}$ in $Y$, so $V_1, \ldots, V_N$ are pairwise disjoint
 and their distance to  $U^{-1}(-\infty,\gamma]$   is greater than $9b/10$. Since the $A_{\ell}$ are compact subsets of $W'$, we may shrink $b>0$ if necessary to ensure the $V_{\ell}$ are subsets of $W'$. We can also shrink $b>0$ again to guarantee each $V_\ell$ is either a subset of $W$ or of $Y \setminus \ol W$. Furthermore, since $\|T\|$ is regular, we may shrink $b>0$ if necessary to ensure 
 \begin{equation*}
 \|T\|\left(\cup_\ell \ol V_\ell \setminus \cup A_{\ell}\right) < \epsilon_5.\qedhere
\end{equation*} 
\end{proof}

\medskip

Following as in (6.14)--(6.17) of \cite{Por}, we
are ready to define functions $f_i$ that satisfy the conditions of Proposition \ref{prop-Energy}.

\begin{lemma}
\label{lemma_extension}
Let $\epsilon_1>0$ be given.
Suppose Assumptions \ref{assum_extrinsic} hold.
 Let  $W, W' \subset Y$ be balls about $z_0$ of radius in $(r_0, r_0+1)$ and $(r_0+2, r_0+3)$, respectively,  and $\delta>0$ be given as in Lemma \ref{lem:Kdeltas} (using the given value of $\epsilon_1$). 
 Let $0< \gamma< \delta/2$ be sufficiently small so that Lemma \ref{lem-extensionf} holds
for this choice of $\gamma$ and fixed $f$, i.e., there exists an extension of $f$ by 1 to $S \cup U^{-1}(-\infty,\gamma]$ with Lipschitz constant $\leq 2\Lambda$.  Take $a>0$, $b>0$, and sets $A_\ell$ and $V_\ell$ as in Lemma \ref{lem:setsAandV}. Then there exists a Lipschitz function $\hat f: Y \to \R$ such that
\begin{enumerate}[(a)]
\item $\Lip(\hat f) \leq 1+3\Lambda$,
\item $0 \leq \hat f \leq 1$,
\item $\hat f$ agrees with $f$ on $\cup_\ell A_\ell$ and on $U^{-1}(-\infty,\gamma]$ (in particular, $\hat f|_{U^{-1}(-\infty,\gamma]} = 1$), 
\item $\Lip(\hat f|_{V_\ell}) \leq \Lip(f|_{A_\ell})$ for $\ell=1,\ldots, N$,
\item $\hat f \equiv 1$ on a neighborhood of $K_i$ for all $i$ sufficiently large, and
\item $\spt(\hat f) \subseteq \ol{W'} \subseteq B(z_0,r_0+3)$.
\end{enumerate}
In particular, the functions \[
f_i:=\hat f|_{S_i}: S_i \to \R
\]
are Lipschitz, bounded, have bounded support in $B(z_0,r_0+3)$, and equal $1$ on a neighborhood of $K_i$ (for all $i$ sufficiently large), i.e., $f_i$ is an allowable test function for $\capac_{N_i}(K_i)$.
\end{lemma}

Again, refer to Figures \ref{fig1} and \ref{fig2} for an illustration of some aspects of this setup.

Note that in Lemma \ref{lem-extensionf} we extend $f: S \to \R$ to a function $f: S \cup U^{-1}(-\infty, \gamma] \to \R$. We now construct  $\hat f$ by prescribing its values on $W' \setminus U^{-1}(-\infty, \gamma]$ and extending by zero outside $W'$. We remark that $\hat f$ will not generally be an extension of $f$: the two may differ on $\left( \cup V_\ell \setminus \cup A_{\ell}\right) \cap S$.

\begin{proof}
First, we define for each $\ell$ the standard Lipschitz extension of $f|_{A_\ell}$ to $V_\ell$:
\begin{equation*}
f^\ell(x) = \inf_{a\in A_\ell} \left(f(a) + c_\ell d_Y(a,x)\right),\qquad x \in V_\ell,
\end{equation*}
where, again, $c_\ell= \Lip(f|_{A_\ell})$. Note that $\Lip(f^\ell) = c_\ell$. 
Truncate these functions by defining $\hat{f}^\ell :=  \min\{ f^\ell, 1 \}$, and note  $\Lip(\hat{f}^\ell) \leq c_\ell$ and $0 \leq \hat f^\ell \leq 1$ (recalling $0 \leq f \leq 1$). 

Subsequently, define the function $\hat{f}_1:  \left( \cup_\ell V_\ell\right)\cup U^{-1}(-\infty,\gamma]  \to \R$ by
\begin{equation*}
\hat{f}_1(x) = 
\begin{cases}
\hat{f}^\ell(x)   & \text{ if } x \in V_\ell \\
1  & \text{ if } x \in U^{-1}(-\infty, \gamma],
\end{cases}
\end{equation*}
which satisfies $0 \leq \hat f_1 \leq 1$.

Let us prove that $\Lip(\hat{f}_1) \leq 3\Lambda$. There are only two nontrivial cases. First, if $x\in V_{\ell_1}$, $y \in V_{\ell_2}$, $\ell_1 \neq \ell_2$, there exist $x_0 \in A_{\ell_1}$ and $y_0 \in A_{\ell_2}$ such that $d_Y(x,x_0) < \frac{b}{10}$ and $d_Y(y,y_0) < \frac{b}{10}$. By the triangle inequality, since $d_Y(x_0,y_0) \geq a$, we find $d_Y(x,y) \geq \tfrac{4}{5} b$.
Therefore 
\begin{equation*}
\begin{split}
|\hat{f}_1(x) - \hat{f}_1(y)| &\leq |\hat{f}_1(x) - \hat{f}_1(x_0)| + |\hat{f}_1(x_0) - \hat{f}_1(y_0)| + |\hat{f}_1(y_0) - \hat{f}_1(y)| \\
&\leq c_{\ell_1} d_Y(x,x_0) + \Lip(f) d_Y(x_0, y_0) + c_{\ell_2} d_Y(y_0, y) \\
&\leq \Lip(f)\left(  d_Y(x,x_0) + d_Y(x_0 , y_0) +  d_Y(y_0 , y) \right) \\
&\leq \Lip(f)\left(  \frac{b}{10} + d_Y(x_0 , x) + d_Y(x , y) + d_Y(y , y_0) +  \frac{b}{10} \right) \\
&\leq \Lip(f)\left(  \frac{4b}{10} + d_Y(x , y) \right) \\&\leq \Lip(f) \left( \frac{3}{2} d_Y(x,y) \right)\\
&\leq 3\Lambda d_Y(x , y).
\end{split}
\end{equation*}

Second, assume that $x\in V_{\ell}$, $y \in  U^{-1}(-\infty,\gamma] $, so $d_Y(x,y) \geq \frac{9b}{10}$.  There exists $x_0 \in A_{\ell}$ such that $d_Y(x,x_0) < \frac{b}{10}$ and therefore
\begin{equation*}
\begin{split}
|\hat{f}_1(x) - \hat{f}_1(y)| &\leq |\hat{f}_1(x) - \hat{f}_1(x_0)| + |\hat{f}_1(x_0) - \hat{f}_1(y)|  \\
&\leq c_{\ell_1} d_Y(x, x_0) + \Lip(f) d_Y(x_0, y)  \\
&\leq \Lip(f)\left(  d_Y(x,x_0) + d_Y(x_0, y)  \right) \\
&\leq \Lip(f)\left(  d_Y(x,x_0) + d_Y(x_0, x) + d_Y(x, y) \right) \\
&\leq \Lip(f)\left(  \frac{2b}{10} + d_Y(x, y) \right) \\
&\leq \Lip(f)\left( \frac{11}{9} d_Y(x,y)  \right) \\
&\leq 3\Lambda  d_Y(x,y).\\
\end{split}
\end{equation*}

Consequently,  $\hat{f}_1:  \left( \cup_\ell V_\ell\right)\cup U^{-1}(-\infty,\gamma]  \to \R$ has Lipschitz constant at most $3\Lambda$. We extend $\hat{f}_1$ to a Lipschitz function, denoted in the same way, on all $Y$ in the standard way (with the same Lipschitz constant): 
$$\hat f_1(y) := \inf_{x\in  \left( \cup_\ell V_\ell\right)\cup U^{-1}(-\infty,\gamma] } \left(\hat f_1(x) + \Lip(\hat f_1) d_Y(x,y)\right),$$
truncate at the value $1$, and call the result $\hat f_2$. Note $\Lip(\hat f_2) = \Lip (\hat f_1)\leq 3\Lambda$.

However, $\hat f_2$ will not have bounded support, so we will modify $\hat f_2$ using a cutoff function while ensuring claims (a)--(f) will hold. Let $0\leq \rho \leq 1$ be a Lipschitz function on $Y$ that is identically one on $\ol W$ and is supported in $W'$. Since the radii of $W$ and $W'$ differ by more than 1, we may assume without loss of generality that
\begin{equation}
\label{eqn_MLip}
\Lip(\rho) \leq 1.
\end{equation}

We claim  $\hat f =\rho \hat f_2$ is the desired function. The function $\hat f$ clearly has bounded support in $\ol{W'}$ and is bounded between 0 and 1, i.e. (b) and (f) hold. We see $\hat f$ is Lipschitz as well:
\begin{align*} 
| \hat f(x) -  \hat f(y)  |  = &
| \rho(x) \hat f_2(x) -   \rho(y) \hat f_2(y)  | \\
\leq  &  | \rho (x) -   \rho (y) |  |  \hat f_2(x) |  +  | \hat f_2(x) - \hat f_2(y)  |  |  \rho (y)  | \\
\leq  &  \left(  \Lip(\rho) +   \Lip(\hat f_2) \right) d_Y(x , y )\\
\leq &   \left(1 +   3\Lambda \right) d_Y(x , y ),
\end{align*}
having used \eqref{eqn_MLip}. That is, (a) holds. 

Next, we show (c): first suppose $x \in A_\ell$. If $A_\ell \subset W$, then $\rho(x)=1$ and it follows that
\[
\hat f(x)= \rho(x)\hat f_2(x)=f^\ell(x)=f(x).
\]
Otherwise, $A_\ell \subset Y \setminus \ol W$, which implies $f|_{A_\ell}=0$, as $W$ contains the support of $f$. Then $c_\ell=0$, so $\hat f_2(x)=f^\ell(x)=0$. Then $\hat f=\rho \hat f_2$ and $f$ both vanish on $A_\ell$. Next, suppose $x \in U^{-1}(-\infty,\gamma]$. Since $W \supset U^{-1}(-\infty,\gamma]$ and $\rho \equiv 1$ on $W$, it follows that $\hat f(x)=\rho(x)\hat f_2(x)=1=f(x)$.

We show (d): If $V_\ell$ is a subset of $W$, then $\rho \equiv 1$ on $V_\ell$, so $\hat f|_{V_{\ell}}=\rho \hat f_2|_{V_{\ell}} = \hat f_2|_{V_\ell} = f^\ell$, whose Lipschitz constant is at most $c_\ell = \Lip(f|_{A_\ell})$. On the other hand if $V_\ell$ is a subset of $Y \setminus \ol W$, then $\rho \hat f_2|_{V_{\ell}}=0$. But since $A_\ell \subset Y \setminus \ol W$ and $\spt(f) \subseteq W$, we have $f|_{A_\ell}=0$, which implies $f^\ell=0$. So in this case as well, $\Lip(\hat f|_{V_{\ell}}) \leq \Lip(f|_{A_\ell})$ (both are zero).

To show (e), restrict to $i$ sufficiently large so that $\alpha_i < \gamma$. Then $U^{-1}(-\infty, \alpha_i] \subset U^{-1}(-\infty, \gamma)$. By definition $\hat f_2 \equiv 1$ on $U^{-1}(-\infty, \gamma]$. Since $U^{-1}(-\infty, \gamma] \subset W$ and $\rho \equiv 1$ on $W$, it follows that $\rho \hat f_2 \equiv 1$ on $U^{-1}(-\infty, \gamma]$ as well. In particular, $\hat f=\rho \hat f_2$ is identically 1 on the neighborhood $U^{-1}(-\infty, \gamma)$ of $K_i=U^{-1}(-\infty, a_i] \cap S_i$, i.e. (e) holds.

Thus $\hat f=\rho \hat f_2$ satisfies (a)--(f).
\end{proof}

We now prove Proposition \ref{prop-Energy} by establishing the energy estimate \eqref{USC_energy_with_eps}, which will later be sufficient to obtain  Theorem \ref{thm_extrinsic}.

\begin{proof}[Proof of Proposition \ref{prop-Energy}]
Suppose Assumptions \ref{assum_extrinsic} hold, and let $\epsilon>0$ be given.
 Set the value $\epsilon_1 = \frac{\epsilon}{4}$ to be used in the Lemmas \ref{lem:Kdeltas}--\ref{lemma_extension}. Note that from Assumptions \ref{assum_extrinsic} and Lemma \ref{lemma_mass_convergence}, for every closed, bounded set $C \subset Y$, we have
\begin{equation}
\label{limsup}
\limsup_{i \to \infty} \|T_i\|(C) \leq \|T\|(C).
\end{equation}

Take $\delta>0$ and balls $W$ and $W'$ provided by Lemma \ref{lem:Kdeltas}. Let $\gamma \in \left(0, \frac{\delta}{2}\right)$ be sufficiently small so that Lemma \ref{lem-extensionf} holds. Take sets $A_\ell$ and $V_\ell$ and constants $c_\ell$ provided by Lemma \ref{lem:setsAandV}. Finally, using Lemma \ref{lemma_extension}, obtain  $\hat f: Y \to \R$, and $f_i= \hat f|_{S_i}$ so that 
$f_i \in \LipB(S_i)$, $0 \leq f_i \leq 1$, with $f_i \equiv 1$ on a neighborhood of $K_i$ (for $i$ sufficiently large) such that
\[ 
\Lip(f_i) \leq 1+ 3\Lambda, \quad \spt(f_i) \subset \overline{W}' \subset B(z_0,r_0+3).
\]

Now we begin to consider  $E_{N_i}(f_i)$:
\begin{align}
 E_{N_i}(f_i) &=  \int_{Y} |d_x^{S_i} f_i|^2 d\|T_i\| \nonumber \\
&= \int_{Y\setminus W'} |d_x^{S_i} \hat f|^2 d\|T_i\| +
 \int_{\cup_\ell V_\ell} |d_x^{S_i} \hat f|^2 d\|T_i\| + \int_{W' \setminus\cup_\ell V_\ell} |d_x^{S_i} \hat f|^2 d\|T_i\|.\label{ineq_3}
\end{align}
The first term is zero, since $\hat f$ vanishes outside $W'$. For the second term in \eqref{ineq_3}, by Lemma \ref{lemma_extension}(d) and \eqref{eq-epsilon3} we get
 \begin{align*}
 \int_{\cup_\ell V_\ell} |d_x^{S_i} \hat f|^2 d\|T_i\| &\leq \sum_{\ell=1}^N \Lip(\hat f|_{V_\ell})^2 \|T_i\|(V_\ell)\\
&\leq \sum_{\ell=1}^N (c_\ell)^2 \|T_i\|(V_\ell).
\end{align*}
We replace $V_{\ell}$ with its closure in the above and take the limsup to obtain, by \eqref{limsup}, \eqref{eqn_V_A}, and \eqref{eq-epsilon3}:
\begin{align}
\limsup_{i \to \infty} \int_{\cup_\ell V_\ell} |d_x^{S_i} \hat f|^2 d\|T_i\| &\leq  \limsup_{i \to \infty} \sum_{\ell=1}^N (c_\ell)^2 \|T_i\|( \ol V_\ell)\nonumber\\
&\leq \sum_{\ell=1}^N (c_\ell)^2 \|T\|( \ol V_\ell)\nonumber\\
&\leq \sum_{\ell=1}^N (c_\ell)^2 \|T\|( A_\ell)+ \Lambda^2 \epsilon_5\nonumber \\
    &\leq\   \int_{\cup_\ell A_\ell} \left(|d_x^{S} f|^2 +\epsilon_3 \right) d\|T\| +  \epsilon_1\nonumber\\
&\leq  E_{N_\infty}(f) +  \|T\|(W')\epsilon_3+ \epsilon_1 \nonumber \\
&\leq E_{N_\infty}(f) + \epsilon_1 + \epsilon_1.\label{eqn_second_term}
\end{align}

\bigskip 
For the third term in \eqref{ineq_3}  we may omit integration over the open set $U^{-1}(-\infty, \gamma)$, since $\hat f \equiv 1$ there. It follows that:
$$ \int_{W' \setminus\cup_\ell V_\ell} |d_x^{S_i} \hat f|^2 d\|T_i\| \leq \Lip(\hat f)^2 \|T_i\|\left(\ol{W'} \setminus \left(\cup_\ell V_\ell \cup U^{-1}(-\infty,\gamma)\right)\right).$$
The set on the right is closed and bounded, so by \eqref{limsup}:
\begin{align*}
\limsup_{i\to \infty} \int_{W' \setminus\cup_\ell V_\ell} |d_x^{S_i} \hat f|^2 d\|T_i\| &\leq \Lip(\hat f)^2 \|T\|\left(\ol{W'} \setminus \left(\cup_\ell V_\ell \cup U^{-1}(-\infty,\gamma)\right)\right).
\end{align*}
Now, by considering intersections and set subtractions with $S'$ and with $K^\delta$, we find
$$\left(\ol{W'} \setminus U^{-1}(-\infty,\gamma)\right)\setminus\cup_\ell V_\ell \subseteq (\ol{W'} \setminus S') \cup \left((S' \setminus K^\delta)\setminus \cup V_{\ell} \right) \cup (K^\delta \setminus K).$$
Thus, using \eqref{eq-epsilon4} and \eqref{eq-epsilon2},
\begin{align}
\limsup_{i\to \infty} \int_{W' \setminus\cup_\ell V_\ell} &|d_x^{S_i} \hat f|^2 d\|T_i\| \nonumber \\
&\leq \Lip(\hat f)^2\left[ \|T\|\left(\ol{W'} \setminus S'\right) +\|T\|\left((S' \setminus K^\delta)\setminus \cup V_{\ell} \right)+\|T\|\left(K^\delta \setminus K\right)
\right]\nonumber\\
&\leq \Lip(\hat f)^2\left[ \|T\|\left(W' \setminus S'\right) +\|T\|(\partial W') +\epsilon_4 + \epsilon_2
\right]. \label{eqn_third_term}
\end{align}
Note that $\|T\|\left(W' \setminus S'\right) \leq \|T\|\left(Y \setminus S\right)$, where we recall $S= \set(T)$. It is shown in \cite[Theorem 4.6]{AK_cur} that an integral current's measure is concentrated on its canonical set. The same goes for locally integral currents, i.e. $\|T\|\left(Y \setminus S\right)=0$. The next term, $\|T\|(\partial W')$, vanishes by  \eqref{eqn_bdry_W'}. 

Combining \eqref{ineq_3}, \eqref{eqn_second_term}, and \eqref{eqn_third_term}, we have (using the definition of $\epsilon_2$ and $\epsilon_4$):
\begin{align*}
\limsup_{i \to \infty} E_{N_i}(f_i) &\leq   E_{N_\infty}(f) + 2\epsilon_1 + 
\left(1+3\Lambda\right)^2 (\epsilon_4 + \epsilon_2)\\
&= E_{N_\infty}(f) +  4\epsilon_1.
\end{align*}
Since $\epsilon=4\epsilon_1>0$, we have concluded the proof of Proposition \ref{prop-Energy}.
\end{proof}

\begin{proof}[Proof of Theorem \ref{thm_extrinsic}]
We establish  \eqref{USC_extrinsic} first. By the definition of capacity, given $\epsilon>0$, choose $f \in \LipB(S)$, with $0 \leq f \leq 1$ and $f \equiv 1$ on a neighborhood of $K$, such that
\begin{equation}\label{eqn_E_cap}
E_{N_\infty}(f) \leq \gamma_m\capac_{N_\infty}(K) + \epsilon.
\end{equation}
We use this choice of $f$ in Assumptions \ref{assum_extrinsic}. 
We  apply Proposition \ref{prop-Energy} to see there exists a sequence $f_i \in \LipB(S_i)$, $0 \leq f_i \leq 1$, with $f_i \equiv 1$ on a neighborhood of $K_i$ (for $i$ sufficiently large), $\Lip(f_i) \leq 1+ 3\Lambda$
such that
\begin{equation*}
\limsup_{i \to \infty} E_{N_i}(f_i) \leq E_{N_\infty}(f) + \epsilon.
\end{equation*}
Combining this with \eqref{eqn_E_cap} and the definition of capacity,  \eqref{USC_extrinsic}  follows, since $\epsilon$ was arbitrary.

\medskip

Let us now establish \eqref{USC_energy}.
By Proposition \ref{prop-Energy}, we may assume that for every $j \in \mathbb{N}$ we have a sequence $\left(f^{1/j}_i\right)_i$ such that 
\[f
_i^{1/j} \in \LipB(S_i), \quad 0 \leq f_i^{1/j} \leq 1, \quad \Lip(f_i^{1/j}) \leq 1+ 3\Lambda, \quad \spt(f_i^{1/j}) \subseteq B(z_0,r_0+3),
\]
with $f_i^{1/j} \equiv 1$ on a neighborhood of $K_i$ (for $i$ sufficiently large, depending on $1/j$), such that
\begin{equation*}
\limsup_{i \to \infty} E_{N_i}(f_i^{1/j}) \leq E_{N_\infty}(f) + 1/j.
\end{equation*}
Then we may construct a monotonically increasing sequence $n: \mathbb{N} \to \mathbb{N}$ such that $n_1 = 1$ and for all $j \in \mathbb{N}\setminus\{1\}$ it holds that for all $i \geq n_j$, $f_i^{1/j} \equiv 1$ on a neighborhood of $K_i$ and 
\[
E_{N_i}(f_i^{1/j}) \leq E_{N_\infty}(f) + 2 / j.
\]
We then define a new non-decreasing sequence $m : \mathbb{N} \to \mathbb{N}$ by
\[
m_{j} = \max\{ k \ |\  n_k \leq j \}.
\]
Note that the maximum is well-defined as $(n_k)$ is monotonically increasing. Moreover, $m_j \to \infty$ as $j \to \infty$ since for every $M_0 \in \mathbb{N}$, if we define $j_0 := n_{M_0}$, we have $m_{j_0} \geq M_0$ (in fact $m_{j_0} = M_0$ since $n: \mathbb{N} \to \mathbb{N}$ is monotonically increasing).
We finally define $f_i$ as
\[ 
f_i := f^{1/m_i}_i.
\]
 Since by construction $i \geq n_{m_i}$ for all $i \in \mathbb{N}$, we know that for all $i \in \mathbb{N} \setminus\{1\}$ $f^{1/m_i}_i\equiv 1$ on a neighborhood of $K_i$, and 
\[ 
E_{N_i}(f_i)= E_{N_i}\left(f^{1/m_i}_i\right) \leq E_{N_\infty}(f) + 2/m_i
\]
whereas $m_i \to \infty$ as $i \to \infty$. Thus, the $f_i$'s satisfy all the properties mentioned in the theorem.
So,  \eqref{USC_energy} has been established.
\end{proof}

\bigskip

Now we may prove the first main theorem, Theorem \ref{thm_balls}.

\begin{proof}[Proof of Theorem \ref{thm_balls}]
We divide the proof in two cases. Assume first that $K=\ol B(p, r) \neq X$. Given $\epsilon > 0$, take a  function $f \in \Lip_B(X)$, $0\leq f \leq 1$, with $f \equiv 1$ in a neighborhood $O$ of $K$ and
\begin{equation}
E_N(f) \leq \gamma_m\capac_N(K) + \epsilon. \label{eqn_E_N_eps}
\end{equation}
We aim to fulfill Assumptions \ref{assum_extrinsic}, so that we can apply Theorem \ref{thm_extrinsic}. Choose $r_0 > 3 \diam(K)+d(K, X \setminus K)$ sufficiently large so that $K \subseteq \spt(f) \subseteq B(p,r_0)$, 
and
\begin{equation}
\label{eqn_B_X_K}
B(p,r_0) \cap (X \setminus K) \neq \emptyset.
\end{equation}

Using Definition \ref{def_pointed_F}, choose $R>r_0+4$ such that 
\[
N_i \llcorner B(p_i,R) \to N \llcorner B(p,R) \quad  \textrm{in the $\VF$ sense. }
\]
Let $S_i = \set(T_i \llcorner B(p_i,R)) \subseteq X_i$ and $S=\set(T \llcorner B(p,R))\subseteq X$. Since $p \in X = \set(T)$, it follows that $p \in \set(T\llcorner B(p,R)) =  S$.  Thus $S \neq \emptyset$ and $T \llcorner B(p,R) \neq 0$. It is straightforward to see that $K \subseteq S$. That $K$ is a proper subset of $S$ follows from \eqref{eqn_B_X_K}.

By Theorem \ref{thm_embedding} there exists a $w^*$-separable Banach space $Y$ and distance-preserving maps  $\varphi_i: S_i \to Y$ and $\varphi: S \to Y$ 
such that the integral currents $\varphi_{i\#}(T_i \llcorner B(p_i,R))$ converge to $\varphi_{\#}(T \llcorner B(p,R))$ in the flat $d_Y^F$ sense (and therefore in the weak sense), the masses converge,
\begin{equation}
\M(\varphi_{i\#}(T_i \llcorner B(p_i,R))) \to \M(\varphi_{\#}(T \llcorner B(p,R))), \label{eqn_masses_converge}
\end{equation}
and $\varphi_i(p_i) \to \varphi(p)$ as $i \to \infty$.

 Let $u(x)=d(x,p)-r$ for $x \in X$, which satisfies $\{u\leq 0\} = K$ and has $\Lip(u)=1$. We claim $u$ is a defining function for $K$ as in \eqref{defining_function}. Let $\tilde O$ be any open subset of $X$ containing $K$. For $x \in X \setminus \tilde O$, we have $u(x)>0$ and $d(x,K)>0$ and $u$ and $x \mapsto d(x, K)$ are continuous, positive functions on $X \setminus \tilde{O}$. In particular, $\frac{u(x)}{d(x,K)}$ has a lower bound $\beta_0>0$ on the compact set $(X \setminus \tilde O) \cap \ol B(p,r')$, for $r'>r$. On the other hand, for $x \in (X \setminus \tilde O) \setminus \ol B(p,r')$, we have
 $$\frac{u(x)}{d(x,K)} \geq \frac{u(x)}{d(x,p)} = \frac{d(x,p)-r}{d(x,p)} \geq \frac{r'-r}{r'}>0,$$
 since $z \mapsto \frac{z-r}{z}$ is increasing for $z>0$. Choosing $\beta = \min\left(\beta_0, \frac{r'-r}{r'}\right)>0,$ we conclude $u(x)$ satisfies conditions \eqref{defining_function} of a defining function. (Note: if $(X \setminus \tilde O) \cap \ol B(p,r')$ or $(X \setminus \tilde O) \setminus \ol B(p,r')$ is empty, the proof is easily modified, or trivial if both sets are empty.)

Since $\varphi$ is a distance-preserving map, $u \circ \varphi^{-1}:\varphi(S) \to \R$ is equal to $d_Y(\varphi(p), \cdot)-r$. It is elementary to verify that the 1-Lipschitz extension $U$ of $u \circ \varphi^{-1}$ to $Y$ is simply given by $d_Y(\varphi(p), \cdot)-r$.

To fulfill Assumptions \ref{assum_extrinsic}, take the Banach space $Y$, 
the nonzero integral current $\varphi_{\#}(T \llcorner B(p,R))$ on $Y$, whose canonical set is $\varphi(S)$, 
the weakly converging sequence of integral currents $\varphi_{i\#}(T_i \llcorner B(p_i,R))$ on $Y$, whose canonical sets are $\varphi_i(S_i)$, the nonempty compact set $\varphi(K) \subsetneq \varphi(S)$ in $Y$, the defining function $u \circ \varphi^{-1}$ of $\varphi(K)$ with standard 1-Lipschitz extension $U:Y \to \R$, 
the sequence $\alpha_i=d_Y(\varphi_i(p_i), \varphi(p))\geq 0$ (which converges to 0 as $i \to \infty$), the Lipschitz function $f \circ \varphi^{-1}: \varphi(S) \to \R$, the point $z_0=\varphi(p)$, and the value $r_0$ above. 
Let $\hat N= (\varphi(S), d_Y, \varphi_{\#}(T \llcorner B(p,R)))$ and analogously,  $\hat N_i= (\varphi_i(S_i), d_Y, \varphi_{i\#}(T_i \llcorner B(p_i,R)))$. 

A few hypotheses in Assumptions \ref{assum_extrinsic} require verification in order to apply Theorem \ref{thm_extrinsic}. First, we claim that
\begin{equation}
\label{eqn_r0}
r_0 >3\diam(\varphi(K)) + d_Y(\varphi(K), \varphi(S) \setminus \varphi(K)).
\end{equation}
By our choice of $r_0$ and the fact that isometries preserve diameter, it suffices to show 
\begin{equation}\label{eq-disKtoCom}
  d(K, X \setminus K) \geq d_Y(\varphi(K), \varphi(S) \setminus \varphi(K)).  
\end{equation}
Given $\eta>0$, there exists $k \in K$ and $x \in X \setminus K$ such that
$$d(k,x) \leq d(K, X \setminus K ) + \eta.$$
Then $\varphi(k) \in \varphi(K)$, and we claim that $x \in S$ (if $\eta$ was chosen sufficiently small). By the triangle inequality,
\begin{align*}
 d(p,x) &\leq  d(p,k) +  d(k,x)\\
&\leq \diam(K) + d(K, X \setminus K) + \eta ,
\end{align*}
which is less than $R$ if $\eta$ is sufficiently small. Then $x \in B(p,R)$. Since $x \in X = \set(T),$ we have $x \in \set (T \llcorner B(p,R))= S$. Then $\varphi(x) \in \varphi(S) \setminus \varphi(K)$, so
$$d_Y(\varphi(K), \varphi(S) \setminus \varphi(K)) \leq d_Y(\varphi(k), \varphi(x)) = d(k,x) \leq d(K, X \setminus K ) + \eta.$$
Since $\eta>0$ can be arbitrarily small, the proof of \eqref{eq-disKtoCom} is complete. So the proof of  claim \eqref{eqn_r0} is complete.

Second, $f \circ \varphi^{-1}$ is clearly Lipschitz, bounded between 0 and 1, with $f \circ \varphi^{-1} \equiv 1$ in a neighborhood of $\varphi(K)$. Since $\spt(f) \subseteq B(p,r_0)$ and $\varphi$ is a distance-preserving map, we have $\spt(f \circ \varphi^{-1}) \subseteq B(\varphi(p),r_0) = B(z_0,r_0)$.

Third, the mass convergence hypothesis holds by \eqref{eqn_masses_converge}, with $V$ taken to be any ball about $z_0$ of radius greater than $R$.

Now, Assumptions \ref{assum_extrinsic} hold, so by Theorem \ref{thm_extrinsic}, for each $i$ sufficiently large, there exists a Lipschitz function $f_i:\varphi_i(S_i) \to \R$, $0 \leq f_i \leq 1$, with $f_i\equiv 1$ on a neighborhood of $K_i$  (where $K_i=U^{-1}(-\infty,\alpha_i] \cap \varphi_i(S_i)$),
and $\spt(f_i) \subseteq B(\varphi(p), r_0+3)$, such that
\begin{equation}
\label{limsup_E}
\limsup_{i \to \infty} E_{\hat N_i}(f_i) \leq E_{\hat N}(f \circ \varphi^{-1}).
\end{equation}

Consider $f_i \circ \varphi_i:S_i \to \R$, which is Lipschitz, bounded between 0 and 1, equalling 1 on a neighborhood of $\varphi_i^{-1}(K_i)$. To control the support, we have that for $i$ large, $d_Y(\varphi_i(p_i), \varphi(p)) < 1$. From this it follows $\spt(f_i) \subseteq B(\varphi_i(p_i), r_0+4)$, and so
$$\spt(f_i \circ \varphi_i) \subseteq B(p_i, r_0+4) \subseteq B(p_i,R).$$
Thus, we may extend $f_i \circ \varphi_i$ by 0 on $X_i \setminus B(p_i,R)$ to produce a Lipschitz function on $X_i$ with the same Dirichlet energy; call it $\hat f_i$, which is a valid test function for the capacity of $K_i$. 

Using \eqref{limsup_E} on the fourth line below and \eqref{eqn_E_N_eps} on the last line,
\begin{align*}
\limsup_{i \to \infty} \gamma_m\capac_{N_i}(\varphi_i^{-1}(K_i)) &\leq \limsup_{i \to \infty} E_{N_i} (\hat f_i)
 \\
&= \limsup_{i \to \infty} E_{N_i \llcorner B(p_i,R)} (f_i \circ \varphi_i) \\
&= \limsup_{i \to \infty} E_{\varphi_{i\#}(N_i \llcorner B(p_i,R))} (f_i) \\
&\leq E_{\varphi_{\#}(N \llcorner B(p,R))} (f \circ \varphi^{-1}) \\
&= E_{N \llcorner B(p,R)} (f) \\
&= E_{N} (f)\\
&\leq \gamma_m\capac_N(K) + \epsilon.
\end{align*}

Since $\epsilon>0$ was arbitrary, the proof will now follow from the monotonicity of capacity by showing $\ol B(p_i,r) \subseteq \varphi_i^{-1}(K_i)$.  To verify this, let $z \in \ol B(p_i,r)$. We must show $\varphi_i(z) \in K_i$, i.e., $U(\varphi_i(z))\leq \alpha_i$:
\begin{align*}
U(\varphi_i(z)) &= d_Y(\varphi(p), \varphi_i(z)) - r\\
&\leq d_Y(\varphi(p), \varphi_i(p_i)) + d_Y(\varphi_i(p_i), \varphi_i(z)) - r\\
&= \alpha_i + d_i(z,p_i) -r \leq \alpha_i.
\end{align*}

The proof of the theorem is complete in the case $K=\ol B(p,r) \neq X$.

\medskip 
To complete the proof, we consider the case $K=\ol B(p, r) = X$. In particular, $X$ is bounded and the function $f \equiv 1$ on $X$ is a valid test function for the capacity, i.e. $\capac_N(\ol B(p, r))=0$. 

Now, choose $R>r$ so that $N_i \llcorner B(p_i,R)$ and $N_i \llcorner B(p_i,R+1)$ converge in the $\VF$ sense as integral current spaces to $N \llcorner B(p,R)=N=N \llcorner B(p,R+1)$. By mass convergence,
$$\|T_i\|(A(p_i,R,R+1)) \to 0 \text{ as } i \to \infty,$$
where $A(p_i,R,R+1)$ is the annulus $B(p_i,R+1) \setminus B(p_i,R)$.
Let $f_i$ be a function on $X_i$ that equals 1 on $\ol B(p_i,R)$, 0 on $X_i \setminus B(p_i,R+1)$ and has $\Lip(f_i) \leq 1$. (Such a function can easily be constructed as a radial function of $d_{i}(p_i, \cdot)$.) Then we have
\begin{align*}
\capac_{N_i}(\ol B(p_i,r)) &\leq \capac_{N_i}(\ol B(p_i,R))\\
&\leq\frac{1}{\gamma_m} \int_{X_i} |d_x^{X_i} f_i|^2 d\|T_i\|(x)\\
&\leq\frac{1}{\gamma_m} \Lip(f_i)^2 \|T_i\|(A(p_i,R,R+1)).
\end{align*}
It follows that $\limsup_{i \to \infty}\capac_{N_i}(\ol B(p_i,r))=0$, completing the proof.
\end{proof}

We conclude this section by proving the other main result, Theorem \ref{thm_sublevel}.

\begin{proof}[Proof of Theorem \ref{thm_sublevel}]
Recall that by definition each $T_i$ is a locally integral current defined on  $(\overline{X}_i, \overline{d}_i)$. So, we can apply Theorem 1.1 in \cite{LW} to the currents $T_i$ and points $p_i \in X_i$. The hypothesis
$$\sup_{i \in \mathbb N} \Big( \|T_i\|(B(p_i,r))+ \|\partial T_i\| (B(p_i,r))\Big) < \infty$$
for each $r>0$ holds by \eqref{eqn_bdry_mass_bound} and the hypothesis of pointed $\VF$ convergence. Thus, by Theorem 1.1 in \cite{LW}, there exist a subsequence of $N_i$ (note: in this proof we will not relabel subsequences), a complete metric space $(Z,d_Z)$, a point $z\in Z$, and distance-preserving maps  $\varphi_i: \overline {X}_i \to Z$ such that $\varphi_i(p_i)\to z$ in $Z$ and $\varphi_{i\#} (T_i) \to T'$ in the local flat topology, for some locally integral current $T'$ on $Z$ of dimension $m$. 

We point out that $Z$ can be taken to be a $w^*$-separable Banach space $Y$: first, recall that integral current spaces are separable \cite[Remark 2.36]{SW}, and the same goes for local integral current spaces. The construction of $Z$ in \cite{LW} comes directly from Proposition 5.2 in \cite{We2011}. There, $Z$ is constructed as the completion of a countable union of the $X_i$ and is therefore separable. Thus, we can apply Kuratowski's embedding theorem and replace $Z$ with $Y=\ell^{\infty}(Z)$, a $w^*$-separable Banach space. We then assume the embeddings $\varphi_i$ and $\varphi$ are into $Y$, that $T'$ lives on $Y$, and that $z \in Y$.

Let $N'=(\set(T'), d_Y, T'\llcorner \overline{ \set(T')})$. If we show that $N_i \to N'$ in the pointed $\mathcal{VF}$ sense with respect to $p_i \in X_i$ and $z \in \overline{\set(T')}$, 
then by uniqueness of pointed $\F$ limits (Proposition \ref{prop_pointed_limit}), we would get that $N\cong N'$.  Once this is done, the result will follow by applying Theorem \ref{thm_extrinsic}. 

Let
$$G= \{r > 0 \;:\; N_i \llcorner B(p_i,r) \toF N \llcorner B(p,r) \text{ with } p_i \to p \text{ as } i \to \infty\}.$$
By Definition \ref{def_pointed_F}, $G$ is unbounded. Using the slicing argument in \cite[Lemma 4.1]{Sor}, we pass to a subsequence so that $\R^+ \setminus G$ has measure zero. Applying a similar argument in $Y$, we may pass to a further subsequence and replace $G$ with a subset, still with $\R^+ \setminus G$ having measure zero, such that
\begin{equation}
\label{eqn_T_prime}
\varphi_{i\#}(T_i \llcorner B(p_i,r)) \to T' \llcorner B(z,r)
\end{equation}
as integral currents in the flat sense in $Y$ for all $r \in G$.

We now verify that $z \in \overline{\set(T')}$ using  Theorem 2.9 in \cite{HLP} as follows. Let $r_1 < r_2$ belong to $G$.
Take $N_i \llcorner B(p_i, r_2)$ as ``$M_i$'' and $B(p_i,r_2)$ as ``$V_i$'' in the theorem. Then
$M_i \llcorner V_i \toF N \llcorner B(p,r_2)$ with $p_i \to p$. Thus, condition $(1)$ of that theorem is satisfied, with $N \llcorner B(p,r_2)$ playing the role of ``$N_\infty$'' and $p$ as ``$x_\infty$.''
Condition $(2)$  follows, taking ``$\delta$'' as $r_1$.  
Since we also have  $M_i=N_i \llcorner B(p_i, r_2)  \to  N' \llcorner B(z,r_2)$ in the flat sense in $Y$, with $\varphi_i(p_i) \to z$,  Theorem 2.9 in \cite{HLP} guarantees a subsequence such that $\varphi_i(p_i)$ converges to some $x' \in \overline{\set(T' \llcorner B(z,r_2))}$  in $Y$.   But we know that $\varphi_i(p_i) \to z$ in $Y$. Hence, $z=x'$, and $x' \in  \overline{\set(T')}$.

Finally, we prove that  $N_i \to N'$ in the pointed $\mathcal{VF}$ sense with respect to $p_i \in X_i$ and $z \in \overline{\set(T')}$. Let $r_0>0$. There exists $r \geq r_0$ with $r\in G$. Thus, from \eqref{eqn_T_prime}, $N_i \llcorner B(p_i,r) \toF N' \llcorner B(z,r)$. Since $\varphi_i(p_i) \to z$, we have shown the claim.

We now apply Theorem \ref{thm_extrinsic} to the Banach space $Y$, 
the nonzero integral current $\varphi_{\#}(T)$ on $Y$, where $\varphi: X \to \set(T')$ is the isometry given by the previous paragraph and Proposition \ref{prop_pointed_limit}, the weakly converging sequence of integral currents $\varphi_{i\#}(T_i)$ on $Y$ to $T'=\varphi_{\#}(T)$, the point $z$, the nonempty compact set $\varphi(K) \subsetneq \varphi(X)$, the defining function $u \circ \varphi^{-1}$ of $\varphi(K)$ with standard 1-Lipschitz extension $U:Y \to \R$. 
Let $N'_i= (\varphi_i(X_i), d_Y, \varphi_{i\#}(T_i))$
and $K_i'=U^{-1}(-\infty,\alpha_i] \cap \varphi_i(X_i)$.
Thus,
\begin{equation*}
\limsup_{i \to \infty} \capac_{N'_i}(K'_i) \leq \capac_{N'}(\varphi(K)).
\end{equation*}
Since $N'_i\cong N_i$ and $N' \cong N$ as local integral current spaces, via the distance preserving maps $\varphi_i$ and $\varphi$, respectively, we have by definition of $K_i$: 
$\capac_{N'_i}(K'_i) = \capac_{N_i}(\varphi_i^{-1}(K'_i)) = \capac_{N_i}(K_i)$, and analogously,  
$\capac_{N'}(\varphi(K))=\capac_{N}(K)$, which proves the theorem. 
\end{proof}

\section{Examples}
\label{sec_examples}

In this section we give examples to demonstrate that a) the capacity upper semicontinuity can be strict, i.e., the capacity can jump up in a limit (Examples 1--3), and b) \emph{volume-preserving} convergence is necessary to guarantee upper semicontinuity (Example 4). We find there are essentially two independent reasons for the upper semicontinuity phenomenon. First, even under smooth convergence on compact sets the capacity of a set can jump up due to  non-uniform control at infinity, e.g. a change in the end geometry of the manifold.  Second, under $\VF$-convergence the capacity can also jump up, even with uniform control on the geometry at infinity.

\medskip

\paragraph{\emph{Example 1: Transition from cylindrical to Euclidean end geometry}}

Consider rotationally symmetric smooth Riemannian metrics on $\R^n$, $n \geq 3$ of the form
$$g_i = ds^2 + f_i(s)^2 d\sigma^2,$$
where each $f_i:[0,\infty) \to \R$, $i=1,2,\ldots$ is smooth, with $f_i(0)=0$, $f_i(s)>0$ for $s>0$, and $d\sigma^2$ is the standard metric on the unit $(n-1)$-sphere. If we assume
$$f_i(s) = \begin{cases}
s, & 0 \leq s \leq i\\
i+1,& s \geq i+1
\end{cases},$$
then the corresponding Riemannian manifold $(\R^n, g_i)$ is isometric to a Euclidean ball for $s \leq i$ and to a cylinder (sphere-line product) of radius $i+1$ for $s \geq i+1$. The capacity of every compact set in $(\R^n, g_i)$ is zero, due to the cylindrical end (explained below). However, this sequence of Riemannian manifolds converges smoothly on compact sets, and hence in the pointed $\VF$ sense, to Euclidean space (where of course there exist compact sets of positive capacity). This example shows we cannot expect the capacity to behave continuously even for smooth local convergence.

To verify that the capacity vanishes identically with respect to $g_i$, given $i$, consider a radial  Lipschitz function $\varphi_L(s)$ on $\R^n$ with
$$\varphi_L(s)=\begin{cases}
1, & s \leq L\\
2-\frac{s}{L}, &L < s \leq 2L\\
0, & 2L < s
\end{cases}$$
for a parameter $L$. Taking any $L > i+1$, we have
$$\int_{\R^n} |\nabla \varphi_L|^2 dV_{g_i} = \omega_{n-1}\int_L^{2L} \frac{1}{L^2}ds = \frac{\omega_{n-1}}{L},$$
which can be made arbitrarily small by taking $L$ large. Moreover, by taking $L$ large, we can arrange $\varphi_L=1$ on any compact set.

\medskip

\paragraph{\emph{Example 2: Formation of a new end}}
Let $M=\R \times S^2$ be equipped with a rotationally symmetric Riemannian metric
$$g= ds^2 + f(s)^2 d\sigma^2,$$
where $f>0$ is a smooth, even function. Further, assume $f^{-2}$ is integrable on $\R$. Let $K$ be the compact subset $\{0\} \times S^2$. We compute the capacity of $K$ in $(M,g)$ as follows.

It is elementary to verify that the function
$$\psi(s) = \begin{cases}
\int_0^s f(r)^{-2} dr & s > 0\\
\int_s^0 f(r)^{-2} dr & s < 0
\end{cases}$$
is $g$-harmonic on $M \setminus K$, equalling zero on $K$ and approaching a positive constant $C=\int_0^\infty f(r)^{-2} dr$ at $\pm \infty$. In particular, $\varphi= 1-\frac{1}{C}\psi$ is a minimizer for the capacity of $K$. (Although $\varphi$ is not 1 on a neighborhood of $K$, this discrepancy may be neglected: it is straightforward to modify $\varphi$ near $K$ so that it is 1 on a neighborhood of $K$ and such that the Dirichlet energy changes by an arbitrarily small amount.)  From this, we can verify that the capacity of $K$ in $(M,g)$ equals $\frac{2}{C}$:
\begin{align*}
\capac(K) &= \frac{1}{4\pi} \int_M |\nabla \varphi|^2 dV\\
&=\frac{1}{4\pi} \int_{-\infty}^\infty \frac{1}{C^2}f(s)^{-4} (4\pi f(s)^2) ds\\
&= \frac{2}{C}.
\end{align*}

Now, consider a sequence of smooth, positive functions $f_i :[-2i, \infty) \to \R$ such that $f_i(s) = f(s)$ for $s \geq -i $, and such that $g_i = ds^2 + f(s)^2 d\sigma^2$ is a smooth Riemannian metric with a pole at $s=-2i$, i.e. $f_i(-2i)=0$, so the underlying manifold is diffeomorphic to $\R^3$.

Then for every $i$, the capacity of $K$ with respect to $g_i$ equals $1/C$, i.e. is half the capacity of $K$ in $(M,g)$. This can be seen by observing the capacity of $K$ in $(M,g_i)$ is achieved by the function that is 1 for $-2i\leq s\leq 0$ and otherwise agreeing with $\varphi$ above.

But the $g_i$ converge smoothly on compact sets to $g$, and the set $K$ has capacity $2/C$ in the limit space.

\medskip

\paragraph{\emph{Example 3: Capacity jump with $\VF$-convergence}}

Let $Y$ be Euclidean 4-space with coordinates $(x,y,z, w)$, and let $X$ be the $w=0$ subspace. Then $X$ naturally becomes a local integral current space $N$ of dimension 3 with the Euclidean metric, where the locally integral current is given by integration, oriented up (i.e., in the $+w$ direction). Obviously $X$ is isometrically embedded in $Y$.

Let $K =\{ (x,y,z, 0) \; | \; x^2+y^2+z^2 \leq 1\}$. For each $i=1,2,\ldots$, define
$$X_i = K \cup \{(x,y,z, 1/i) \; : \; x^2+y^2+z^2 \geq 1\}.$$
Letting $X_i$ have the induced Euclidean metric, $X_i$ is obviously isometrically embedded in $Y$. $X_i$ may also be equipped with the locally integral 3-current given by integration, oriented up, producing a local integral current space, $N_i$. See Figure \ref{fig3}.

\begin{figure}
    \centering
    \includegraphics[scale=0.8]{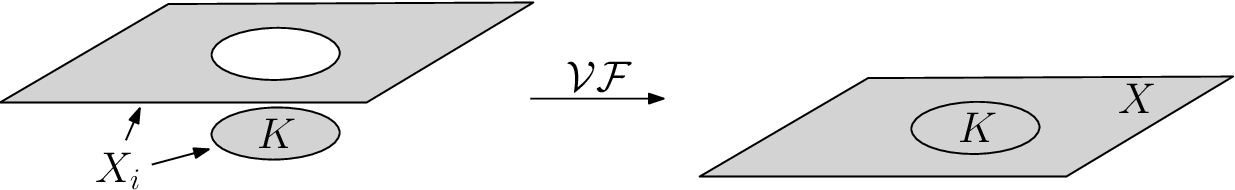}
    \caption{In Example 3, the space $X_i$ is the union of a unit 3-ball $K$ and a 3-space minus a unit ball, sitting at height $\frac{1}{i}$ above $K$ in $\R^4$. $X_i$, naturally viewed as a local integral current space, pointed $\VF$-converges to $X$  with respect to the origin, which is Euclidean 3-space. The corresponding regions for $K$ are simply $K_i=K$. 
    }
    \label{fig3}
\end{figure}

Observe that $N_i$ converges to $N$ in the pointed $\VF$-sense as $i \to \infty$, where all the points are chosen to be the origin in $Y$. This can easily be seen from the fact that $X_i$ and $X$ are isometrically embedded in $Y$, and $X_i \to X$ in the usual local flat sense in $Y$.

Let $u$ be the defining function for $K$ in $X$ given as the signed distance in $X$ to $\partial K$, negative inside of $K$, and let $U: Y \to \R$ be the standard Lipschitz extension. Consider $K_i = U^{-1}(-\infty,0] \cap X_i$, a sequence of corresponding regions as in Section \ref{sec_main}. We claim $K_i=K$. If $x \in K$, then $u(x)=U(x) \leq 0$. Since $K \subset X_i$, we have $x \in K_i$. On the other hand, suppose $p \in K_i$, so $U(p) \leq 0$. Then there exists $x \in X$ such that
$$u(x) + d_Y(x,p) \leq 0.$$
Clearly $u(x) \leq 0$, i.e. $x \in K$. We can see the defining function is given by $u(x) = d_Y(0,x) -1$. With the triangle inequality, we have
$$d_Y(p,0) \leq 1,$$
i.e., $x$ belongs to the closed unit ball in $Y$ about $p$. The latter only intersects $X_i$ at $K$, so $p \in K$.

Now, the capacity of $K$ in $X$ is positive, but the capacity of $K_i$ in $X_i$ is zero for all $i$. This is easy to see because $X_i$ is disconnected: the function that equals 1 on $K$ and vanishes on $X_i \setminus K$ is Lipschitz and is a valid test function for the capacity, with zero Dirichlet energy. Thus, in \eqref{eqn_limsup_sublevel} of Theorem \ref{thm_sublevel}, we have strict inequality (without needing to take a subsequence).

If desired, one can arrange a similar example with the $X_i$ connected, as follows. Join the two connected components of $X_i$ with a thin solid tube of 3-volume of $O(1/i^3)$ and length $O(1/i)$. Then with a Lipschitz test function $f_i$ equalling 1 on $K$, with $\Lip(f_i)$ of $O(i)$ on the tube, and 0 elsewhere, the Dirichlet energy of $f_i$ would be $O(1/i)$, i.e., the capacity of $K$ in the connected space $X_i$ would still converge to 0.

\medskip 

\paragraph{\emph{Example 4: Cancellation and necessity of volume-preserving $\F$ convergence}}
Here, we demonstrate that upper semicontinuity of capacity may fail for pointed $\F$-convergence, without assuming $\VF$-convergence. We exploit the ``cancellation'' phenomenon of intrinsic flat convergence as in \cite[Example A.19]{SW}.

Let $Y$ be Euclidean 4-space as in the previous example, and let $X_i$ be the union of the $w=0$ hyperplane and an annular region sitting slightly above:
$$X_i = \{(x,y,z,w) \; | \; w=0\} \cup \{(x,y,z,1/i) \; | \; 1 \leq x^2 + y^2+z^2 \leq 4\}.$$
Equip $X_i$ with the induced metric, so that $X_i$ is isometrically embedded in $Y$. Let $T_i$ be the locally integral current on $X_i$ given by integration, oriented up on the $w=0$ hyperplane and down on the annular region. $X_i$ with the induced metric, equipped with $T_i$, produces a sequence of local integral current spaces, $N_i$. Letting $K$ be the unit ball $$\{(x,y,z, 0) \; | \;x^2 + y^2 +z^2 \leq 1\},$$
we have $K\subset X_i$, and the capacity of $K$ in $X_i$ is a positive constant independent of $i$.

Now, $N_i$ converges in the pointed $\F$-sense (but not $\VF$-sense) to 
$$X=K \cup \{(x,y,z,0) \; | \; x^2+y^2+z^2 \geq 4\},$$
with the induced metric and the integral current given by integration, oriented up. See Figure \ref{fig4}. Here, all the base points are chosen to be the origin. Since $K$ is a compact component of $X$, we have $\capac_N(K)=0$. Using $r=1$, we have a violation of Theorem \ref{thm_balls} if $\VF$-convergence is not assumed.

\begin{figure}
    \centering
    \includegraphics[scale=0.8]{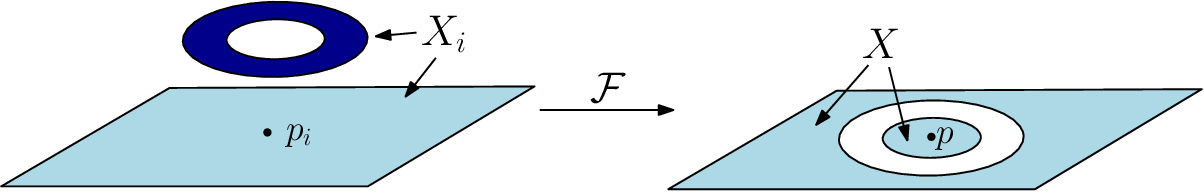}
    \caption{In Example 4, the space $X_i$ is the union of a hyperplane with an oppositely-oriented annular region sitting above at height $\frac{1}{i}$ in $\R^4$. $X_i$, naturally viewed as a local integral current space, converges in the pointed $\F$-sense (but not $\VF$) to $X$, which is Euclidean 3-space minus an annular region representing where the cancellation occurred.
    }
    \label{fig4}
\end{figure}

\section{Asymptotically flat local integral current spaces and general relativistic  mass}
\label{sec_mass}

Asymptotically flat (AF) Riemannian manifolds are of particular interest in the study of general relativity. These spaces are characterized by their metric tensors (and derivatives) decaying in a precise sense to the Euclidean metric in some appropriate coordinate chart that covers all but a compact set. The ADM mass is a numerical geometric invariant of an AF manifold that is of both significant physical and geometric interest \cite{ADM}. As described in the introduction, a number of open problems seem to necessitate an understanding of asymptotic flatness and ADM mass for spaces that are neither smooth nor Riemannian (again, we refer the reader to  \cite{Sor2} and \cite{JL}, for example).

In this section, we give a possible definition of asymptotic flatness for local integral current spaces and describe two possible definitions of general relativistic  mass for such spaces.
\medskip

We begin with a generalization of asymptotic flatness to metric spaces:
\begin{definition}
We define a metric space $(X,d)$ to be \emph{asymptotically flat of dimension $n \geq 3$} if for any $\epsilon >0$, there exists a compact set $K \subset X$ and a bijective map $\Phi$ from $X \setminus K$ to $\R^n \setminus B$ (for a closed ball  $B \subset \R^n$) that is bi-Lipschitz when $X \setminus K$ and $\R^n \setminus B$ are endowed with the restricted distance of $d$ and of the Euclidean distance function, respectively, such that
$$\Lip(\Phi),\Lip(\Phi^{-1}) \leq 1+\epsilon.$$
\end{definition}

It is possible to show that any AF Riemannian manifold of dimension $n$ (in the usual sense) is an AF metric space of dimension $n$ with its natural distance function.

Similarly, we define a metric measure space $(X,d,\mu)$ to be AF of dimension $n \geq 3$ if the above properties hold for $(X,d)$ and also if
$$(1+\epsilon)^{-n}\mathcal{L}^n  \leq \Phi_{\#}(\mu) \leq (1+\epsilon)^n\mathcal{L}^n$$
as Borel measures on $\R^n \setminus B$, where $\mathcal{L}^n$ is the Lebesgue measure. For example, if $(X,d)$ is an AF metric space of dimension $n$, then equipped with Hausdorff $n$-measure, it becomes an AF metric measure space.

Now we can define a local integral current space  $(X,d,T)$ of dimension $n$ to be AF if $(X,d, \|T\|)$ is an asymptotically flat metric measure space of dimension $n$. (We note other reasonable definitions are possible, and that the current definition does not involve orientations or tangent cones.)  In this setting, the capacity of compact sets is well defined, as is the boundary mass of balls for almost all radii \cite[Lemma 2.34]{Sor}.

\medskip

We now proceed to discuss the concept of general relativistic mass for asymptotically flat local integral current spaces (not to be confused with the mass measure). The standard definition of ADM mass involves derivatives of the Riemannian metric coefficients and so is  unsuitable for metric spaces.

A well-known approach to a ``weak'' understanding of ADM mass is due to Huisken \cites{Hui1,Hui2}: his so-called \emph{isoperimetric mass} uses only volumes and areas (perimeters) in its formulation. In dimension three, with nonnegative scalar curvature, it is known to equal the ADM mass in the smooth asymptotically flat case \cites{Hui1,Hui2,JLC0,CESY}. In \cite{JL}, Jauregui and Lee gave a definition of asymptotically flat local integral current space (more restrictive than that which we use here, essentially requiring the complement of a compact set to be a smooth manifold with a $C^0$ Riemannian metric), and used Huisken's isoperimetric mass as a substitute for ADM mass. Since the perimeters of compact sets are well defined even for $C^0$ Riemannian metrics, it was clear that Huisken's definition was well defined. 

Huisken's isoperimetric mass, $m_{iso}$, is typically defined for asymptotically flat Riemannian 3-manifolds. We can generalize this concept to any 3-dimensional asymptotically flat local integral current space, using boundary mass in place of perimeter: 
$$m_{iso}(X,d,T)=\sup_{\{K_j\}} \limsup_{j \to \infty}  \frac{2}{\M(\partial(T \llcorner K_j))} \left[\|T\|(K_j)-\frac{1}{6 \sqrt{\pi}} \M(\partial(T \llcorner K_j))^{\frac{3}{2}}\right]\qquad \in [-\infty,\infty],$$
 where $\{K_j\}$ is an exhaustion of $X$ by compact sets. Note that if $\M(\partial(T \llcorner K_j)) = \infty$, the expression inside the $\limsup$ is $-\infty$. In particular, we may restrict to exhaustions such that $\M(\partial(T \llcorner K_j))$ is finite, which is equivalent to saying $T \llcorner K_j$ is an integral $n$-current on $(X,d)$. 

Huisken's definition was inspired by the isoperimetric inequality: far out in the AF end, the inequality almost holds, and the ADM mass can be detected through the deficit. Jauregui proposed a corresponding definition of mass based on the isocapacitary inequality (that the capacity of a compact set of a given volume in $\R^n$  is minimized by balls) \cite{Jau}. This definition of ``capacity-volume mass'' was for AF manifolds, including $C^0$ AF manifolds. However, it can be generalized to AF local integral current spaces of dimension $3$ as follows:
$$m_{CV}(X,d,T) =  \sup_{\{K_j\}} \limsup_{j \to \infty} \frac{1}{4\pi\capac(K_j)^2 }  \left[ \|T\|(K_j) - \frac{4\pi}{3} \capac(K_j)^{3}\right],$$
where the capacity is defined as in \eqref{def_cap}.
In \cite{Jau}, strong evidence was given for $m_{CV}$ recovering the ADM mass in the smooth case with nonnegative scalar curvature (and hence serving as a weak stand-in for the ADM mass). Furthermore, it was observed that capacity is in some ways better behaved than perimeter or boundary mass --- for example, capacity is less sensitive to perturbations, and as confirmed by our main theorems and discussed below, has a favorable semicontinuity property --- so $m_{CV}$ may ultimately be easier to work with in low-regularity ADM mass problems than $m_{iso}$.

To connect this study of mass with our main theorems, we conclude with a discussion of the lower semicontinuity of total mass in general relativity. In \cites{JC2,JLC0} it was shown that the ADM mass functional (and more generally, Huisken's isoperimetric mass)  is lower semicontinuous on an appropriate class of asymptotically flat 3-manifolds of nonnegative scalar curvature, for pointed $C^2$, and more generally, for pointed $C^0$ Cheeger--Gromov convergence. This was further generalized to pointed $\VF$ convergence, under natural hypotheses, using Huisken's isoperimetric mass as a stand-in for the mass of the (potentially non-smooth) limit space \cite{JL}. Below, we argue that Theorem \ref{thm_sublevel} supports lower semicontinuity of $m_{CV}$ in dimension three.

To simplify the discussion, we recall from the appendix of \cite{Jau} that $m_{CV}$ may alternatively be written:
$$m_{CV}(X,d,T) = \sup_{\{K_j\}} \limsup_{j \to \infty} \left[ \left(\frac{3\|T\|(K_j)}{4\pi }\right)^{1/3} - \capac(K_j) \right].$$

Now consider the inner expression $\left(\frac{3\|T\|(K)}{4\pi }\right)^{1/3} - \capac(K)$ as a functional on compact sets $K$. To have any hope of showing $m_{CV}$ is lower semicontinuous under pointed $\VF$ convergence, it seems necessary to know that ``volume radius minus capacity'' itself is lower semicontinuous. Since volume is by definition continuous in $\VF$, this amounts to the statement that capacity is \emph{upper} semicontinuous. We demonstrated this in Theorem \ref{thm_sublevel} for example. In other words, the results of this paper are supportive of $m_{CV}$ itself being lower semicontinuous under pointed $\VF$-convergence, though a full proof of this is more subtle, requiring, for example, an unproven analog of the ADM mass estimate \cite[Theorem 17]{JLC0}, but for the capacity-volume mass in place of the isoperimetric mass.

\section*{Appendix A: capacity under pointed $C^0$ Cheeger--Gromov convergence}
In this brief appendix we verify the claim stated in the introduction that upper semicontinuity holds for the capacity in Riemannian manifolds converging in the pointed $C^0$ Cheeger--Gromov sense. A precise statement of this fact requires a much simpler analog of the ``corresponding regions'' $K_i$ used in Section \ref{sec_main}:

\begin{prop}
Let $(M_i, g_i, p_i)$ be a sequence of complete pointed $C^0$ Riemannian $n$-manifolds, $n \geq 3$, converging in the pointed $C^0$ Cheeger--Gromov sense to a complete pointed  $C^0$ Riemannian $n$-manifold $(N,h,q)$ (we recall the definition in the proof below). Let $K \subseteq N$ be a compact set. Then for $i$ sufficiently large, there exist compact sets $K_i \subseteq N_i$ and diffeomorphisms $\Phi_i$ mapping a neighborhood $\Omega$ of $K$ in $M$ onto a neighborhood of $K_i$ in $M_i$ with $\Phi_i(K)=K_i$ such that $\Phi_i^* g_i$ converges uniformly to $g$ on $\Omega$. Moreover,
$$\limsup_{i \to \infty} \capac_{N_i}(K_i) \leq \capac_N(K).$$
\end{prop}

\begin{proof}
Pointed $C^0$ Cheeger--Gromov convergence means that for every $r>0$ there exists a domain $\Omega \supseteq B(q,r)$ in $(N,h)$ and, for $i$ sufficiently large, smooth embeddings $\Phi_i: \Omega \to M_i$ such that $\Phi_i(\Omega)$ contains the open ball of radius $r$ about $p_i$ in $(M_i,g_i)$, and that the metrics $\Phi_i^* g_i$ converge uniformly to $h$ on $\Omega$.

Given $\epsilon > 0$, let $\phi$ be a valid test function for $\capac_N(K)$, i.e., $\phi$ is Lipschitz, with compact support in $N$, and $\phi \equiv 1$ on $K$. By the definition of capacity, we may assume
$$\frac{1}{(n-2) \omega_{n-1}} \int_N |\nabla \phi|^2_h dV_h \leq \capac_N(K) + \epsilon,$$
where, in this proof, we place subscripts on the gradient norm and volume form to indicate the Riemannian metric being used.  Without loss of generality, we assume $\phi$ is smooth.

Choose $r>0$ sufficiently large so that $B(q,r)$ contains the support of $\phi$ (which certainly contains $K$). As in the definition of pointed Cheeger--Gromov convergence, take an appropriate domain $\Omega$ and appropriate smooth embeddings $\Phi_i: \Omega \to M_i$ (for $i$ sufficiently large). Let $\phi_i : M_i \to \R$ be defined by $\phi \circ \Phi_i^{-1}$ on $\Phi_i(\Omega)$, extended to $M_i$ by zero. It is easy to see $\phi_i$ has compact support in $\Phi_i(\Omega)$ and is smooth.

Since $\Omega \supset K$, we may define $K_i = \Phi_i(K)$. Note that $\phi_i$ is identically 1 on $K_i$, so that $\phi_i$ is a valid test function for the capacity of $K_i$.
Now
\begin{align*}
\capac_{M_i}(K_i) &\leq \frac{1}{(n-2)\omega_{n-1}} \int_{N_i} |\nabla \phi_i|^2_{g_i} dV_{g_i}\\
&= \frac{1}{(n-2)\omega_{n-1}} \int_{\Phi_i(\Omega)} |\nabla \phi_i|^2_{g_i} dV_{g_i}\\
&= \frac{1}{(n-2)\omega_{n-1}} \int_{\Omega} |\nabla \phi|^2_{\Phi_i^*g_i} dV_{\Phi_i^* g_i}.
\end{align*}
Taking the $\limsup$ and using the fact that $\Phi_i^* g_i$ converges uniformly to $h$ on $\Omega$, we obtain:
\begin{align*}
\limsup_{i \to \infty}\capac_{M_i}(K_i)
&\leq \frac{1}{(n-2)\omega_{n-1}} \int_{\Omega} |\nabla \phi|^2_{h} dV_{h}\\
&= \frac{1}{(n-2)\omega_{n-1}} \int_{M} |\nabla \phi|^2_{h} dV_{h}\\
&\leq \capac_N(K) + \epsilon.
\end{align*}
Since $\epsilon > 0$ was arbitrary, the claim follows.
\end{proof}

\section*{Appendix B: tangential differential, Dirichlet energy, Sobolev spaces, and capacity}
In this section we first review the definition of  Dirichlet energy for a Lipschitz function defined on the canonical set of a current as was done by Portegies \cite{Por}, which we used in the definition of capacity. This will require the concepts of metric and $w^*$-differentials, approximate tangent spaces, and the tangential differential. After that, we relate the latter to the minimal relaxed gradient and briefly discuss several notions of Sobolev spaces on metric spaces. We conclude with a comparison of the definition of capacity we employ in this paper and other definitions appearing in the literature.

A \emph{$w^*$-separable Banach space} $Z$ is by definition a dual space $Z = G^*$, of a separable Banach space $G$, and hence $Z$ is Banach with norm $\| \cdot \|$. Let $\{g_j\}_{j=0}^\infty$ be a countable dense subset in the unit ball in $G$. The function $d_w: Z \times Z \to \R$ given by  
\begin{equation*}\label{eq-distance-d_w}
d_w(x,y) := \sum_{j=0}^\infty 2^{-j} | \langle x-y, g_j \rangle|, 
\quad \text{for } x,y \in Z,
\end{equation*}
is a distance. Note that any such $d_w$ induces the $w^*$-topology on bounded subsets of $Z$, and $(Z,d_w)$ is a separable space \cite[Section~2]{AK_rect}.
One of the main examples of a $w^*$-separable Banach space is the space $\ell^\infty= (\ell^1)^*$. 

\begin{definition}[{\cite[Definitions 3.1 and 3.4]{AK_rect}}]
Let $Z$ be a metric space and $g:\R^n \to Z$ a function. 
\begin{itemize}
\item We say that $g$ is \emph{metrically differentiable} at $x \in \R^n$ if there is a seminorm $md_xg : \R^n \to \R$ such that
\begin{equation*}
d(g(y), g(x)) - md_x g (y - x) = o(|y - x|),  \quad  y \to x.
\end{equation*}
 We call $md_x g$ the \emph{metric differential} of $g$ at $x$.
 \item If $Z$ is a $w^*$-separable Banach space, we say that $g$ is \emph{$w^*$-differentiable} at $x\in\R^n$ if there is a linear map $wd_xg:\R^n \to Z$ such that
\begin{equation*}
\lim_{y\to x} \frac{g(y) - g(x) - wd_xg (y-x)}{|y-x|} = 0,
\end{equation*}
where the limit is understood in the $w^*$-sense. 
The map $wd_x g$ is called the \emph{$w^*$-differential} of $g$ at $x$.
\end{itemize}
 \end{definition}

For Lipschitz maps the following is known. 

\begin{thm}[{\cite[Theorems 3.2 and  3.5]{AK_rect}}]
If $Z$ is a metric space, then any Lipschitz function $g: \R^n \to Z$ 
is metrically differentiable  $\lebmeas^n$-a.e.
If additionally, $Z$ is a $w^*$-separable Banach space, then $g$
is also $w^*$-differentiable $\lebmeas^n$-a.e, and the metric and weak differential satisfy 
\begin{equation*}
md_xg(v) = \|wd_xg(v)\|, \qquad \text{ for all } v \in \R^n \text{ and }
\lebmeas^n\text{-}a.e. \, x \in \R^n.
\end{equation*}
\end{thm}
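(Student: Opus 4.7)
The plan is to reduce all three statements to the classical Rademacher theorem for real-valued Lipschitz maps on $\R^n$ applied to carefully chosen auxiliary functions, and then recover the differentials by passing from a countable dense set to all of $\R^n$ via the uniform Lipschitz bound on $g$.

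For the first assertion (metric differentiability), I would fix a countable dense subset $Q \subset \R^n$. For each $v \in Q$, the function $t \mapsto d(g(x + tv), g(x))$ is Lipschitz on $\R$ with constant at most $|v|\Lip(g)$, so by a one-dimensional Rademacher argument combined with Fubini the metric speed $s_v(x) := \lim_{t \to 0^+} d(g(x+tv), g(x))/|t|$ exists at $\lebmeas^n$-a.e. $x$. Intersecting the corresponding full-measure sets over the countable $Q$ yields a full-measure set $E$ on which every $s_v(x)$ exists. Subadditivity and homogeneity (in $v$) inherited from $d$ imply that $v \mapsto s_v(x)$ satisfies the seminorm inequalities on rational vectors, and the uniform bound $s_v(x) \leq \Lip(g)|v|$ permits continuous extension to a seminorm $md_xg$ on all of $\R^n$. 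A standard diagonal/approximation argument then promotes the a.e. directional limit to the full $o(|y-x|)$-expansion required in the definition, uniformly in direction.

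For the second assertion, let $\{g_j\}_{j=0}^\infty$ be the countable dense subset of the unit ball in $G$ used in the definition of $d_w$. Each real-valued function $f_j(x) := \langle g(x), g_j\rangle$ is Lipschitz on $\R^n$ with constant at most $\Lip(g)$, hence differentiable at $\lebmeas^n$-a.e. $x$; let $E'$ be the full-measure intersection. At any $x \in E'$ I would prescribe a linear map $L_x : \R^n \to Z$ by declaring $\langle L_x(v), g_j\rangle := (\nabla f_j(x)) \cdot v$ and extending by density using the uniform bound $|(\nabla f_j(x)) \cdot v| \leq \Lip(g)|v|$. The difference quotients $(g(x+tv)-g(x))/t$ are norm-bounded in $Z$ by $\Lip(g)|v|$ and converge for each fixed $g_j$ to $\langle L_x(v), g_j\rangle$; since $d_w$ metrizes the $w^*$-topology on bounded subsets, they $w^*$-converge to $L_x(v)$, which we define to be $wd_xg(v)$.

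The identification $md_xg(v) = \|wd_xg(v)\|$ is where I expect the main obstacle. Work on $E \cap E'$. The inequality $\|wd_xg(v)\| \leq md_xg(v)$ follows from the $w^*$-lower semicontinuity of the norm applied to the convergent difference quotients. The reverse inequality cannot be obtained from semicontinuity alone, since the $w^*$-limit can drop norm. The remedy is an $\varepsilon$-approximation: given $\varepsilon > 0$, choose $j_0$ with $\langle wd_xg(v), g_{j_0}\rangle > \|wd_xg(v)\| - \varepsilon$; then for $t$ sufficiently small the pairing $\langle (g(x+tv)-g(x))/t,\, g_{j_0}\rangle$ lies within $\varepsilon$ of $\langle wd_xg(v), g_{j_0}\rangle$, and since $\|g_{j_0}\| \leq 1$ this forces $\|(g(x+tv)-g(x))/t\| \geq \|wd_xg(v)\| - 2\varepsilon$. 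Passing to the $\liminf$ in $t$, then sending $\varepsilon \to 0$, yields $md_xg(v) \geq \|wd_xg(v)\|$ for $v \in Q$; density of $Q$ combined with the uniform Lipschitz bound extends the identity to all $v \in \R^n$ and closes the argument.
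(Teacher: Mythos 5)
This theorem is quoted from Ambrosio--Kirchheim \cite{AK_rect} without proof; the paper treats it as a known external result, so there is no internal argument to compare against. Judged on its own terms, your proposal has two substantive gaps.

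For the metric-differentiability part, the reduction to one-dimensional metric speeds along lines via Fubini, followed by intersection over countably many rational directions, does give you a directional limit $s_v(x)$ for $v\in Q$ at a.e.\ $x$. But the two steps you compress into a sentence -- that $v\mapsto s_v(x)$ is subadditive, and that the a.e.\ directional limits upgrade to the full $o(|y-x|)$ expansion uniformly over directions -- are precisely the content of Kirchheim's theorem. Subadditivity is not immediate from the triangle inequality for $d$: estimating $d(g(x+t(v+w)),g(x))$ via $g(x+tv)$ produces a term $d(g(x+t(v+w)),g(x+tv))$ whose asymptotics as $t\to 0$ involve the directional derivative at the \emph{moving} base point $x+tv$, not at $x$, so you need a continuity-in-$x$ statement that is itself nontrivial. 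And promoting a.e.\ directional differentiability (on a countable dense set of directions) to full differentiability is exactly the hard step of Rademacher's theorem; invoking a ``standard diagonal/approximation argument'' begs the question. The standard route (and the one taken in \cite{AK_rect}) instead applies Rademacher to the countable family of real-valued Lipschitz maps $x\mapsto d(g(x),z_j)$ for a dense sequence $z_j$ in the image, and works with their classical gradients, which sidesteps these difficulties.

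For the identity $md_xg(v)=\|wd_xg(v)\|$, your two arguments in fact establish the \emph{same} inequality. The $w^*$-lower-semicontinuity of the norm applied to the difference quotients gives $\|wd_xg(v)\|\le md_xg(v)$. Your ``remedy'' for the other direction -- choosing $g_{j_0}$ with $\langle wd_xg(v),g_{j_0}\rangle>\|wd_xg(v)\|-\varepsilon$ and bounding $\|(g(x+tv)-g(x))/t\|\ge |\langle (g(x+tv)-g(x))/t,g_{j_0}\rangle|$ -- yields $\liminf_{t\to 0}\|(g(x+tv)-g(x))/t\|\ge\|wd_xg(v)\|-2\varepsilon$, which after $\varepsilon\to 0$ is again $md_xg(v)\ge\|wd_xg(v)\|$, not its reverse. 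The genuinely hard inequality is $md_xg(v)\le\|wd_xg(v)\|$, i.e.\ that the difference quotients do not carry excess norm that evaporates in the $w^*$-limit; this cannot be reached by pairing with any single functional $g_{j_0}$, since $|\langle z,g_{j_0}\rangle|\le\|z\|$ only gives a \emph{lower} bound on $\|z\|$. Proving this direction requires a real argument linking the metric differential to the $w^*$-differential, for instance by revisiting the gradients of the auxiliary functions $d(g(\cdot),z_j)$ and a measure-theoretic localization, as in the proof of \cite[Theorem 3.5]{AK_rect}; as written, your proposal never addresses it.
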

\medskip 

A subset $S$ of a metric space $Z$ is \emph{countably $\cH^n$-rectifiable} if there exist Lipschitz functions $g_j: A_j \subset \R^n \to Z$, $j \in \mathbb N$, defined on Borel sets $A_j$ such that 
\begin{equation*}
\cH^n \left( S \backslash \bigcup_{j=1}^\infty g_j(A_j) \right) = 0.
\end{equation*}
If $Z$ is a $w^*$-separable Banach space, the \emph{approximate tangent space} to $S \subseteq Z$ at a point $x$ is defined as 
\begin{equation*}
\Tan(S,x) = wd_y g_j(\R^n),
\end{equation*} 
whenever  $y = g_j^{-1}(x)$ and $g_j$ is metrically and $w^*$-differentiable at $y$, with $J_n(wd_y g_j) > 0$, where for any linear function $L: V \to W$ between two Banach spaces, with $n=\dim V$, 
$$J_n(L)= \frac{\omega_n} {\cH^n\{v \in V\, :\, ||L(v)|| \leq 1\}}$$
denotes the $n$-Jacobian of $L$. By \cite{AK_rect}, $\Tan(S,x)$ is well defined for $\cH^n$-almost all $x \in S$.  A finite Borel measure $\mu$ is called \emph{$n$-rectifiable} if $\mu = \zeta \cH^n \llcorner S$ for a countably $\cH^n$-rectifiable set $S$ and a Borel function $\zeta:S \to (0,\infty)$.

The next theorem shows the existence of tangential differentials of Lipschitz functions on rectifiable sets.

\begin{thm}[{\cite[Theorem 8.1]{AK_rect}}]
Let $Z$ and $Z'$ be two $w^*$-separable Banach spaces, $S \subset Z$ an $\cH^n$-countably rectifiable subset and $f: Z \to Z'$ a Lipschitz function. Let $\zeta:S \to (0,\infty)$
be an $ \cH^n $-integrable function and denote by $\mu = \zeta \cH^n \llcorner S$ the corresponding $n$-rectifiable measure.

Then for $\cH^n$-almost every $x\in S$, there exist a Borel set $S^x \subset S$ such that
the upper $n$-dimensional density of 
$\mu \llcorner S^x$ equals zero, 
\begin{equation*}
  \Theta_n^*( \mu \llcorner S^x, x) = 0,  
\end{equation*}
and a linear and $w^*$-continuous map $L: Z \to Z'$ so that
\begin{equation*}
\lim_{y \in S \backslash S^x \to x} \frac{d_w(f(y),f(x) + L(y-x))}{|y - x|} = 0.
\end{equation*}
$\Tan(S,x)$ exists and $L$ is uniquely determined on $\Tan(S,x)$ and its restriction to $\Tan(S,x)$
is called the tangential differential to $S$ at $x$ and is denoted by
\begin{equation*}
d_x^S f: \Tan(S,x) \to Z'.
\end{equation*}
Furthermore, the tangential differential is characterized by the property that for any Lipschitz map $g: A \subset \R^n \to S$,
\begin{equation*}
wd_y(f \circ g) = d_{g(y)}^S f \circ wd_y g, \qquad \text{for } \mathcal{L}^n\text{-a.e. } y \in A.
\end{equation*}
\end{thm}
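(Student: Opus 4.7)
My plan is to reduce to a single rectifiable chart, lift the problem to $\R^n$ via the $w^*$-differentiability of Lipschitz maps into $w^*$-separable Banach spaces, and then extend the resulting differential to all of $Z$. By countable $\cH^n$-rectifiability and a standard disjointification, write $S = S_0 \sqcup \bigsqcup_{j \in \N} g_j(A_j)$ with $\cH^n(S_0) = 0$, each $g_j : A_j \subset \R^n \to Z$ Lipschitz and injective on a Borel set $A_j$, and the images pairwise disjoint. Apply the Ambrosio--Kirchheim $w^*$-differentiability theorem to both $g_j$ and $f \circ g_j$ to produce a full $\lebmeas^n$-measure Borel subset $A_j' \subseteq A_j$ on which both maps are metrically and $w^*$-differentiable. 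Shrink $A_j'$ further so that every point of $A_j'$ is a Lebesgue density point of $A_j'$ and satisfies $J_n(wd_y g_j) > 0$; the locus where $J_n(wd_y g_j) = 0$ contributes zero $\cH^n$-measure to $S$ by the area formula and can be absorbed into $S_0$.

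Fix a generic $x = g_j(y)$ with $y \in A_j'$. The approximate tangent space $\Tan(S,x)$ equals $wd_y g_j(\R^n)$, a finite-dimensional subspace of $Z$ on which $wd_y g_j$ is an isomorphism onto its image. Define
\[
L_0\bigl(wd_y g_j(v)\bigr) := wd_y(f \circ g_j)(v), \qquad v \in \R^n,
\]
which is well-defined by injectivity of $wd_y g_j$. Because $\Tan(S,x)$ is finite-dimensional, pick coordinates on $\Tan(S,x)$ and use Hahn--Banach together with $w^*$-separability of $Z$ to obtain a $w^*$-continuous linear projection $\pi : Z \to \Tan(S,x)$; set $L := L_0 \circ \pi : Z \to Z'$. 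Then $L$ is linear, $w^*$-continuous, bounded by a multiple of $\Lip(f)$, and extends $L_0$. Uniqueness of $L$ on $\Tan(S,x)$ is automatic from the defining limit: any two candidates must agree on each tangent vector by a difference-quotient argument run along a sequence in $S$ coming from the chart.

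The crux, and the main obstacle, is producing a set $S^x$ with $\Theta_n^*(\mu \llcorner S^x, x) = 0$ along which the linear approximation by $L$ fails. Define
\[
S^x := (S \setminus g_j(A_j')) \;\cup\; g_j(A_j' \setminus B_y),
\]
where $B_y \subseteq A_j'$ is chosen via Egorov's theorem so that the first-order $w^*$-Taylor remainders of $g_j$ and of $f \circ g_j$ at $y$ vanish \emph{uniformly} on $B_y$ as $o(|\cdot - y|)$. Two ingredients will yield $\Theta_n^*(\mu \llcorner S^x, x) = 0$ at $\mu$-a.e.\ $x$: (i) at $\mu$-a.e.\ $x$ the complement $S \setminus g_j(A_j')$ has zero upper $n$-density in $\mu$ by the density theorems for rectifiable measures and the pairwise disjointness of the charts; (ii) Lebesgue density of $A_j'$ at $y$ combined with the area formula for $g_j$ shows that $g_j(A_j' \setminus B_y)$ has zero upper $n$-density at $x$ provided $\lebmeas^n(A_j' \setminus B_y)$ is small, so a diagonal argument against a countable exhaustion $B_y^{(k)} \nearrow A_j'$ furnished by Egorov completes the control.

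The limit in the theorem then follows: for $y \in S \setminus S^x$ approaching $x$ we may write $y = g_j(y')$ with $y' \in B_y$, and the uniform first-order expansions give
\[
d_w\bigl(f(y) - f(x),\, L(y-x)\bigr) = o(|y' - y|);
\]
non-degeneracy of the metric differential $md_y g_j$ (guaranteed by $J_n(wd_y g_j) > 0$) yields $|y' - y| \lesssim |y - x|$, converting the estimate to $o(|y - x|)$. Uniqueness of $L$ on $\Tan(S,x)$ has already been noted, and the chain-rule characterization in the last display of the theorem is obtained by rerunning the construction with an arbitrary Lipschitz $g : A \subset \R^n \to S$ in place of $g_j$: the candidate $L$ built from $g$ must agree with $d_x^S f$ on $\Tan(S,g(y))$ by uniqueness, which is precisely the relation $wd_y(f \circ g) = d_{g(y)}^S f \circ wd_y g$.
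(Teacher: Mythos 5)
This theorem is not proved in the paper at all: it is recalled verbatim from Ambrosio--Kirchheim's ``Rectifiable sets in metric and Banach spaces'' \cite{AK_rect}, hence there is no ``paper's own proof'' to compare against. Judged on its own merits, your sketch does follow the broad architecture of the Ambrosio--Kirchheim argument---decompose $S$ into disjoint Lipschitz charts, invoke the a.e.\ $w^*$-differentiability of Lipschitz maps into $w^*$-separable Banach spaces applied to $g_j$ and $f\circ g_j$, transport the differential of $f\circ g_j$ through the (invertible) differential of $g_j$ to obtain $L$ on $\Tan(S,x)$, and extend---but two steps need repair.

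First, the central Egorov step is unnecessary and, as written, conceptually misplaced. Once the Lipschitz extension $\tilde g_j:\R^n\to Z$ is $w^*$-differentiable at the fixed density point $y\in A_j'$, the remainder
\begin{equation*}
\frac{g_j(y')-g_j(y)-wd_y g_j(y'-y)}{|y'-y|}
\end{equation*}
tends to $0$ in the $w^*$-sense (equivalently, in $d_w$, since it is bounded) for \emph{all} $y'\in A_j$ with $y'\to y$, not merely approximately. There is no family of functions converging pointwise that needs to be made uniform via Egorov; differentiability at $y$ is already the uniform $o(|y'-y|)$ estimate you want. Consequently you may take simply $S^x=S\setminus g_j(A_j')$, whose vanishing upper $n$-density at $\mu$-a.e.\ $x\in g_j(A_j')$ follows from the density theorem for rectifiable measures and the pairwise disjointness of the charts. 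The exhaustion $B_y^{(k)}\nearrow A_j'$ and diagonal argument can be deleted; as sketched, that step would also not quite yield a single fixed Borel set $S^x$ with $\Theta_n^*(\mu\llcorner S^x,x)=0$ unless much more care were taken.

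Second, ``Hahn--Banach together with $w^*$-separability'' does not by itself give a $w^*$-continuous projection $\pi:Z\to\Tan(S,x)$: Hahn--Banach produces bounded functionals in $Z^*=G^{**}$, which need not be $w^*$-continuous. What you actually need is the elementary dual-space fact that a finite-dimensional subspace $V\subset Z=G^*$ admits a dual basis \emph{drawn from $G$}. Pick a basis $v_1,\dots,v_k$ of $V$; the restriction map $G\to V^*$, $g\mapsto(\langle v_1,g\rangle,\dots,\langle v_k,g\rangle)$, is surjective because any $v\in V$ with $\langle v,g\rangle=0$ for all $g\in G$ vanishes as an element of $G^*$. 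Choosing preimages $g_1,\dots,g_k\in G$ of the dual basis yields the $w^*$-continuous projection $\pi(z)=\sum_i\langle z,g_i\rangle v_i$, after which $L:=L_0\circ\pi$ works as you describe. Finally, your derivation of the chain rule should also cover Lipschitz $g:A\to S$ for which $J_n(wd_y g)=0$ (a degenerate chart): there the stated identity is still claimed, but uniqueness-on-$\Tan$ alone does not immediately give it, and a short direct argument (composing the approximate differential along $S$ with $wd_y g$) is needed.
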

Note that if $d_x^S f$ is defined, then its dual norm satisfies
\begin{equation}
|d_x^S f| \leq \Lip(f).
\end{equation}

For an arbitrary countably $\cH^n$-rectifiable metric space $S$, the fact that we can always embed it into a $w^*$-separable dual space allows us to define approximate tangent spaces to $S$ in the following way.
\begin{definition}[{\cite[Definition~5.9]{AK_rect}}]
Let $S$ be a separable, countably $\cH^n$-rectifiable metric space, let $Z$ be a $w^*$-separable Banach space and let $\iota : S \to Z$ be a distance-preserving map. Then for $\cH^n$-almost every $x \in S$ the \emph{approximate tangent space} of $S$ at $x$ is defined as 
\begin{equation}
\Tan(S,x) : = \Tan(\iota(S),\iota(x)).
\end{equation}
\end{definition}

Even though the definition seems to depend on the choice of a distance-preserving map, the approximate tangent space $\Tan(S,x)$ is actually uniquely determined $\cH^n$-a.e.~up to linear isometries \cite{AK_rect}. 

\begin{definition}[{\cite[Equation~(3.1)]{Por}}]
Let $X$ be a separable metric space, let $S \subset X$ be an $\cH^n$-countably rectifiable subset, let $\zeta : S \to (0, \infty)$ be an $\cH^n$-integrable function and let $\mu = \zeta \cH^n \llcorner S$ be the corresponding $n$-rectifiable measure. Let $f : S \to \mathbb{R}$ be a Lipschitz function.  Let $Z$ be a $w^*$-separable Banach space, and let $\iota : S \to Z$ be a distance-preserving map. Then we define
\begin{equation*}
|d_x^S f| := |d_{\iota(x)}^{\iota(S)} (f\circ \iota^{-1})|,
\end{equation*}
for $\mu$-a.e.~$x \in S$, where the right hand side denotes the dual norm of $d_{\iota(x)}^{\iota(S)} (f\circ \iota^{-1})$.
\end{definition}
This quantity is also well defined, independent of the isometric embedding \cite{Por}.

\begin{definition}[Definition 3.8 of \cite{Por}]\label{de:NormEn}
Let $X$ be a complete metric space, and let $T \in \In(X)$. Let $S = \set(T)$, and let $f: S \to \R$ be a Lipschitz function. Then the \emph{(Dirichlet) energy} of $f$ is given by
\begin{equation*}
E_T(f) := \int_X |d_x^S f |^2 \, d\|T\|(x).
\end{equation*}
\end{definition}

The energy of $f$ is invariant under distance-preserving maps \cite[Proposition~3.1]{Por}, and for any compact oriented Riemannian manifold $(M,g)$ we have that the energy is given by $\int_M |\nabla f|^2dV$, where the gradient and volume measure are taken with respect to $g$. 

We will next mention some results that essentially state that $|d^S_x f|$ and its associated Dirichlet energy correspond to the minimal relaxed gradient and the Cheeger energy as introduced in \cite{AGS}. One could therefore have used the latter definitions as a starting point, but the characterization of the minimal relaxed gradient as the norm of the tangential derivative is important in the main text.

Let $X$ be a complete metric space, $T \in \In(X)$ and $S=\set(T)$. 
The space $L^2(\|T\|)$ is the Hilbert space of equivalence classes of functions on $X$ that are square-integrable with respect to $\|T\|$ with inner product
\begin{equation*}
\langle f, g\rangle_{L^2(\|T\|)} := \int_X f g \,  d\|T\|.
\end{equation*}
We define a Sobolev space $W^{1,2}(\|T\|)$ as follows.

\begin{definition}[{\cite[Section~4]{Por}}]
For an integral current $T$ on a complete metric space $X$, the space $W^{1,2}(\|T\|)$ is defined as the completion of the set of bounded Lipschitz functions on $\spt T$ with respect to the norm $\|.\|_{W^{1,2}}$ given by
\begin{equation*}
\begin{split}
\|f \|_{W^{1,2}}^2 
& = \int_X f^2 d\|T\| + \int_X |d^S_x f|^2 d\|T\|(x). 
\end{split}
\end{equation*}
\end{definition}
By this definition, every $f$ in $W^{1,2}(\|T\|)$  can be represented by a Cauchy sequence $f_i$ of bounded Lipschitz functions.

\begin{definition}[{\cite[Equation~(4.2)~and~Definition 4.1]{Por}}]
We denote by $\mathcal{T}^*_2(\|T\|)$  where $S$ denotes $\set(T)$, the Banach space of equivalence classes of covector fields endowed with the norm
\begin{equation*}
\| \psi \|_{\mathcal{T}^*_2(\|T\|)}^2 := \int_X | \psi(x) |^2_{[\Tan(S,x)]^*} d\|T\|(x),
\end{equation*}
If $(f_i)$ is a sequence of bounded Lipschitz functions representing $f \in W^{1,2}(\|T \|)$, then the limit of the sequence $(d^S_xf_i)$ in $\mathcal{T}^*_2(\|T\|)$
is denoted by $d_xf$.
\end{definition}

\begin{thm}[Theorem 5.2 of \cite{Por}]
\label{thm_df}
Let $X$ be a complete metric space, $T \in \In(X)$ and 
$f \in L^2(\|T\|)$. Then $f$ has a relaxed gradient in the sense of  \cite[Definition 4.2]{AGS} if and only if $f \in W^{1,2}(||T||)$. Moreover, the minimal relaxed gradient equals $|d_x f|$ for $||T||$-a.e. $x \in X$.
\end{thm}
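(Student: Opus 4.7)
The plan is to prove both implications via a common core lemma: for any bounded Lipschitz function $g$ on $\spt(T)$, the pointwise norm $|d^S_x g|$ of the tangential differential coincides with the slope $|\nabla g|(x) := \limsup_{y \to x} |g(y) - g(x)|/d(y,x)$ at $\|T\|$-almost every $x$, where $S = \set(T)$. This identification rests on the $\mathcal{H}^n$-rectifiable structure of $S$ built into the definition of an integral current: at $\|T\|$-a.e.\ $x$, the approximate tangent space $\Tan(S,x)$ exists as an $n$-dimensional subspace, and by the chain rule along Lipschitz parametrizations, the slope of $g$ in tangential directions is controlled by $|d^S_x g|$, while testing along directions in $\Tan(S,x)$ realizes the bound.

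For the implication $W^{1,2}(\|T\|) \Rightarrow$ relaxed gradient exists, I would start with a Cauchy sequence of bounded Lipschitz functions $f_i$ on $\spt(T)$ with $f_i \to f$ in $L^2(\|T\|)$ and $d^S_x f_i \to d_x f$ in $\mathcal{T}^*_2(\|T\|)$. By the core lemma, $|\nabla f_i|(x) = |d^S_x f_i|$ $\|T\|$-a.e., so $|\nabla f_i|$ converges in $L^2(\|T\|)$ to $|d_x f|$. Since the slope is an upper gradient of a Lipschitz function, the stability of relaxed gradients under $L^2$-convergence of the function together with weak $L^2$-convergence of its upper gradients, as encoded in \cite[Definition 4.2]{AGS}, yields that $|d_x f|$ is a relaxed gradient; hence $|\nabla f|_* \leq |d_x f|$ $\|T\|$-a.e.

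For the reverse implication, I would use the definition of relaxed gradient to produce bounded Lipschitz functions $g_i \to f$ in $L^2$ with $|\nabla g_i|$ converging weakly in $L^2$ to $|\nabla f|_*$. The core lemma gives $|d^S_x g_i| = |\nabla g_i|(x)$ $\|T\|$-a.e., so $\{d^S_x g_i\}$ is bounded in $\mathcal{T}^*_2(\|T\|)$. Applying Mazur's lemma produces convex combinations $\tilde g_i$ of the $g_i$ (still bounded Lipschitz with $\tilde g_i \to f$ in $L^2$) whose slopes converge strongly in $L^2$ to $|\nabla f|_*$; exploiting the pointwise uniform convexity of the dual norm on the cotangent fibers, this strong convergence of norms lifts to strong Cauchy convergence of $d^S_x \tilde g_i$ in $\mathcal{T}^*_2(\|T\|)$, placing $f$ in $W^{1,2}(\|T\|)$ with $|d_x f| \leq |\nabla f|_*$ $\|T\|$-a.e.

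The main obstacle is the core lemma identifying $|d^S_x g|$ with the slope $|\nabla g|(x)$. The upper bound $|d^S_x g| \leq |\nabla g|(x)$ is relatively direct, following from the fact that the tangential differential is a $w^*$-linear approximation of $g$ off a small set of zero $n$-dimensional density at $x$. The matching lower bound is more delicate: at a $\|T\|$-generic $x$, one must produce sequences $y_n \to x$ lying in $S \setminus S^x$ (in the notation of Theorem 8.1 of \cite{AK_rect}) whose directions approach a maximizer of $|d^S_x g|$ on the unit sphere of $\Tan(S,x)$, so that $|g(y_n) - g(x)|/d(y_n,x)$ tends to $|d^S_x g|$. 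Such sequences exist because $\Theta_n^*(\|T\| \llcorner S^x, x) = 0$ and, at a density point of $\|T\|$ on $S$, the approximate tangent space is genuinely $n$-dimensional in the sense of carrying mass. Once this lemma is established, the two inequalities combine to give $|d_x f| = |\nabla f|_*$ $\|T\|$-a.e., completing both the characterization of $W^{1,2}(\|T\|)$ and the identification of the minimal relaxed gradient.
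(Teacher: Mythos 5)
This theorem is cited from Portegies \cite[Theorem 5.2]{Por} and is not proved in the present paper, so there is no in-paper argument to compare against; the proposal must be judged on its own terms.

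There is a genuine gap at the center of your argument: the ``core lemma'' asserting that for a bounded Lipschitz $g$ on $\spt(T)$ the local slope $|\nabla g|(x) = \limsup_{y \to x}|g(y)-g(x)|/d(y,x)$ equals the tangential norm $|d^S_x g|$ for $\|T\|$-a.e.\ $x$ is not established, and it does not follow from the facts you invoke. Theorem~8.1 of \cite{AK_rect} yields the first-order expansion $g(y)\approx g(x)+d^S_xg\,(y-x)$ only for $y \in S \setminus S^x$, where the exceptional set $S^x$ has zero upper $n$-density at $x$. Zero \emph{density} of $S^x$ at $x$ does not prevent $S^x$ from containing a sequence $y_k\to x$, and along such a sequence the difference quotient is controlled only by $\Lip(g)$, which can strictly exceed $|d^S_xg|$. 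The slope is a purely metric quantity while the tangential differential is measure-theoretic, so there is no reason in general rectifiability for the two to agree even up to a $\|T\|$-null set of base points. You flag the lower bound $|\nabla g|(x)\le |d^S_xg|$ as delicate, but the appeal to density points of $\|T\|$ and $n$-dimensionality of the approximate tangent does not close the gap, because the slope sees all nearby points of $S$ including those in $S^x$. Proving that the set of $x$ where this goes wrong is $\|T\|$-null requires the \emph{integral-current} structure (integer multiplicity, closure, blow-up behavior of the current) and not merely rectifiability of $\set(T)$; this is essentially the content of the theorem being claimed and cannot be taken as a preliminary lemma. A second concern is the reverse direction: you pass from strong $L^2$-convergence of the scalar norms $|d^S_x\tilde g_i|$ to Cauchy convergence of $d^S_x\tilde g_i$ in $\mathcal{T}^*_2(\|T\|)$ by invoking ``pointwise uniform convexity of the dual norm on the cotangent fibers.'' The fibers $\Tan(S,x)$ carry the restriction of the ambient Banach norm and need not be uniformly, or even strictly, convex, so this step is also not available without justification.

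The cited proof does not route through a pointwise slope identity. It instead works with the tangential energy $f\mapsto \int |d^S_xf|^2\,d\|T\|$ directly, proving $L^2$-lower semicontinuity of this functional on Lipschitz functions and a complementary density-in-energy statement, so that the relaxed Cheeger energy and the tangential energy agree and the minimal relaxed gradient is then obtained as the $L^2$-limit of the tangential norms. The architecture is the same as in \cite{AGS} (lower semicontinuity in one direction, density in the other), but the inputs are the rectifiable and integral-current structure of $(\set(T),\|T\|)$, not a pointwise identification of the slope with $|d^S_xg|$.
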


As a consequence, the space $W^{1,2}(\|T\|)$ agrees with the Sobolev space $W^{1,2}(X, d, \|T\|)$ as introduced in \cite[Remark~4.6]{AGS}, and the Dirichlet energy in Definition \ref{de:NormEn} corresponds up to a factor $2$ to the Cheeger energy defined in \cite[Theorem~4.5]{AGS}.

We conclude with a discussion of Sobolev spaces and capacity. 
In  \cites{BB, HKST} metric measure spaces $(X,d,m)$, where $(X,d)$ is separable and 
$m$ is a locally finite Borel regular measure on $X$,
are considered and Newtonian
spaces of functions  $N^{1,p}(X,d,m)$, $1 \leq p < \infty$, are defined. Originally defined by Shanmugalingam in her PhD thesis and subsequent paper \cite{Sha}, these are a type of Sobolev space.
Then the \emph{$p$-Sobolev capacity} of a set $E \subset X$ 
is defined as 
\begin{align*}
\capac_p(E)&= \inf\left\{  \int |u|^pdm + \int \rho_u^p  dm \,\right.\\
&\qquad\qquad\left.: \, u \in N^{1,p}(X,d,m), u \geq 1 \text{ on } E \text{ outside a $p$-exceptional set of measure zero}\right\},
    \end{align*}
    where $\rho_u$ denotes the minimal $p$-weak upper gradient of $u$. 
It is also shown that this is equivalent to  \cite[Lemma 7.2.6]{HKST}:
\begin{equation*}
\capac_p(E)= \inf\left\{  \int |u|^pdm + \int \rho_u^p  dm \, : \, u \in N^{1,p}(X,d,m), \, 0 \leq u \leq 1 \text{ and $u=1$ on $E$}\right\}.
    \end{equation*}
    
    Other types of Sobolev spaces on metric spaces have been defined, and Theorem 10.5.1---10.5.3 in \cite{HKST} describe some relations between them.  For $1<p < \infty$, the Cheeger space,
   $W_{Ch}^{1,p}$, and Newtonian space, $N^{1,p}$, are equal (up to representatives) and both norms coincide \cite[Theorem~10.5.1]{HKST}.
    Provided 
$m$ is a doubling measure and $X$ satisfies a $q$-Poincar\'e inequality, $1 \leq q <2$, 
the following Sobolev spaces coincide 
\begin{equation}
M^{1,2}=P^{1,2}=KS^{1,2}=N^{1,2}=W_{Ch}^{1,2},
\end{equation}
and the norms $\|\cdot \|_{M^{1,2}}$, $\| \cdot \|_{N^{1,2}}=\|\cdot\|_{W_{Ch}^{1,2}}$ and $\| \cdot \|_{KS^{1,2}}$ are comparable \cite[Theorem~10.5.3]{HKST}. Here $M^{1,2}$ is the Haj\l asz Sobolev space \cite{Haj}, $P^{1,2}$ is  the Poincar\'e Sobolev space, and $KS^{1,2}$
is the Korevaar--Schoen Sobolev space. 
If $m$ is only a doubling measure then \cite[Theorem~10.5.1]{HKST}
\begin{equation}
    M^{1,2} \subseteq P^{1,2} \subseteq KS^{1,2} \subseteq N^{1,2}=W^{1,2}_{Ch} .
\end{equation}
        
For complete and separable metric measure spaces $(X,d,m)$, in \cite[Theorem 6.2]{AGS} (cf. \cite[Theorem 2.2.28]{GP})
it was shown that $W^{1,2}_{Ch}$ and the Sobolev space $W^{1,2}(X,d,m)$ using the minimal relaxed gradient in the sense of  \cite[Definition 4.2]{AGS} are the same, and their norms coincide. Furthermore, any $f \in W^{1,2}(X,d,m)$ can be approximated by functions in $\Lip(X) \cap L^2(m)$.

The main difference between the  capacity we use in this paper and the 2-Sobolev capacity given in \cite{HKST}, is that in our definition of capacity
we only integrate the gradient term.

For a metric measure space $(X,d,m)$ and $\Omega \subset X$, the $p$-variational capacity
is defined as
\[
\capac_p^{\circ}(E, \Omega) = \inf\left\{  \int \rho_u^p  dm \, : \, u \in N_0^{1,p}(\Omega,d,m), \, 0 \leq u \leq 1 \text{ and $u=1$ on $E$}\right\}.
\]
If $(X,d,m)$ is doubling and admits a Poincar\'e inequality, then certain inequalities hold between the variational capacity and the Sobolev capacity
(see  \cite[Theorem 6.16]{BB}).

\begin{bibdiv}
 \begin{biblist}
\bib{AK_cur}{article}{
   author={Ambrosio, L.},
   author={Kirchheim, B.},
   title={Currents in metric spaces},
   journal={Acta Math.},
   volume={185},
   date={2000},
   number={1},
   pages={1--80}
}

\bib{AK_rect}{article}{
   author={Ambrosio, L.},
   author={Kirchheim, B.},
   title={Rectifiable sets in metric and Banach spaces},
   journal={Math. Ann.},
   volume={318},
   date={2000},
   number={3},
   pages={527--555}
}

\bib{AGS}{article}{
    AUTHOR = {Ambrosio, L.},
    AUTHOR = {Gigli, N.},
    AUTHOR = {Savar\'{e}, G.},
     TITLE = {Calculus and heat flow in metric measure spaces and applications to spaces with {R}icci bounds from below},
   JOURNAL = {Invent. Math.},
    VOLUME = {195},
      YEAR = {2014},
    NUMBER = {2},
     PAGES = {289--391}
}

  \bib{ADM}{article}{
   author={Arnowitt, R.},
   author={Deser, S.},
   author={Misner, C.},
   title={Coordinate invariance and energy expressions in general relativity},
   journal={Phys. Rev. (2)},
   volume={122},
   date={1961},
   pages={997--1006},
}

\bib{BB}{book}{
   author={Bj\"{o}rn, A.},
   author={Bj\"{o}rn, J.},
   title={Nonlinear potential theory on metric spaces},
   series={EMS Tracts in Mathematics},
   volume={17},
   publisher={European Mathematical Society (EMS), Z\"{u}rich},
   date={2011}
}

\bib{CESY}{article}{
   author={Chodosh, O.},
   author={Eichmair, M.},
   author={Shi, Y.},
   author={Yu, H.},
   title={Isoperimetry, scalar curvature, and mass in asymptotically flat Riemannian 3-manifolds},
  journal={Comm. Pure Appl. Math.},
   volume={74},
   date={2021},
   number={4},
   pages={865--905},
 
}

\bib{deGiorgi}{article}{
   author={de Giorgi, E.},
   title={Problema di Plateau generale e funzionali geodetici},
  journal={Atti Sem. Mat. Fis. Univ. Modena},
   volume={43},
   date={1995},
   pages={285--292},
}

\bib{deRham}{book}{
    AUTHOR = {de Rham, G.},
     TITLE = {Vari\'{e}t\'{e}s diff\'{e}rentiables. {F}ormes, courants, formes harmoniques},
    SERIES = {Publ. Inst. Math. Univ. Nancago, III},
 PUBLISHER = {Hermann et Cie, Paris},
      YEAR = {1955}
}
	
\bib{EG}{book}{
   author={Evans, L.},
   author={Gariepy, R.},
   title={Measure theory and fine properties of functions},
   series={Textbooks in Mathematics},
   edition={Revised edition},
   publisher={CRC Press, Boca Raton, FL},
   date={2015}
}

\bib{FeFle}{article}{
    AUTHOR = {Federer, H.},
    AUTHOR = {Fleming, W. H.},
     TITLE = {Normal and integral currents},
   JOURNAL = {Ann. of Math. (2)},
    VOLUME = {72},
      YEAR = {1960},
     PAGES = {458--520}
}

\bib{Fuk}{article}{
   author={Fukaya, K.},
   title={Collapsing of Riemannian manifolds and eigenvalues of Laplace
   operator},
   journal={Invent. Math.},
   volume={87},
   date={1987},
   number={3},
   pages={517--547}
}

\bib{GP}{book}{
   author={Gigli, N.},
   author={Pasqualetto, E.},
   title={Lectures on Nonsmooth Geometry},
    publisher={SISSA Springer Series 2, Cambridge},
   date={2020}
  }

\bib{GT}{article}{
   author={Gol\cprime dshtein, V.},
   author={Troyanov, M.},
   title={Capacities in metric spaces},
   journal={Integral Equations Operator Theory},
   volume={44},
   date={2002},
   number={2},
   pages={212--242}
}

\bib{Haj}{article}{
   author={Haj\l asz, P.},
   title={Sobolev spaces on an arbitrary metric space},
   journal={Potential Anal.},
   volume={5},
   date={1996},
   number={4},
   pages={403--415}
}

\bib{HKST}{book}{
   author={Heinonen, J.},
   author={Koskela, P.},
   author={Shanmugalingam, N.},
   author={Tyson, J.},
   title={Sobolev spaces on metric measure spaces},
   series={New Mathematical Monographs},
   volume={27},
   note={An approach based on upper gradients},
   publisher={Cambridge University Press, Cambridge},
   date={2015}
}

\bib{HLP}{article}{
author={Huang, L.-H.},
author={Lee, D},
author={Perales, R.},
title={Intrinsic flat convergence of points and applications to stability of the positive mass theorem},
journal={Ann. Henri Poincaré},
date={2022},
   volume={23},
   number={7}
}

\bib{Hui1}{article}{
     author={Huisken, G.},
     title={An isoperimetric concept for mass and quasilocal mass},
     journal={Oberwolfach Reports, European Mathematical Society (EMS), Z\"urich},
   date={2006},
   volume={3},
   number={1},
   pages={87--88}
}

\bib{Hui2}{article}{
     author={Huisken, G.},
     title={An isoperimetric concept for the mass in general relativity},
 	 eprint={https://www.ias.edu/video/marston-morse-isoperimetric-concept-mass-general-relativity},
     date={March 2009},
     journal={Accessed, 2021-09-01}
}

\bib{Jau}{article}{
   author={Jauregui, J.},
   title={ADM mass and the capacity-volume deficit at infinity},
   journal={to appear in Comm. Anal. Geom.},
}

\bib{JC2}{article}{
   author={Jauregui, J.},
   title={On the lower semicontinuity of the ADM mass},
   journal={Comm. Anal. Geom.},
   volume={26},
   date={2018},
   number={1},
   pages={85--111}
}

\bib{JLC0}{article}{
   author={Jauregui, J.},
   author={Lee, D.},
   title={Lower semicontinuity of mass under $C^0$ convergence and Huisken's
   isoperimetric mass},
   journal={J. Reine Angew. Math.},
   volume={756},
   date={2019},
   pages={227--257}
}

\bib{JL}{article}{
   author={Jauregui, J.},
   author={Lee, D.},
   title={Lower semicontinuity of ADM mass under intrinsic flat convergence},
   journal={Calc. Var. Partial Differential Equations},
   volume={60},
   date={2021},
   number={5}
}

\bib{KM}{article}{
   author={Kinnunen, J.},
   author={Martio, O.},
   title={The Sobolev capacity on metric spaces},
   journal={Ann. Acad. Sci. Fenn. Math.},
   volume={21},
   date={1996},
   number={2},
   pages={367--382}
}

\bib{KJ}{book}{
    AUTHOR = {Kelley, J.},
     TITLE = {General topology},
    SERIES = {Graduate Texts in Mathematics, No. 27},
    PUBLISHER = {Springer-Verlag, New York-Berlin},
      YEAR = {1975}
}

\bib{LW}{article}{
	author={Lang, U.},
	author={Wenger, S.},
	title={The pointed flat compactness theorem for locally integral
		currents},
	journal={Comm. Anal. Geom.},
	volume={19},
	date={2011},
	number={1},
	pages={159--189}
}

\bib{PS}{book}{
   author={P{\'o}lya, G.},
   author={Szeg{\"o}, G.},
   title={Isoperimetric Inequalities in Mathematical Physics},
   series={Annals of Mathematics Studies, no. 27},
   publisher={Princeton University Press},
   place={Princeton, N. J.},
   date={1951}
}

\bib{Por}{article}{
   author={Portegies, J.},
   title={Semicontinuity of eigenvalues under intrinsic flat convergence},
   journal={Calc. Var. Partial Differential Equations},
   volume={54},
   date={2015},
   number={2},
   pages={1725--1766}
}

\bib{Sha}{article}{
   author={Shanmugalingam, N.},
   title={Newtonian spaces: an extension of Sobolev spaces to metric measure
   spaces},
   journal={Rev. Mat. Iberoamericana},
   volume={16},
   date={2000},
   number={2},
   pages={243--279}
}

\bib{Sor}{article}{
   author={Sormani, C.},
   title={Intrinsic flat Arzela-Ascoli theorems},
   journal={Comm. Anal. Geom.},
   volume={26},
   date={2018},
   number={6},
   pages={1317--1373}
}

\bib{Sor2}{article}{
   author={Sormani, C.},
   title={Scalar curvature and intrinsic flat convergence},
   conference={
      title={Measure theory in non-smooth spaces},
   },
   book={
      series={Partial Differ. Equ. Meas. Theory},
      publisher={De Gruyter Open, Warsaw},
   },
   date={2017},
   pages={288--338}
}

\bib{SW}{article}{
   author={Sormani, C.},
   author={Wenger, S.},
   title={The intrinsic flat distance between Riemannian manifolds and other
   integral current spaces},
   journal={J. Differential Geom.},
   volume={87},
   date={2011},
   number={1},
   pages={117--199}
}

\bib{Tak}{article}{
   author={Takeuchi, S.},
   title={The pointed intrinsic flat distance between locally integral
   current spaces},
   journal={J. Topol. Anal.},
   volume={13},
   date={2021},
   number={3},
   pages={659--671}
}

\bib{We2007}{article}{
    AUTHOR = {Wenger, S.},
     TITLE = {Flat convergence for integral currents in metric spaces},
   JOURNAL = {Calc. Var. Partial Differential Equations},
    VOLUME = {28},
      YEAR = {2007},
    NUMBER = {2},
     PAGES = {139--160}
}

\bib{We2011}{article}{
   author={Wenger, S.},
   title={Compactness for manifolds and integral currents with bounded
   diameter and volume},
   journal={Calc. Var. Partial Differential Equations},
   volume={40},
   date={2011},
   number={3-4},
   pages={423--448}
}

\bib{Wh57}{book}{
    AUTHOR = {Whitney, H.},
     TITLE = {Geometric integration theory},
 PUBLISHER = {Princeton University Press, Princeton, N. J.},
      YEAR = {1957}
}

\end{biblist}
\end{bibdiv}

\end{document}